\begin{document}

\title[Distribution-Free Graph-Based Two-Sample Tests]{A General Asymptotic Framework for Distribution-Free Graph-Based Two-Sample Tests}

\author[B. B. Bhattacharya]{Bhaswar B. Bhattacharya}
\address{Department of Statistics\\ University of Pennsylvania\\ Philadelphia, PA 19104}
\email{bhaswar@wharton.upenn.edu}

\begin{abstract}Testing equality of two multivariate distributions is a classical problem for which many non-parametric tests have been proposed over the years. Most of the popular two-sample tests, which are asymptotically distribution-free, are based either on geometric  graphs constructed using inter-point distances between the observations (multivariate generalizations of the Wald-Wolfowitz's runs test) or on multivariate data-depth (generalizations of the Mann-Whitney rank test). 

This paper introduces a general notion of distribution-free graph-based two-sample tests, and provides a unified framework for analyzing and comparing their asymptotic properties. The asymptotic (Pitman) efficiency of a general graph-based test is derived, which includes tests based on geometric graphs, such as the Friedman-Rafsky test~\cite{fr}, the test based on the $K$-nearest neighbor  graph, the cross-match test \cite{rosenbaum}, the generalized edge-count test \cite{chenfriedman}, as well as tests based on multivariate depth functions (the Liu-Singh rank sum statistic \cite{liu_singh}). The results show how the combinatorial properties of the underlying graph effect the performance of the associated two-sample test, and can be used to validate and decide which tests to use in practice. Applications of the results are illustrated both on synthetic and real datasets. 
\end{abstract}

\subjclass[2010]{62G10, 62G30, 60D05, 60F05, 60C05}
\keywords{Asymptotic efficiency, Distribution-free tests, Minimum spanning tree, Nearest-neighbor graphs, Two-sample problem}

\maketitle

\section{Introduction}
\label{graph2tests}

Let $F$ and $G$ be two continuous distribution functions in $\R^d$. Given independent and identically distributed samples 
\begin{align}\label{2}
\sX_{N_1}=\{X_1, X_2, \ldots, X_{N_1}\} \text{ and } \sY_{N_2}=\{Y_1, Y_2, \ldots, Y_{N_2}\}
\end{align} 
from two unknown distributions $F$ and $G$, respectively, the two-sample problem is to  distinguish the hypotheses 
\begin{equation}\label{test}
H_0: F=G, \quad \text{versus} \quad H_1: F\ne G.
\end{equation}
More precisely, $H_0$ is the collection of all distributions of mutually independent i.i.d. observations with sample size $N_1+N_2$ from a distribution in $\R^d$; and $H_1$ is the collection of all distributions of mutually independent i.i.d. observations with sample size $N_1$ from some distribution $F$  in $\R^d$,  and i.i.d. observations with sample size $N_2$ from some other distribution $G \ne F$  in $\R^d$.

There are many multivariate two-sample testing procedures, ranging from tests for parametric hypotheses such as the Hotelling's $T^2$-test, and the likelihood ratio test, to more general non-parametric procedures \cite{aslan_zech,bfranz,bickel,tsp,twosamplelinear,fr,gretton,hall_tajvidi,henzenn,rousson,rosenbaum,schilling,weiss}. In this paper, we consider multivariate two-sample tests, which are {\it asymptotically distribution-free}, that is, tests for which the asymptotic null distribution do not depend on the underlying (unknown) distribution of the data. As a result, these tests can be directly implemented as an asymptotically level $\alpha$ test, making them practically convenient. 

For univariate data, there are several celebrated nonparametric distribution-free tests such as the the Wald-Wolfowitz runs test \cite{ww} and the Mann-Whitney rank test \cite{mann_whitney} (see the textbook \cite{elnp} for more on these tests). Many multivariate generalizations of these tests, which are asymptotically distribution-free, have been proposed. Most of these tests can be broadly classified into two categories:\vspace{-0.05in}
\begin{itemize}
\item[(1)] {\it Tests based on geometric graphs:} These tests are constructed using the inter-point distances between the observations. This includes the test based on the Euclidean minimal spanning tree by Friedman and Rafsky \cite{fr} (generalization the Wald-Wolfowitz runs test \cite{ww} to higher dimensions) and tests based on nearest neighbor graphs \cite{henzenn,schilling}. This also include Rosenbaum's \cite{rosenbaum} test based minimum non-bipartite matching and the test based on the Hamiltonian path by Biswas et al. \cite{tsp} (both of which are distribution-free in finite samples), and the recent tests of Chen and Friedman \cite{chenfriedman}. Refer to Maa et al. \cite{interpoint} for theoretical motivations for using tests based on inter-point distances.

\item[(2)] {\it Tests based on depth functions}: The Liu-Singh rank sum test \cite{liu_singh} are a class of multivariate two-sample tests that generalize the Mann-Whitney rank test using the notion of data-depth. This include tests based on halfspace depth \cite{tukey} and simplicial depth \cite{liu,liu_datadepth}, among others. For other generalizations of the Mann-Whitney test, refer to the survey by Oja \cite{ojaR} and the references therein. 
 \end{itemize}

In this paper, we provide a general framework of graph-based two-sample tests, which includes all the tests discussed above. We begin with a few definitions: A subset $S\subset \R^d$ is {\it locally finite} if $S\cap C$ is finite, for all compact subsets $C\subset \R^d$.  A locally finite set $S\subset \R^d$ is {\it nice} (with respect to a metric $\rho$ in $\R^d$) if all inter-point distances among the elements of $S$ are distinct. Note that if, for example, $S$ is a set of $N$ i.i.d. points $W_1, W_2, \ldots, W_N$ from some continuous distribution $F$, then the distribution of $W_1-W_2$, and hence $||W_1-W_2||$, where $||\cdot||$ denotes the Euclidean norm, does not have any point mass, and $S$ is nice. For the same reason,  in a set of $N$ i.i.d. points from a continuous distribution ties occur with zero probability. 

A  {\it graph functional} $\sG$ in $\R^d$ defines a graph for all finite subsets of $\R^d$, that is, given  $S\subset \R^d$ finite, $\sG(S)$ is a graph with vertex set $S$. A graph functional is said to be {\it undirected/directed} if the graph $\sG(S)$ is an undirected/directed graph with vertex set $S$. We assume that $\sG(S)$ has no self loops and multiple edges, that is, no edge is repeated more than once in the undirected case, and no edge in the same direction is repeated more than once in the directed case.  The set of edges in the graph $\sG(S)$ will be denoted by $E(\sG(S))$. The cardinality of a finite set  $A$, is denoted by $|A|$.

\begin{defn}\label{graph2} Let $\sX_{N_1}$ and $\sY_{N_2}$ be i.i.d. samples of size $N_1$ and $N_2$ from densities $f$ and $g$, respectively, as in (\ref{2}). The {\it 2-sample test statistic based on the graph functional $\sG$} is defined as 
\begin{align}\label{graph2test}
T(\sG(\sX_{N_1}\cup\sY_{N_2}))=\frac{\sum_{i=1}^{N_1} \sum_{j=1}^{N_2} \pmb 1\{(X_i, Y_j)\in E(\sG(\sX_{N_1}\cup \sY_{N_2}))\}}{|E(\sG(\sX_{N_1}\cup \sY_{N_2}))|}.
\end{align}
\end{defn}

Denote by $N=N_1+N_2$ and $\cZ_N=\sX_{N_1}\cup \sY_{N_2}=\{Z_1, Z_2, \ldots, Z_N\}$ the elements of the pooled sample, with $Z_i$ labelled $c_i=1$ if  $Z_i\in \sX_{N_1}$ and  $c_i=2$ if $z_i\in \sY_{N_2}$. Then~\eqref{graph2test} can be re-written as 
\begin{align}\label{graph2z}
T(\sG(\cZ_N)):=\frac{\sum_{1\leq i\ne j\leq N} \psi(c_i, c_j) \pmb 1\{(Z_i, Z_j)\in E(\sG(\cZ_N))\}}{|E(\sG(\cZ_N))|}, 
\end{align}
where $\psi(c_i, c_j)=\pmb1\{c_i=1, c_j=2\}$.  If $\sG$ is an undirected graph functional, then the statistic \eqref{graph2z} counts the proportion of edges in the graph $\sG(\cZ_N)$ with one end point in $\sX_{N_1}$ and the other end point in $\sY_{N_2}$. If $\sG$ is a directed graph functional, then \eqref{graph2z} is the proportion of directed edges with the outward end in $\sX_{N_1}$ and the inward end in $\sY_{N_2}$. By conditioning on the graph $\sG(\cZ_N)$, it is easy to see that under the null $H_0$, $\E(T(\cZ_N)))$ is $\frac{2N_1N_2}{N(N-1)}$ or $\frac{N_1N_2}{N(N-1)}$, depending on whether the graph functional is undirected or directed. 

In this paper, the graph functionals are computed based on the Euclidean distance in $\R^d$, and the rejection region of the statistic~\eqref{graph2test} will be based on its asymptotic null distribution in the usual limiting regime $N\rightarrow \infty$, with 
\begin{align}\label{pq}
\frac{N_1}{N_1+N_2}\rightarrow p\in (0, 1), \quad \frac{N_2}{N_1+N_2}\rightarrow q:=1-p.
\end{align} 
The test statistics considered in this paper will have $N^{\frac{1}{2}}$-fluctuations under $H_0$. Thus, depending on the type of alternative, the test based on~\eqref{graph2test} will reject $H_0$ for large and/or small values of the standardized statistic 
\begin{align}
\cR(\sG(\cZ_N))=&\sqrt N\left\{T(\sG(\cZ_N))-\E(T(\sG(\cZ_N))) \right\}. 
\label{2sampleG1}
\end{align}

\subsection{Two-Sample Tests Based on Geometric Graphs}
\label{sg}

Many popular multivariate two-sample test statistics are of the form~\eqref{graph2test} where the graph functional $\sG$ is constructed using the inter-point distances of the pooled sample. 

\subsubsection{Wald-Wolfowitz (WW) Runs Test} This is one of the earliest known non-parametric tests for the equality of two univariate distributions \cite{ww}:  Let $\sX_{N_1}$ and $\sY_{N_2}$ be i.i.d. samples of size $N_1$ and $N_2$ as in~\eqref{2}. A {\it run} in the pooled sample $\cZ_N=\sX_{N_1}\cup \sY_{N_2}$ is a maximal non-empty segment of adjacent elements with the same label when the elements in $\cZ_N$ are arranged in increasing order. If the two distributions are different, the elements with labels 1 and 2 would be clumped together, and the total number of runs $C(\cZ_N)$ in $\cZ_N$ will be small. On the other hand, for distributions which are equal/close, the different labels are jumbled up and $C(\cZ_N)$ will be large. Thus, the WW-test rejects $H_0$ for small values of $C(\cZ_N)$. 

Note that the number of runs in $\cZ_N$ minus 1 equals  the number of times the sample label changes as one moves along $\cZ_N$ in increasing order. This implies that the WW-test is a graph-based test~\eqref{graph2test}: $\frac{C(\sX_{N_1}\cup \sY_{N_2})-1}{N-1}=T(\cP(\sX_{N_1}\cup \sY_{N_2}))$, where $\cP(S)$ is the path with $|S|-1$ edges through the elements of $S$ arranged in increasing order, for a finite set $S\subset \R$. 

Let $\cR(\cP(\cZ_N))$  be the standardized version of $T(\cP(Z_N))$ as in~\eqref{2sampleG1}. Wald and Wolfowitz \cite{ww} proved that $\cR(\cP(\cZ_N))$ is distribution-free in finite samples, is asymptotically normal under $H_0$,  and consistent under all fixed alternatives. The WW-test often has low power in practice and has zero asymptotic efficiency, that is, it is powerless against $O(N^{-\frac{1}{2}})$ alternatives \cite{moodefficiency}.

\subsubsection{Friedman-Rafsky (FR) Test}

Friedman and Rafsky \cite{fr} generalized the Wald and Wolfowitz runs test to higher dimensions by using the Euclidean minimal spanning tree of the pooled sample. 

\begin{defn}\label{mst}
Given a nice finite set $S \subset \R^d$, a {\it spanning tree} of $S$ is a connected graph $\cT$ with vertex-set $S$ and no cycles. The {\it length} $w(\cT)$ of $\cT$ is the sum of the Euclidean lengths of the edges of $\cT$. A {\it minimum spanning tree} (MST) of $S$, denoted by $\cT(S)$, is a spanning tree with the smallest length, that is, $w(\cT(S))\leq w(\cT)$ for all spanning trees $\cT$ of $S$.
\end{defn}

Thus, $\cT$ defines a graph functional in $\R^d$, and given $\sX_{N_1}$ and $\sY_{N_2}$ as in~\eqref{2}, the FR-test rejects $H_0$ for small values of 
\begin{align}
T(\cT(\sX_{N_1}\cup \sY_{N_2}))=\frac{\sum_{i=1}^{N_1} \sum_{j=1}^{N_2} \pmb 1\{(X_i, Y_j)\in E(\cT(\sX_{N_1}\cup \sY_{N_2}))\}}{N-1}.
\label{Tfr}
\end{align} This is precisely the WW-test in $d=1$, and is motivated by the same intuition that when the two distributions are different, the number of edges across labels 1 and 2 is small. 

Friedman and Rafsky \cite{fr} calibrated~\eqref{Tfr} as a permutation test, and showed that it has good power in practice for multivariate data. Later, Henze and Penrose \cite{henzepenrose} proved that $\cR(\cT(\cZ_N))$ is asymptotically normal under $H_0$  and is consistent under all fixed alternatives. Recently, Chen and Zhang \cite{czchangepoint} used the FR and related graph-based tests in change-point detection problems, and suggested new modifications of the FR-test for high-dimensional and object data \cite{chenfriedman,hcII}.

\subsubsection{Test Based on $K$-Nearest Neighbor ($K$-NN) Graphs} As in~\eqref{Tfr}, a multivariate two-sample test can be constructed using the $K$-nearest neighbor  graph of $\cZ_N$. This was originally suggested by Friedman and Rafsky \cite{fr} and later studied by Schilling \cite{schilling} and Henze \cite{henzenn}. 

\begin{defn}\label{knn}
Given a nice finite set $S \subset \R^d$, the (undirected) $K$-nearest neighbor  graph ($K$-NN)  is a graph with vertex set $S$ with an edge $(a, b)$, for $a, b \in S$, if the Euclidean distance between $a$ and $b$ is among the $K$-th smallest distances from $a$ to any other point in $S$ and/or among the $K$-th smallest distances from $b$ to any other point in $S$. Denote the undirected $K$-NN of $S$ by $\cN_K(S)$.  
\end{defn}

Given $\sX_{N_1}$ and $\sY_{N_2}$ as in~\eqref{2}, the $K$-NN statistic is 
\begin{align}\label{Tknn}
T(\cN_K(\sX_{N_1}\cup \sY_{N_2}))=\frac{\sum_{i=1}^{N_1} \sum_{j=1}^{N_2} \pmb 1\{(X_i, Y_j)\in E(\cN_K(\sX_{N_1}\cup \sY_{N_2}))\}}{|E(\cN_K(\sX_{N_1}\cup \sY_{N_2}))|}. 
\end{align} 
As before, when the two distributions are different, the number of edges across the two samples is small (see Figure~\ref{3nn}), so the $K$-NN test rejects $H_0$ for small values of~\eqref{Tknn}. Schilling \cite{schilling} considered the case where $K$ remains fixed with $N$, and showed that the test based on $K$ nearest neighbors is asymptotically normal under $H_0$  and consistent against fixed alternatives.\footnote{The statistic~(\ref{Tknn})  is slightly different from the test used by Schilling \cite[Section 2]{schilling}, which can also be re-written as graph-based test~(\ref{graph2test}) by allowing multiple edges in the $K$-NN graph.}  However, the test statistic \eqref{Tknn} makes sense even when $K=K_N \rightarrow \infty$, which we consider in Section \ref{sec:knn}.  

\begin{figure*}[h]
\centering
\begin{minipage}[l]{0.495\textwidth}
\centering
\includegraphics[width=2.5in]
    {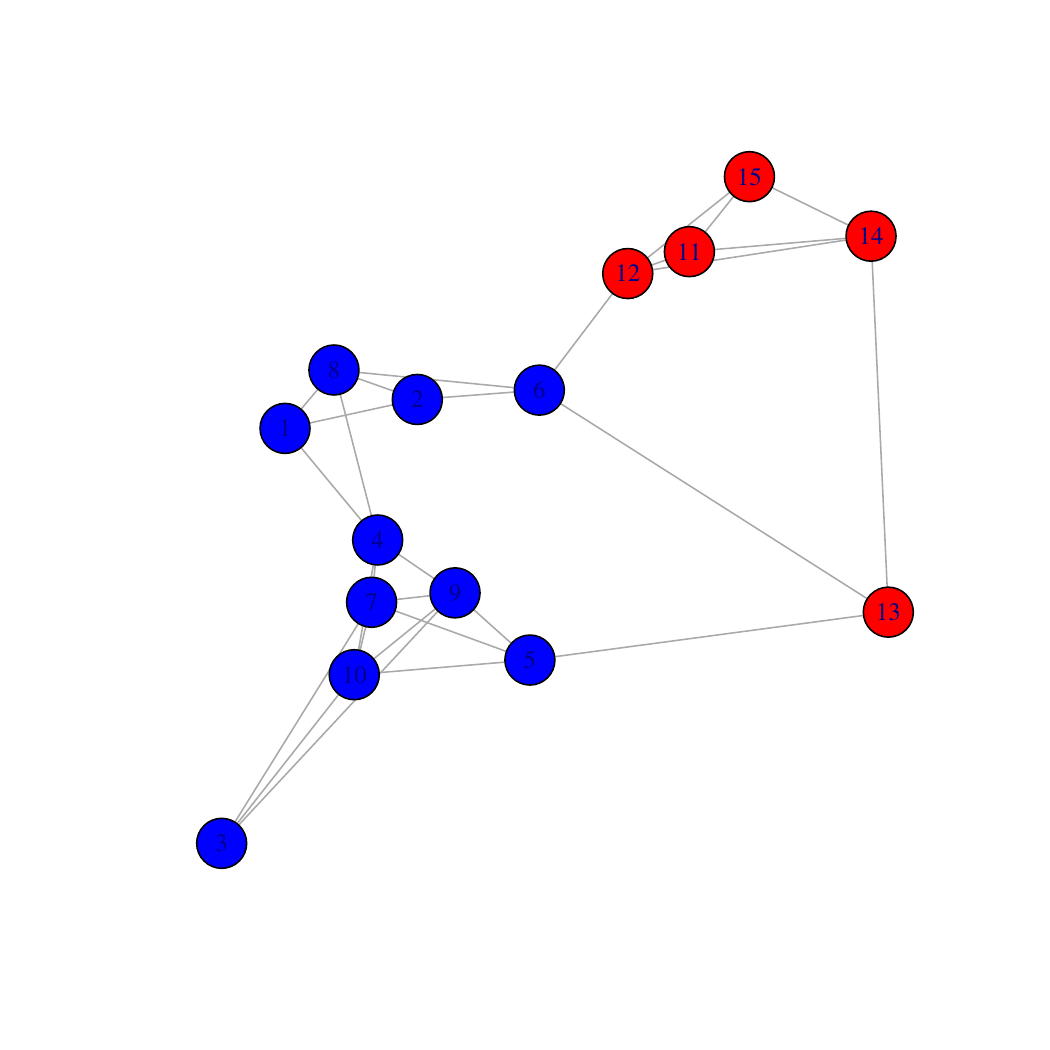}\\
\scriptsize{(a)}
\end{minipage}
\begin{minipage}[c]{0.495\textwidth}
\centering
\includegraphics[width=2.5in]
    {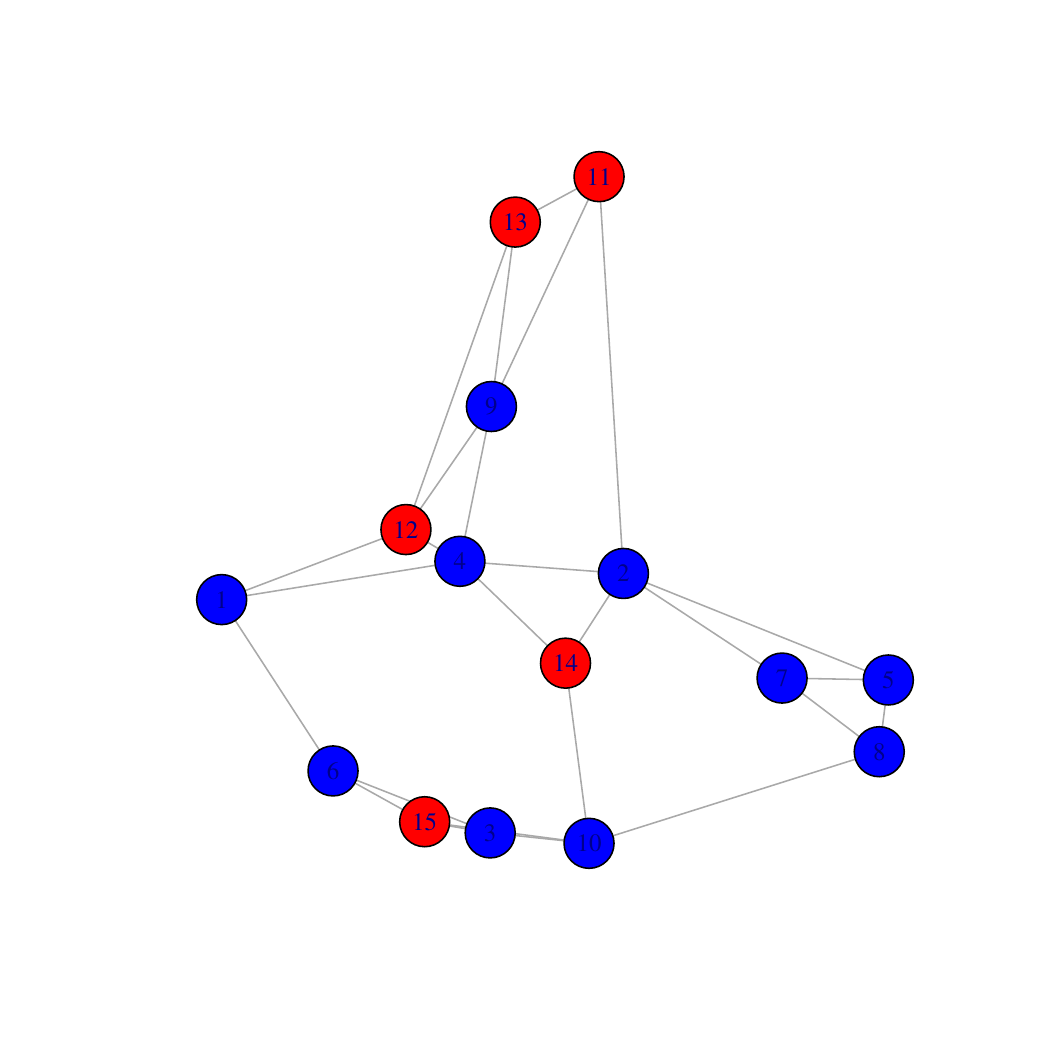}\\
\scriptsize{(b)}
\end{minipage}
\caption{\small{The 3-NN graph on a pooled sample of size 15 in $\R^2$ with 10 i.i.d. points from $N(0, \mathrm I)$ (colored blue) and 5 i.i.d. points from $N(\Delta\cdot \pmb 1, \mathrm I)$ (colored red). For (a) $\Delta=2$, there are 2 edges with endpoints in different samples, and  for (b) $\Delta=0.05$, there are 10 edges with endpoints in different samples. }}
\label{3nn}
\end{figure*}

\subsubsection{Cross-Match (CM) Test}

Rosenbaum \cite{rosenbaum} proposed a distribution-free multivariate two-sample test based on minimum non-bipartite matching. For simplicity, assume that the total number of samples $N$ is even; otherwise, add or delete a sample point to make it even.

\begin{defn}\label{mdm}
Given a finite $S\subset \R^d$ and a symmetric distance matrix $D:=((d(a, b)))_{a\ne b \in S}$, a {\it non-bipartite matching} of $S$ is a pairing of the elements $S$ into $N/2$ non-overlapping pairs, that is, a partition of $S=\bigcup_{i=1}^{N/2}S_i$, where $|S_i|=2$ and $S_i\cap S_j=\emptyset$. The weight of a matching is the sum of the distances between the $N/2$ matched pairs. A {\it minimum non-bipartite matching} (NBM) of $S$ is a matching which has the minimum weight over all matchings of $S$.
\end{defn}

The NBM defines a graph functional $\cW$ as follows: for every finite $S\subset \R^d$, $\cW(S)$ is the graph with vertex set $S$ and an edge $(a, b)$ whenever there exists $i \in [N/2]$ such that $S_i=\{a, b\}$. Note that $\cW(S)$ is a graph with $N/2$ pairwise disjoint edges. Given $\sX_{N_1}$ and $\sY_{N_2}$ as in~\eqref{2}, the CM-test rejects $H_0$ for small values of 
\begin{align}
T(\cW(\sX_{N_1}\cup \sY_{N_2}))=\frac{\sum_{i=1}^{N_1} \sum_{j=1}^{N_2} \pmb 1\{(X_i, Y_j)\in E(\cW(\sX_{N_1}\cup \sY_{N_2}))\}}{N/2}.
\label{Tmdm}
\end{align} 
Like the WW-test, but unlike the FR and the $K$-NN tests, the CM-test is distribution-free in finite samples under $H_0$. Rosenbaum \cite{rosenbaum} implemented this as a permutation test and derived the asymptotic normal distribution under the null $H_0$.

\subsection{Two-Sample Tests Based on Data-Depth}
\label{s2depth}

Many non-parametric two-sample tests are based on depth functions, which are multivariate generalizations of ranks \cite{liu_datadepth,ojaR}. Given a distribution function $F$ in $\R^d$, a {\it depth function} $D(\cdot, F): \R^d\rightarrow [0, 1]$ is a function that provides a ranking of points in $\R^d$. High depth corresponds to {\it centrality}, while low depth corresponds to {\it outlyingness}. The {\it center} consists of the points that globally maximize the depth, and is often considered as a multivariate median of $F$. For $X\sim F$ and $Y \sim G$, two independent random variables in $\R^d$, 
\begin{align}\label{outlyingness}
R_D(y, F) = \P(X: D(X, F) \leq  D(y, F)) 
\end{align}
is a measure of the {\it relative outlyingness} of a point $y\in \R^d$ with respect to $F$. In other words, $R_D(y, F)$ is the fraction of the $F$ population with depth at most as that of the point $y$. The dependence on $D$ in the notation $R_D(\cdot, F)$ will be dropped when it is clear from the context. The {\it quality index}
\begin{align}\label{eq:QFG}
Q(F, G):=\int R(y, F)\mathrm dG(y)=&\P(D(X, F)\leq D(Y, F)| X\sim F, Y\sim G)
\end{align}
is the average fraction of $F$ with depth at most as the point $y$, averaged over $y$ distributed as $G$. When $F=G$ and 
$D(X, F)$ has a continuous distribution, then $R(Y, F)\sim \dU[0, 1]$ and $Q(F, G)=1/2$ \cite[Proposition 3.1]{liu_singh}. 

\begin{defn}Let $\sX_{N_1}$ and $\sY_{N_2}$ be as in~\eqref{2} and $F_{N_1}$ and $G_{N_2}$ be the empirical distribution functions, respectively. The {\it Liu-Singh rank sum statistic} \cite{liu_singh}, is
\begin{equation}\label{QFGsample}
Q(F_{N_1}, G_{N_2}):=\int R(y, F_{N_1})\mathrm dG_{N_2}(y)=\frac{1}{N_2}\sum_{j=1}^{N_2} R(Y_j, F_{N_1}),
\end{equation}
the sample estimator of $Q(F, G)$. 
\end{defn}

The test rejects $H_0$ for small/large values of $\sqrt N\left(Q(F, G)-\frac{1}{2}\right)$ and can be re-written as a graph-based test~\eqref{graph2test}. To see this, note that
\begin{align}\label{Qdepth}
N_1N_2 Q(F_{N_1}, G_{N_2})
=&\sum_{j=1}^{N_2} \sum_{i=1}^{N_1}\pmb 1\{D(X_i, F_{N_1})\leq D(Y_j, F_{N_1})\}.
\end{align}
Let $\cZ_N$ be the pooled sample with the labeling as in~\eqref{graph2z}. Construct a graph $\sG_D(\cZ_N)$ with vertices $\cZ_N$ with a directed edge from $(Z_i, Z_j)$ whenever $D(Z_i, F_{N_1})\leq D(Z_j, F_{N_1})$. Note that $\sG_D(\cZ_N)$ is a complete graph with directions on the edges depending on the relative order of the depth of the two endpoints. Then from~\eqref{Qdepth}, 
\begin{eqnarray*}
N_1N_2 Q(F_{N_1}, G_{N_2})&=&\sum_{1\leq i \ne j \leq N} \psi(c_i, c_j) \pmb 1\{D(Z_i, F_{N_1})\leq D(Z_j, F_{N_1})\}\nonumber\\
&=&\sum_{1\leq i \ne j \leq N}  \psi(c_i, c_j) \pmb 1\{ (Z_i, Z_j)\in E(\sG_D(\cZ_N))\},
\end{eqnarray*}
with $\psi(\cdot, \cdot)$ as in~\eqref{graph2z}. Since $|E(\sG_D(\cZ_N))|=\frac{N(N-1)}{2}$, this implies, $Q(F_{N_1}, G_{N_2})=\frac{N(N-1)}{2 N_1N_2}T(\sG_D(\cZ_N))$,
where $T$ is defined in~\eqref{graph2z}. 

Thus, the Liu-Singh rank sum statistic based on a depth function $D$ is a graph-based test~\eqref{graph2test}. Common depth functions include the Mahalanobis depth, the halfspace depth, the simplicial depth, and the projection depth, among others. In the following, we recall the definitions for a few of these (see \cite{liu_datadepth} for more on depth functions).

\begin{enumerate}[1.] 

\item {\it Mann-Whitney Test}: This is one of the oldest non-parametric two-sample tests for univariate data \cite{elnp,vandervaart}. Depending on the alternative, the Mann-Whitney rank test rejects $H_0$ for small or large values of $\sum_{i=1}^{N_1}\sum_{j=1}^{N_2}\pmb 1\{X_i < Y_j\}$. This is a distribution-free test, which corresponds to taking $D(x, F)=F(x)$ in~(\ref{Qdepth}).

\item {\it Halfspace Depth}: Tukey \cite{tukey} suggested the following depth-function 
\begin{align}\label{halfspace}
HD(x, F) = \inf \int_{H_x} \mathrm d F,
\end{align}
where the infimum is taken over all closed halfspace $H_x$ with $x$ on its boundary. The two-sample test based on the halfspace depth is obtained by using~(\ref{halfspace}) in the statistic~(\ref{Qdepth}). 

\item {\it Mahalanobis Depth:} Given a distribution function $F$, the Mahalanobis depth of a point $x\in \R^d$ is 
\begin{equation}\label{mahalanobis}
MD(x, F) = \frac{1}{1+ (x -\mu (F))'\Sigma^{-1}(F)(x -\mu (F))}
\end{equation}
where $\mu(F)=\int x \mathrm dF(x)$ and $\Sigma(F)=\int (x-\mu(F))(x-\mu(F))' \mathrm dF(x)$ are the mean and 
covariance of the distribution $F$. 
\end{enumerate}

The Liu-Singh rank sum statistic is consistent against alternatives for which the quality index $Q(F, G)\ne \frac{1}{2}$ (recall \eqref{outlyingness}), under mild conditions \cite{liu_singh}. A special version of Liu-Singh rank sum statistic, with a reference sample, inherits the distribution-free property of the Mann-Whitney test in finite samples \cite[Section 4]{liu_singh}. However, the Liu-Singh rank sum statistic defined above \eqref{QFGsample} is only asymptotically distribution-free under the null hypothesis \cite{liu_singh,zuolimitdist}.  Zuo and He \cite[Theorem 1]{zuolimitdist} proved the asymptotic normality of the Liu-Singh rank sum statistic under general alternatives for depth functions satisfying certain regularity conditions.  

\subsection{Properties of Graph-Based Tests}
\label{sec:properties}

A test function $\phi_N$ for the testing problem \eqref{2} is said to be {\it asymptotically exact level} $\alpha$, if $\lim_{N\rightarrow \infty}\E_{H_0}(\phi_N)=\alpha$. An exact level $\alpha$ test function $\phi_N$ is said to be consistent against the alternative $H_1$, if $\lim_{N\rightarrow \infty}\E_{H_1}(\phi_N)=1$, that is, the power of the test $\phi_N$ converges to 1 in the usual asymptotic regime~\eqref{pq}.

To describe the asymptotic properties of the tests above, we assume that the distributions
$F$ and $G$ have densities $f$ and $g$ with respect to Lebesgue measure, respectively; and under the alternative $H_1$, $f$ and $g$ differ on a set of positive Lebesgue measure. Under this assumption, all the tests considered in Sections~\ref{sg} and~\ref{s2depth} have the following common properties: 

\begin{itemize}

\item {\it Asymptotically Distribution-Free}: The standardized test statistic \eqref{2sampleG1} converges to $N(0, \sigma^2_1)$ under $H_0$, where the limiting variance $\sigma^2_1$  depends on the graph functional $\sG$, but not on the unknown distributions. Therefore, under the null, the asymptotic distribution of the test statistic  \eqref{2sampleG1} is {\it distribution-free}. This was proved for the FR-test by Henze and Penrose \cite{henzepenrose}, for the test based on the $K$-NN graph by Henze \cite{henzenn}, and by Liu and Singh \cite{liu_singh} for depth-based tests. Finally, recall that the CM  test is exactly distribution-free in finite samples \cite{rosenbaum}. 

\item {\it Consistent Against Fixed Alternatives}: For a geometric graph functional $\sG$ in $\R^d$, the test function with rejection region
\begin{align}\label{rejregion}
\{\cR(\sG(\cZ_N))<\sigma_1z_{\alpha}\},
\end{align}
where $z_\alpha$ is the $\alpha$-th quantile of the standard normal, is asymptotically exact level (size) $\alpha$ (see \cite{lr} for definitions of level and size of a test) and consistent against all fixed alternatives, for the testing problem \eqref{2}.  This was proved for the MST by Henze and Penrose \cite{py} and for the $K$-NN graph (when $K=O(1)$) by Schilling \cite{schilling} and later by Henze \cite{henzenn}. Recently, Arias-Castro and Pelletier~\cite{cm_consistency} showed that the CM test is consistent against general alternatives. Similarly, the Liu-Singh rank sum statistic\footnote{For depth-based tests, the rejection region can be of the form $\{|\cR(\sG(\cZ_N))|\geq \sigma_1 z_{1-\alpha/2}\}$, depending on the alternative.} is asymptotically size $\alpha$ and consistent against alternatives for which the quality index $Q(F, G)\ne \frac{1}{2}$.

\end{itemize}

Even though the basic asymptotic properties of the tests based on geometric graphs and those based on data-depth are quite similar, their algorithmic complexities are very different. 

\begin{itemize}

\item Tests based on geometric graphs such as the MST, the $K$-NN graph, and the CM-test can be computed in polynomial time with respect to both the number of data points $N$ and the dimension $d$. For instance, the MST and the $K$-NN graph of a set of $N$ points in $\R^d$ can be computed easily in $O(dN^2)$ time, and the NBM matching can be computed in $O(dN^3)$ time \cite{comboptbook}.

\item On the other hand, depth-based tests are generally difficult to compute when the dimension is large because the computation time is polynomial in $N$ but exponential in $d$. In fact, computing the Tukey depth of a set of $N$ points in $\R^d$ is  NP-hard  if  both $N$ and $d$ are  parts  of  the  input  \cite{jp},  and it  is  even  hard  to  approximate  \cite{akhd}. For more details on algorithms to compute various depth functions, refer to the recent survey of Rousseeuw and Hubert \cite{depthsurvey} and the references therein. 

\end{itemize}

\subsection{Summary of Results}

The general notion of graph-based two-sample tests~\eqref{graph2test} introduced above provides a unified framework for analyzing and  {\it statistically} comparing their asymptotic properties. We derive the limiting power of these tests against {\it local alternatives}, that is, alternatives which shrink towards the null as the sample size grows to infinity. If the statistic \eqref{2sampleG1} is asymptotically normal under the null and the test based on \eqref{rejregion} is consistent, it can be expected to have non-trivial  power (greater than the level of the test) against $O(N^{-\frac{1}{2}})$ local alternatives. This is formalized using the notion of {\it Pitman efficiency} of a test (see \eqref{effdefn} below), and can be used to compare the asymptotic performances of different tests. The following is a summary of the results obtained: 

\begin{enumerate}[1.]

\item In Section~\ref{ae} the asymptotic (Pitman) efficiency of a general graph-based test is derived. 
This result can be used as a black-box to derive the efficiency of any such two-sample test (Theorem~\ref{ASYME} and Corollary~\ref{ASYMEundirected}). The results obtained show how combinatorial properties of the underlying graph effect the performance of the associated test, which can be effectively used to construct and analyze new tests. The results are illustrated through simulations and compared with other parametric tests in Section \ref{examples}. 

\item As a consequence of the general result the asymptotic efficiency of the tests described in  Sections~\ref{sg} and~\ref{s2depth} can be derived: 

\begin{itemize}

\item[$\bullet$]  It is shown that the Friedman-Rafsy test has zero asymptotic efficiency, that is, it is powerless against any $O(N^{-\frac{1}{2}})$ local alternatives (Theorem~\ref{MSTAE}). In fact, this phenomenon extends to a large class of random geometric graphs that exhibit local spatial dependence. This can be formalized using the notion of {\it stabilization} of geometric graphs \cite{py}, which includes the MST, the $K$-NN graph (where $K=O(1)$ is fixed) among others (Theorem~\ref{POWERSTABILIZE}). 

\item[$\bullet$] Our general theorem can be used to compute the asymptotic efficiency of the $K$-NN based test, when $K=K_N \rightarrow \infty$, as well (Proposition \ref{ppn:knn}). Here, the Pitman efficiency can be non-zero, when $K$ grows with $N$ sufficiently fast. This is reinforced in simulations, which combined with the computational efficiency of the $K$-NN test (running time is polynomial in $K$, the sample size $N$, and the dimension $d$), makes this test particularly attractive, both theoretically and in applications.

\item [$\bullet$] The Pitman efficiency of tests based on data-depth~\eqref{QFGsample} is computed in Section \ref{depthtest}. These tests have non-trivial local power, and hence, non-zero asymptotic efficiencies, for many $O(N^{-\frac{1}{2}})$ alternatives (Theorem~\ref{POWERSTABILIZE}). However, as mentioned earlier, these tests become computationally expensive as dimension increases.

\item [$\bullet$]  Recently, Chen and Friedman \cite{chenfriedman} proposed a modification of the test statistic \eqref{2sampleG1}, which is especially powerful when sample size is small and the dimension is large.  Our general framework can be modified to include these tests as well, and derive their limiting power against local alternatives (Theorem \ref{thm:frnew}). 
 
\end{itemize}

\item Finally, the performance of the Friedman-Rafsy test and the test based on the halfspace depth are compared on the sensorless drive diagnosis data set \cite{sensordriving} (Section \ref{drivediag}).\footnote{All codes used in the paper can be downloaded from \url{http://www-stat.wharton.upenn.edu/~bhaswar/Graph_Based_Two_Sample_Codes.zip}.} 
\end{enumerate}

In Section \ref{sec:hd} we study the performance of these tests when sample sizes are small and the dimension is comparable to the sample size. In this case, classical parametric tests, which involves computation of the sample covariance matrix, often breaks down; and the non-parametric tests start to dominate \cite{tsp,chenfriedman}. This is validated by the simulations in Section \ref{sec:hd}, where the tests based on geometric graphs, such the FR and the CF tests, outperform the other tests, in the high-dimensional regime. We conclude with a discussion about the performances of the different tests, and which tests to use in practice (Section \ref{sec:extgen}).

\section{Preliminaries}
\label{preliminaries}

Begin by recalling some definitions and notation: For $x \in \R$, $x_+=\max\{x, 0\}$, and for a vector $z\in \R^s$, $||z||=(\sum_{i=1}^s z_i^2)^{\frac{1}{2}}$, is the Euclidean norm of $z$.

To quantify the notion of local alternatives, let $\Theta\subseteq \R^p$ and $\{\P_\theta\}_{\theta\in \Theta}$ be a parametric family of distributions in $\R^d$ with density $f(\cdot|\theta)$, with respect to Lebesgue measure, indexed by a $p$-dimensional parameter $\theta \in \Theta$. Throughout, we will assume that the distributions in $\{\P_\theta\}_{\theta\in \Theta}$ have a common support $\cK \subseteq \R^d$, which does not depend on $\theta$. To compute asymptotic efficiency of tests, certain smoothness conditions are required on $f(\cdot|\theta)$. The standard technical condition is to assume that the family  $\{\P_\theta\}_{\theta\in \Theta}$ is {\it quadratic mean differentiable} (QMD)  (see \cite[Definition 12.2.1]{lr} for details). The QMD assumption implies differentiability in norm of the square root of the density, which holds for most standard families of distributions, including exponential families in natural form.

\begin{assumption}
\label{dist4}
A parametric family of distributions $\{\P_{\theta}\}_{\theta\in \Theta}$, with $\Theta\subseteq \R^p$, is said to satisfy Assumption~\ref{dist4} at $\theta=\theta_1$, if  $\P_{\theta}$ is QMD at $\theta=\theta_1$, and for $V_1\sim \P_{\theta_1}$ and all $h \in \R^p$,  $\E(|\langle h, \eta(V_1, \theta_1)\rangle |^4)<\infty$, where $\eta(\cdot, \theta):=\frac{\grad_\theta f(\cdot|\theta)}{f(\cdot|\theta)}$  is the {\it score function}. 
\end{assumption}

Let $\sX_{N_1}$ and $\sY_{N_2}$ be samples from $\P_{\theta_1}$ and $\P_{\theta_2}$ as in~\eqref{2}, respectively. For $h \in \R^p$, consider the testing problem,
\begin{equation}\label{localpower2}
H_0: \theta_2-\theta_1=0, \quad \text{versus} \quad H_1: \theta_2-\theta_1=\frac{h}{\sqrt N}.
\end{equation}
Note that the tests are still carried out in the non-parametric setup assuming no knowledge of the distributions of the two samples. However, the efficiency is computed assuming a parametric form for the unknown distributions.\footnote{Another choice of alternatives for computing the efficiency of  a non-parametric test is to consider: $H_0: g=f, \quad \text{versus} \quad H_1:g=(1-\frac{\delta}{\sqrt N})f+\frac{\delta}{N} g'$, for some density $g'$ in $\R^d$ and $\delta>0$. Then, under mild integrability assumptions, the densities associated with the alternative are contiguous, and the efficiency results derived in this paper easily extend to this case, as well. We chose the formulation in \eqref{localpower2} because it yields slightly cleaner asymptotic expansions and formulas.}

The {\it local asymptotic power} of the test with rejection region~\eqref{rejregion} for the testing problem \eqref{localpower2} is  $\lim_{N \rightarrow \infty}\P_{\theta_1+\frac{h}{\sqrt N}}(\cR(\sG(\cZ_N))<z_{\alpha})$. If $\cR(\sG(\cZ_N))$ is asymptotically normal under $H_0$, the local asymptotic power can be often written as
\begin{align}\label{effdefn}
\lim_{N \rightarrow \infty}\P_{\theta_1+\frac{h}{\sqrt N}}(\cR(\sG(\cZ_N))<z_{\alpha})=\Phi\left(z_{\alpha}-\gamma \right). 
\end{align} 
The quantity $\gamma$ (which depends on $\theta_1$, $h$, and the graph functional $\sG$) in the limit above is then referred to as {\it the asymptotic (Pitman) efficiency} of the test statistic $\cR(\sG(\cZ_N))$ for the testing problem~(\ref{localpower2}), and will be denoted by $\mathrm{AE}(\sG)$. The ratio of the asymptotic efficiencies of two test statistics is the {\it relative (Pitman) efficiency}, which is the ratio of the number of samples required to achieve the same limiting power between two size $\alpha$-tests. Refer to the textbooks \cite{elnp,lr,vandervaart} for more on contiguity, local power and asymptotic efficiencies of tests. \vspace{-0.05in}

\section{Asymptotic Efficiency of Graph-Based Two-Sample Tests}
\label{ae}

This section describes the main results about the asymptotic efficiencies of general graph-based two-sample tests. There are two cases, depending on whether the graph functional is directed or undirected. To begin, let $\sG$ be a directed graph functional in $\R^d$. Denote by $E(\sG(S))$ the set of edges in $\sG(S)$, and by $E^+(\sG(S))$ the set of pairs of vertices with edges in both directions (that is, the set of ordered pairs of vertices $(x, y)$ such that both $(x, y)\in E(\sG(S))$ and $(y, x)\in E(\sG(S))$), respectively. 

For $x\in \R^d$, let $d^\uparrow(x, \sG(S))$ be the out-degree of the vertex $x$ in the graph $\sG(S\cup \{x\})$, that is, the number of outgoing edges $(x, y)$, where  $ y \in S\cup \{x\}$, in the graph $\sG(S\cup \{x\})$. Similarly, let $d^\downarrow(x, \sG(S))$ be the in-degree of the vertex $x$ in the graph $\sG(S\cup \{x\})$, that is, the number of incoming edges $(y, x)$, where  $y \in S\cup \{x\}$, in the graph $\sG(S\cup \{x\})$. Moreover, let $d(x, \sG(S))=d^\downarrow(x, \sG(S))+d^\uparrow(x, \sG(S))$ be the total degree of the vertex $x$ in the graph $\sG(S\cup \{x\})$. Define the {\it scaled in-degree} and the {\it scaled out-degree} of a vertex as follows: 
\begin{equation}
\lambda^\downarrow(x, \sG(S))=\frac{Nd^\downarrow(x, \sG(S^x))}{|E(\sG(S^x))|}, \quad \lambda^\uparrow(x, \sG(S))=\frac{Nd^\uparrow(x, \sG(S^x))}{|E(\sG(S^x))|},
\label{lambdadefn}
\end{equation}
where $S^x=S\cup\{x\}$. Also, let 
\begin{equation}
T_2^{\uparrow}(\sG(S))=\sum_{x\in S} {d^\uparrow (x, \sG(S))\choose 2}, \quad T_2^{\downarrow}(\sG(S))=\sum_{x\in S} {d^\downarrow (x, \sG(S))\choose 2}
\label{2star}
\end{equation}
be the number of outward 2-stars and inward 2-stars in $\sG(S)$, respectively. Finally, let $T_2^{+}(\sG(S))$ be the number of 2-stars in $\sG(S)$ with different directions on the two edges.

For an undirected graph functional $\sG$, denote by $d(x, \sG(S))$ the degree of the vertex $x$ in the graph $\sG(S\cup \{x\})$. As in~\eqref{lambdadefn} and~\eqref{2star}, let
\begin{equation}
\lambda(x, \sG(S))=\frac{Nd(x, \sG(S^x))}{|E(\sG(S^x))|}, \quad T_2(\sG(S))=\sum_{x\in S} {d(x, \sG(S))\choose 2}.
\label{lambdaundirected}
\end{equation}

Intuitively, the functions $\lambda^\uparrow(x, \sG(S))$, $\lambda^\uparrow(x, \sG(S))$, and $\lambda(x, \sG(S))$, measure the relative position of the point $x$ in set $S\cup\{x\}$. For example, in the FR-test or the test based on the $K$-NN graph, large values of $\lambda$ correspond to points near the center of the data-cloud, whereas small values correspond to outliers. Similarly, for depth-based tests, small/large values of $\lambda^{\downarrow}$  generally correspond to points which are outliers. In fact, for such tests, this is directly related to the relative outlyingness of  the point $x$, as defined in~\eqref{outlyingness} (see Observation~\ref{eq:depthlimit}).

\subsection{Asymptotic Efficiency for Directed Graph Functionals}
\label{directedae}

Let $\sG$ be a directed graph functional in $\R^d$ and $\{\P_\theta\}_{\theta\in \Theta}$ a parametric family of distributions satisfying~Assumption \ref{dist4} at $\theta=\theta_1\in \Theta\subseteq \R^p$. To derive the asymptotic efficiency of a general graph-based test various assumptions are required on the graph functional $\sG$.

\begin{assumption}\label{varcondition} ({\it Variance Condition}) The pair $(\sG, \P_{\theta_1})$ is said to satisfy the {\it variance condition with parameters} $(\beta_0, \beta_0^+, \beta_1^\uparrow, \beta_1^\downarrow, \beta^+_1)$, if for $\cV_N:=\{V_1, V_2, \ldots, V_N\}$ i.i.d. from $\P_{\theta_1}$, there exist finite non-negative constants $\beta_0$, $\beta_0^+$, $\beta_1^\uparrow$, $\beta_1^\downarrow$, and $\beta^+_1$ (which do not depend on $\P_{\theta_1}$) such that 
\begin{enumerate}[(a)]
\item $\frac{N}{|E(\sG(\cV_N))|}\pto \beta_0$, and  $\frac{N|E^+(\sG(\cV_N))|}{|E(\sG(\cV_N))|^2}\pto \beta_0^+$,

\item $\frac{NT_2^{\uparrow}(\sG(\cV_N))}{|E(\sG(\cV_N))|^2}\pto \beta_1^{\uparrow},~\frac{NT_2^{\downarrow}(\sG(\cV_N))}{|E(\sG(\cV_N))|^2}\pto \beta_1^{\downarrow}$, and 

\item $\frac{NT_2^{+}(\sG(\cV_N))}{|E(\sG(\cV_N))|^2}\pto \beta_1^{+}$.
\end{enumerate}
\end{assumption}

The asymptotic efficiency will be derived using Le Cam's Third Lemma \cite[Corollary 12.3.2]{lr}, for which the joint normality of $\cR(\sG(\cZ_N))$ and the log-likelihood ratio $L_N$ is required. For this the following two conditions are required:

\begin{assumption}\label{covcondition}({\it Covariance Condition}) The pair $(\sG, \P_{\theta_1})$ is said to satisfy the covariance condition if for $\cV_N:=\{V_1, V_2, \ldots, V_N\}$ i.i.d. from $\P_{\theta_1}$ the following holds: 

\begin{enumerate}[(a)]

\item There exists functions $\lambda^\uparrow, \lambda^\downarrow: \cK \rightarrow \R$, such that,  for almost all $z\in \cK$, 
\begin{align}\label{lambda}
\lambda^\uparrow (z)=\lim_{N\rightarrow \infty}\E(\lambda^\uparrow(z, \sG(\cV_N))), \quad \text{and} \quad \lambda^\downarrow (z)=\lim_{N\rightarrow \infty}\E(\lambda^\downarrow(z, \sG(\cV_N))), 
\end{align} 
and zero otherwise. 
\item For $h \in \R^p$ as in~\eqref{localpower2},
\begin{equation}
\frac{1}{N}\sum_{i=1}^N\langle h, \eta(V_i, \theta_1)\rangle\lambda^{\uparrow}(V_i, \sG(\cV_N))\pto \int \langle h, \grad  f(z|\theta_1)\rangle \lambda^{\uparrow}(z)\mathrm d z, 
\label{covassumption1}
\end{equation}
and the same holds for $\lambda^{\downarrow}$. 
\end{enumerate}
\end{assumption}

Next, define $\Delta (\sG(S)):=\max_{i \in [N]}d(V_i, \sG(S))$ the total maximum degree of the graph $\sG(S)$, where $[N]:=\{1, 2, \ldots, N\}$. 

\begin{assumption}\label{normalcondition}({\it Normality Condition})
The pair $(\sG, \P_{\theta_1})$ is said to satisfy the {\it normality condition} if for $\cV_N:=\{V_1, V_2, \ldots, V_N\}$ i.i.d. from $\P_{\theta_1}$ {\it either} one of the following holds:
\begin{enumerate}[(a)] 

\item ({\it Condition N1}) The maximum degree $\Delta (\sG(\cV_N))=O_P(1)$.

\item  ({\it Condition N2}) The maximum degree $\Delta (\sG(\cV_N)) \pto \infty$ and $\frac{N \Delta(\sG(\cV_N))}{|E(\sG(\cV_N))|} =O_P(1)$.
\end{enumerate}
\end{assumption}

\begin{remark}Assumption~\ref{normalcondition} above implies that either (a) the graph has bounded degree (Condition N1), or (b) the maximum degree $\Delta(\sG(\cV_N))$ is of the same order as the average degree $|E(\sG(\cV_N))|/N$ (Condition N2), that is, the graph is `approximately' regular. This ensures that the conditional standard deviation $\sqrt{\Var(T(\sG(\cZ_N))|\cZ_N)}=\Theta(1/\sqrt N)$, justifying the scaling in \eqref{2sampleG1}. It is possible to consider a slightly more general class of statistics which 
re-normalizes $T(\sG(\cZ_N))-\E(T(\sG(\cZ_N)))$ by $\sqrt{\Var(T(\sG(\cZ_N))|\cZ_N)}$ itself (than $1/\sqrt N$). The asymptotic efficiency of such a  statistic can be similarly derived, after the covariance condition has been rescaled and the normality condition has been modified appropriately. We have decided to scale by $1/\sqrt N$ instead, because this leads to more interpretable conditions (Assumption~\ref{normalcondition}), which are easier to apply in our examples, and includes all known graph-based two-sample tests in the literature. 
\end{remark}

The following result gives the asymptotic efficiency of a graph-based test, if the graph functional satisfies the above conditions. To this end, define $r:=2pq$, where $p$ and $q$ are defined in \eqref{pq}.

\begin{thm}\label{ASYME} Let $\sG$ be a directed graph functional and $\{\P_\theta\}_{\theta\in \Theta}$ a parametric family of distributions in $\R^d$ satisfying~Assumption \ref{dist4} at $\theta=\theta_1\in \Theta\subseteq \R^p$. Suppose the pair $(\sG, \P_{\theta_1})$ satisfies 
the variance condition~\ref{varcondition} with parameters $(\beta_0, \beta_0^+, \beta_1^\uparrow, \beta_1^\downarrow, \beta^+_1)$, the covariance condition~\ref{covcondition}, and the normality condition~\ref{normalcondition}. Then the asymptotic efficiency of the test statistic~\eqref{2sampleG1} for the testing problem~\eqref{localpower2} is
\begin{align*}
\mathrm{AE(\sG)}:=\frac{\frac{r}{2}\left(p\int \langle h, \grad  f(z|\theta_1)\rangle \lambda^{\downarrow}(z)\mathrm d z-q \int \langle h, \grad f(z|\theta_1)\rangle \lambda^{\uparrow}(z)\mathrm d z\right)}{\sqrt{r\left\{\frac{\beta_0-1}{2}+ q \beta^{\uparrow}_1+ p\beta^{\downarrow}_1-\frac{r}{2}\left(\frac{\beta_0}{2}+\beta^{+}_0+\beta^{\downarrow}_1+ \beta^{\uparrow}_1+\beta^{+}_1-2\right)\right\}}},
\end{align*} 
whenever the denominator above is strictly positive.
\end{thm}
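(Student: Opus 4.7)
The plan is to verify the joint weak convergence~\eqref{ll_TN_joint} under $H_0$ and then read off the conclusion from Le Cam's Third Lemma, as anticipated in the discussion preceding Definition~\ref{effdefn}. The QMD hypothesis and the standard two-sample LAN expansion (Lehmann--Romano, Theorem~12.2.3) give
\begin{equation*}
L_N \;=\; S_N - \tfrac{q}{2}\langle h, \mathrm I(\theta_1) h\rangle + o_P(1), \qquad S_N \;:=\; \tfrac{1}{\sqrt N}\sum_{j=1}^N \pmb 1\{c_j=2\}\,\langle h, \eta(Z_j,\theta_1)\rangle,
\end{equation*}
so it suffices to prove the bivariate CLT of $(\cR(\sG(\cZ_N)), S_N)$ under $H_0$ and to identify the entries of the limiting covariance matrix; the limiting variance of $S_N$ is $q\langle h,\mathrm I(\theta_1)h\rangle$ by the classical CLT for i.i.d.\ sums.

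Both the variance $\sigma_1^2$ and the covariance $\sigma_{12}$ can be extracted by conditioning on the unordered pooled sample $\cZ_N$, since under $H_0$ the labels $(c_1,\ldots,c_N)$ form a uniformly random assignment of $N_1$ ones and $N_2$ twos. Writing $X_{(i,j)} := \pmb 1\{c_i=1, c_j=2\}$ and enumerating ordered pairs of directed edges by their incidence pattern---coincident, reverse pair in $E^+$, outward 2-star, inward 2-star, mixed-direction 2-star, vertex-disjoint---the corresponding hypergeometric covariances contribute five leading terms scaling with $|E|$, $|E^+|$, $T_2^\uparrow$, $T_2^\downarrow$, $T_2^+$, each normalized by $N/|E|^2$. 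Applying Assumption~\ref{varcondition} collapses these into $\beta_0,\beta_0^+,\beta_1^\uparrow,\beta_1^\downarrow,\beta_1^+$ in exactly the combination that produces the denominator of $\mathrm{AE}(\sG)$. For the covariance, the parallel calculation gives
\begin{equation*}
\mathrm{Cov}\bigl(X_{(i,j)},\,\pmb 1\{c_k=2\} \,\big|\, \cZ_N\bigr) \;=\; \tfrac{N_1^2 N_2}{N^2(N-1)}\pmb 1\{k=j\} - \tfrac{N_1 N_2^2}{N^2(N-1)}\pmb 1\{k=i\} + O(1/N),
\end{equation*}
so that, with $g_k := \langle h, \eta(Z_k,\theta_1)\rangle$ and since $\mathbb E[\cR(\sG(\cZ_N))\mid\cZ_N]=0$ kills the cross-term in the variance decomposition,
\begin{equation*}
\mathrm{Cov}(\cR(\sG(\cZ_N)), S_N) \;=\; \tfrac{N_1^2 N_2}{N^2(N-1)} \cdot \tfrac{1}{N}\sum_k g_k \lambda^{\downarrow}(Z_k, \sG(\cZ_N)) - \tfrac{N_1 N_2^2}{N^2(N-1)} \cdot \tfrac{1}{N}\sum_k g_k \lambda^{\uparrow}(Z_k, \sG(\cZ_N)) + o_P(1).
\end{equation*}
Invoking Assumption~\ref{covcondition}(b) to replace each empirical average by $\int \langle h, \grad f(z|\theta_1)\rangle \lambda^{\uparrow/\downarrow}(z)\,\mathrm d z$, and using $\tfrac{N_1^2 N_2}{N^2(N-1)} \to p^2 q$, $\tfrac{N_1 N_2^2}{N^2(N-1)} \to pq^2$ together with $\tfrac{r}{2} = pq$, yields $\sigma_{12}$ in precisely the form of the numerator of $\mathrm{AE}(\sG)$.

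The main obstacle is the joint CLT itself, which by Cramér--Wold reduces to showing that every fixed linear combination $a\cR(\sG(\cZ_N)) + bS_N$ is asymptotically normal under $H_0$. This is where the dichotomy in Assumption~\ref{normalcondition} enters. Under Condition N1 the maximum degree is $O_P(1)$, so after conditioning on $\cZ_N$ the edge-indicators $X_e$ together with the vertex-indicators $\pmb 1\{c_k=2\}$ form a dependency graph of bounded degree, and normality follows from Stein's method for locally dependent sums (e.g.\ Chen--Shao or Chatterjee's bound); Assumption~\ref{dist4} is used to bound the $L^4$-norms of the score-weighted summands in the Lindeberg-type remainder. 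Under Condition N2 the degree diverges, but the bound $N\Delta(\sG(\cV_N))/|E(\sG(\cV_N))| = O_P(1)$ forces $\Delta$ to remain of the same order as the average degree, and normality can be obtained from Hoeffding's combinatorial CLT applied to the hypergeometric label assignment (equivalently, a martingale CLT along a random permutation). In either case, unconditional joint normality follows by integrating over $\cZ_N$ once the conditional second moments are shown to concentrate at the deterministic limits computed above.

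Once~\eqref{ll_TN_joint} is established, Le Cam's Third Lemma yields $\cR(\sG(\cZ_N)) \dto N(\sigma_{12}, \sigma_1^2)$ under $H_1$, so the test with rejection region $\{\cR(\sG(\cZ_N)) < \sigma_1 z_\alpha\}$ has limiting power $\Phi(z_\alpha - \sigma_{12}/\sigma_1)$, and reading off the coefficients identifies $\mathrm{AE}(\sG) = |\sigma_{12}|/\sigma_1$ in the claimed form.
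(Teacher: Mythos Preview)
Your overall architecture matches the paper's: establish~\eqref{ll_TN_joint} by computing $\sigma_1^2$ and $\sigma_{12}$ via conditional moments, prove the joint CLT, and invoke Le~Cam's Third Lemma. Your moment calculations are correct and, done directly under the hypergeometric labeling, yield the same limits the paper obtains after its bootstrap-and-conditioning detour.

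The substantive gap is in the CLT step. Under $H_0$, conditionally on $\cZ_N$, the labels $(c_1,\ldots,c_N)$ are a uniform random assignment of $N_1$ ones and $N_2$ twos, so \emph{every} pair of label indicators is (weakly) negatively correlated. Consequently the collection $\{X_e\}_{e\in E}\cup\{\pmb 1\{c_k=2\}\}_k$ does \emph{not} form a dependency graph of bounded degree: the Chen--Shao local-dependence Stein bound you invoke under N1 requires independence across non-adjacent index sets, which fails here. Likewise, under N2 Hoeffding's combinatorial CLT treats linear permutation statistics $\sum_i a_{i\pi(i)}$, whereas $\cR(\sG(\cZ_N))$ is a degree-two functional of the labeling (a sum over edges); a martingale CLT could in principle be pushed through, but this is nontrivial and you have not indicated how.

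The paper resolves exactly this difficulty by a device you did not use: it replaces the permutation null by the \emph{bootstrap} null (labels i.i.d.\ with probabilities $N_1/N$, $N_2/N$), under which the only dependence among summands comes from the graph structure and both Stein arguments (Chen--Shao under N1, Reinert--R\"ollin exchangeable pairs under N2) apply cleanly. It then proves a \emph{trivariate} CLT for $(\cR(\sG(\cZ_N)),\dot\ell_N,\overline B_N)$, where $\overline B_N$ is the centered bootstrap count, and recovers the genuine permutation null by conditioning on $\{\overline B_N=0\}$ via Holst's conditional limit theorem. This conditioning is also what produces the correction $\sigma_1^2=\sigma_{11}^2-\tfrac{1}{2}r(1-2r)$, converting the bootstrap variance $\sigma_{11}^2$ into the permutation variance appearing in the denominator. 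Your direct-permutation route is legitimate in principle, but to complete it you would need a Stein or CLT tool that genuinely handles sampling without replacement for quadratic label functionals; the paper's bootstrap trick is the cleanest way to sidestep that.
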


The proof of the theorem in given in Appendix \ref{varcovlimits}. The efficiency formula in Theorem \ref{ASYME} shows how combinatorial properties of the underlying graph effect the performance of the associated test, through the functions $\lambda^\uparrow$, $\lambda^\downarrow$. Moreover, the formula holds (for tests based on geometric graphs)  for any distance function $\rho$ in $\R^d$ as long as the pooled sample $\cZ_N$ is nice with respect to $\rho$ (that is, all pairwise distances are unique) and the pair $(\sG, \P_{\theta_1})$ satisfies the assumptions of the theorem. In our applications in Section \ref{applications}, $\rho$ will be the Euclidean metric, but the result continues to hold for other natural distance functions like $L_p$ and the Mahalanobis distance.

\begin{remark} (Limiting Null Distribution) The proof of Theorem~\ref{ASYME} also gives the limiting null distribution of the test statistic~\eqref{2sampleG1}, unifying several known results in the literature
Note that the variance condition~\ref{varcondition} and the normality condition~\ref{normalcondition} naturally extend to the pair $(\sG, \P_f)$, where $\P_f$ is the probability measure induced by $f$. The proof of Theorem~\ref{ASYME} shows that if the pair $(\sG, \P_f)$ satisfies the variance condition~\ref{varcondition} and the normality condition~\ref{normalcondition}, then under the null hypothesis ($f=g$) $\cR(\sG(\cZ_N))\dto N(0, \sigma_1^2)$, where 
$\sigma_1$ is the denominator of the formula in Theorem~\ref{ASYME}. In particular, this gives a unified proof for the asymptotic null distribution of the FR-test \cite[Theorem 1]{henzepenrose}, the $K$-NN test \cite[Theorem 3.1]{schilling}, the cross match test \cite[Proposition 2]{rosenbaum}, and the Liu-Singh rank sum statistic~\cite[Theorem 6.2]{liu_singh}. 
\end{remark}

\subsection{Asymptotic Efficiency for Undirected Graph Functionals}
\label{undirectedae}

Every undirected graph functional $\sG$ can be modified to a directed graph functional $\sG_+$ in a natural way: For $S\subset \R^d$ finite, $\sG_+(S)$ is obtained by replacing every edge in $\sG(S)$ with two edges one in each direction. The asymptotic efficiency of the test based on $\sG$ can then be derived by applying  Theorem~\ref{ASYME} to the directed graph functional $\sG_+$. The following are the analogue of Assumptions~\ref{varcondition}-\ref{normalcondition} for undirected graph functionals.

\begin{assumption}\label{varundirectedcond} Let $\sG$ be an undirected graph functional and assume $\cV_N:=\{V_1, V_2, \ldots, V_N\}$ be i.i.d. with density $\P_{\theta_1}$.  The pair $(\sG, \P_{\theta_1})$ is said to satisfy Assumption~\ref{varundirectedcond} if the following hold:

\begin{itemize}

\item ($(\gamma_0, \gamma_1)$-{\it Undirected Variance Condition}) There exists finite non-negative constants $\gamma_0$,  $\gamma_1$ such that 
\begin{align}
\frac{N}{|E(\sG(\cV_N))|}\pto \gamma_0 \quad \text{ and } \quad \frac{N|T_2(\sG(\cV_N))|}{|E(\sG(\cV_N))|^2}\pto \gamma_1.
\label{gamma0gamma1}
\end{align}

\item ({\it Undirected Covariance Condition}) There exists a function $\lambda: \cK \rightarrow \R$, such that for almost all $z\in \cK$, $\lambda(z):=\lim_{N\rightarrow \infty}\E(\lambda(z, \sG(\cV_N)))$, and zero otherwise. Moreover, for $h \in \R^p$ as in~\eqref{localpower2}, 
\begin{equation}
\frac{1}{N}\sum_{i=1}^N\langle h, \eta(V_i, \theta_1)\rangle\lambda(V_i, \sG(\cV_N))\pto \int \langle h, \grad  f(z|\theta_1)\rangle \lambda(z)\mathrm d z.
\label{covundirected}
\end{equation}

\item ({\it Normality Condition}) The pair $(\sG_+, \P_{\theta_1})$ satisfies the normality condition as in Assumption~\ref{normalcondition}.

\end{itemize}
\end{assumption}

If $\sG$ satisfies the above condition, then $\sG_+$ satisfies Assumptions~\ref{varcondition}-\ref{normalcondition}, and  applying Theorem~\ref{ASYME} for $\sG_+$ we get the following:

\begin{cor} \label{ASYMEundirected}Let $\sG$ be an undirected graph functional and $\{\P_\theta\}_{\theta\in \Theta}$ a parametric family of distributions in $\R^p$ satisfying~Assumption \ref{dist4} at $\theta=\theta_1\in \Theta$.  If the pair $(\sG, \P_{\theta_1})$ satisfies Assumption \ref{varundirectedcond}, then the asymptotic efficiency of the test statistic \eqref{2sampleG1} is
\begin{align}\label{effundirected}
\mathrm{AE(\sG)}=\frac{\left(\frac{r}{2}(p-q)\int \langle h, \grad  f(z|\theta_1)\rangle \lambda(z)\mathrm d z \right)}{ \sqrt{r\left\{\gamma_0(1-r)+ (\gamma_1-2)(1-2r) \right\}}},
\end{align}
whenever the denominator above is strictly positive.
\end{cor}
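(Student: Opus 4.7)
The plan is to reduce the undirected problem to the directed one by passing to the \emph{doubled} directed graph functional $\sG_+$ and then invoking Theorem~\ref{ASYME}. For each finite $S\subset\R^d$, form $\sG_+(S)$ by replacing every undirected edge $\{x,y\}\in E(\sG(S))$ with the pair of directed edges $(x,y)$ and $(y,x)$. Because $\psi(c_i,c_j)=\pmb 1\{c_i=1,c_j=2\}$ in \eqref{graph2z} picks out only the $X$-to-$Y$ direction, each undirected cross-edge $\{X_i,Y_j\}$ contributes a single directed edge $(X_i,Y_j)$ to the numerator of $T(\sG_+(\cZ_N))$, while the denominator $|E(\sG_+(\cZ_N))|=2|E(\sG(\cZ_N))|$ doubles; hence $T(\sG_+(\cZ_N))=\tfrac12 T(\sG(\cZ_N))$ and $\cR(\sG_+(\cZ_N))=\tfrac12\cR(\sG(\cZ_N))$. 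Asymptotic efficiency is invariant under a constant rescaling of the test statistic (both the local mean shift and the null standard deviation rescale identically), so $\mathrm{AE}(\sG)=\mathrm{AE}(\sG_+)$ and the task reduces to applying Theorem~\ref{ASYME} to $\sG_+$.

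The next step is to verify that $(\sG_+,\P_{\theta_1})$ satisfies Assumptions~\ref{varcondition}--\ref{normalcondition} and record the resulting parameters in terms of $(\gamma_0,\gamma_1,\lambda)$. The pointwise identities $d^{\uparrow}(x,\sG_+(S^x))=d^{\downarrow}(x,\sG_+(S^x))=d(x,\sG(S^x))$ together with $|E(\sG_+(S))|=|E^+(\sG_+(S))|=2|E(\sG(S))|$ and $T_2^{\uparrow}(\sG_+(S))=T_2^{\downarrow}(\sG_+(S))=T_2(\sG(S))$ immediately give $\beta_0=\beta_0^+=\gamma_0/2$ and $\beta_1^{\uparrow}=\beta_1^{\downarrow}=\gamma_1/4$. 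The parameter $\beta_1^+$ follows from an explicit enumeration of out-edge/in-edge pairs at each vertex of $\sG_+$, producing a linear combination of $T_2(\sG(\cV_N))$ and $|E(\sG(\cV_N))|$ whose limit is determined by \eqref{gamma0gamma1}. For the covariance condition, $\lambda^{\uparrow}(x,\sG_+(\cV_N))=\lambda^{\downarrow}(x,\sG_+(\cV_N))=\tfrac{1}{2}\lambda(x,\sG(\cV_N))$, so \eqref{covassumption1} transfers from \eqref{covundirected} by linearity with limits $\lambda^{\uparrow}=\lambda^{\downarrow}=\lambda/2$. The normality condition is postulated directly in Assumption~\ref{varundirectedcond}.

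With the hypotheses verified, substitute the derived parameters into the formula of Theorem~\ref{ASYME}. The two integrals in the numerator coalesce because $\lambda^{\uparrow}=\lambda^{\downarrow}$, yielding $\tfrac{r(p-q)}{4}\int\langle h,\grad f(z|\theta_1)\rangle\lambda(z)\,dz$. The expression under the square root becomes a polynomial in $\gamma_0,\gamma_1,r$; using $p+q=1$ and $r=2pq$ to collect like terms (and keeping track of the $T(\sG_+)=\tfrac{1}{2}T(\sG)$ rescaling, which relates the null variance for the doubled graph to that for $\sG$ via a factor of $\tfrac14$) simplifies it to $\tfrac14 r\{\gamma_0(1-r)+(\gamma_1-2)(1-2r)\}$. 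The compensating factors of $\tfrac12$ between numerator and denominator cancel, giving the formula~\eqref{effundirected}.

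The principal difficulty lies in the algebra of the last step: the five directed parameters must be substituted and the resulting quadratic in $r$ massaged into the stated product form $\gamma_0(1-r)+(\gamma_1-2)(1-2r)$. Enumerating $T_2^+(\sG_+(S))$ correctly---accounting both for out/in edge pairs at distinct neighbors (contributing $T_2(\sG)$) and for back-and-forth pairs along the same undirected edge (contributing $|E(\sG)|$)---is what ensures the $\gamma_0$-coefficient collapses into the clean factor $(1-r)$ rather than a more complicated polynomial in $r$.
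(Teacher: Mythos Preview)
Your strategy is exactly the paper's: pass to the doubled directed functional $\sG_+$, verify Assumptions~\ref{varcondition}--\ref{normalcondition} for $(\sG_+,\P_{\theta_1})$ in terms of the undirected data $(\gamma_0,\gamma_1,\lambda)$, and plug into Theorem~\ref{ASYME}. The identification $\lambda^{\uparrow}=\lambda^{\downarrow}=\lambda/2$ and the observation $\mathrm{AE}(\sG)=\mathrm{AE}(\sG_+)$ are also handled just as in the paper.

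Where you diverge is in the bookkeeping for $\beta_0^+$ and $\beta_1^+$, and this matters for the algebra. The paper obtains $\beta_0^+=\gamma_0/4$ (effectively treating $E^+(\sG_+)$ as the set of \emph{unordered} bidirectional pairs, so $|E^+(\sG_+)|=|E(\sG)|$, not $2|E(\sG)|$) and $\beta_1^+=\gamma_1/2$ from $T_2^+(\sG_+)=\sum_i d(V_i,\sG)(d(V_i,\sG)-1)=2T_2(\sG)$, \emph{excluding} the degenerate back-and-forth pairs along a single undirected edge (those are already accounted for in the $E^+$ term and are not $2$-stars on three vertices). With $\bigl(\beta_0,\beta_0^+,\beta_1^\uparrow,\beta_1^\downarrow,\beta_1^+\bigr)=\bigl(\tfrac{\gamma_0}{2},\tfrac{\gamma_0}{4},\tfrac{\gamma_1}{4},\tfrac{\gamma_1}{4},\tfrac{\gamma_1}{2}\bigr)$ the denominator in Theorem~\ref{ASYME} becomes
\[
r\Bigl\{\tfrac{\gamma_0-2}{4}+\tfrac{\gamma_1}{4}-\tfrac{r}{2}\bigl(\tfrac{\gamma_0}{2}+\gamma_1-2\bigr)\Bigr\}
=\tfrac{r}{4}\bigl\{\gamma_0(1-r)+(\gamma_1-2)(1-2r)\bigr\},
\]
which after the $1/2$ rescaling yields \eqref{effundirected} directly. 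By contrast, your values $\beta_0^+=\gamma_0/2$ and a $\beta_1^+$ containing an extra $|E(\sG)|$ contribution do \emph{not} collapse to $\gamma_0(1-r)$; substituting them produces a spurious $\gamma_0 r$ term. So your closing remark is inverted: it is precisely by \emph{not} including the back-and-forth pairs in $T_2^+$ (and using the paper's convention for $|E^+|$) that the $\gamma_0$-coefficient reduces cleanly to $(1-r)$.
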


\begin{remark}\label{rm:efficiency_formula} Note that the numerator in~\eqref{effundirected} is zero when $p=q=1/2$, that is, when the two sample sizes $N_1$ and $N_2$ are asymptotically equal. This is because, conditional on the graph, the variables $\{\psi(c_i, c_j): (Z_i, Z_j) \in E(\sG(\cZ_N))\}$ are pairwise independent when $p=q$, and, as a result, the test statistic \eqref{2sampleG1} does not correlate with the likelihood ratio. Therefore, depending on the value of the denominator in \eqref{effundirected} the following two cases arise:
\begin{enumerate}[$\bullet$]
\item If the graph functional is non-sparse, that is, $|E(\sG(\cV_N))|/N\rightarrow \infty$, then $\gamma_0=0$ in~\eqref{gamma0gamma1} and the denominator in~\eqref{effundirected} is zero, since $r=2pq=1/2$. Therefore, non-sparse undirected graph functionals do not have a non-degenerate distribution at the $N^{\frac{1}{2}}$ scale when $p=q$, and Corollary~\ref{ASYMEundirected} does not apply.  This degeneracy is well-known in the graph-coloring literature: For example, the limiting distribution of the test statistic under the null hypothesis in this case follows from \cite[Theorem 1.3]{BDM}.

\item If the graph is sparse, that is, $\gamma_0>0$, then the denominator of~\eqref{effundirected} is non-zero when $p=q$. {\it This shows that tests based on sparse graph functionals cannot have non-zero efficiencies when the two-sample sizes are asymptotically equal.}

\end{enumerate}

\end{remark}

\section{Applications}
\label{applications}

In this section we compute the asymptotic efficiencies of the tests described in Section~\ref{graph2tests}, under the Euclidean distance,  using Theorem~\ref{ASYME} and Corollary~\ref{ASYMEundirected}. Extensions and generalizations which can be used to construct locally efficient tests are also discussed.

\subsection{The Friedman-Rafsky (FR) Test} Let $\cT$ be the MST functional as in Definition~\ref{mst} and  consider the two-sample test based on $\cT$~\eqref{Tfr}. The following theorem shows that this test has zero asymptotic efficiency, under the Euclidean distance.

\begin{thm}\label{MSTAE}
Let $\{\P_\theta\}_{\theta\in \Theta}$ be a parametric family of distributions in $\R^d$ satisfying~Assumption \ref{dist4} at $\theta=\theta_1\in \Theta$. Then the asymptotic efficiency of the Friedman-Rafsky test~\eqref{Tfr}, under the Euclidean distance, for the testing problem~\eqref{localpower2}, is zero, that is, $\mathrm{AE}(\cT)=0$.
\end{thm}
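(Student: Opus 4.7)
My plan is to invoke Corollary~\ref{ASYMEundirected} for the undirected MST functional $\cT$ and show that the numerator of the efficiency formula vanishes by a scale-invariance argument. To verify the hypotheses, note that $|E(\cT(\cV_N))|=N-1$ is deterministic, so $\gamma_0=1$ in~\eqref{gamma0gamma1}, and a classical geometric fact about the Euclidean MST in $\R^d$ is that its maximum degree is bounded by the kissing number $\kappa_d$ (any two MST edges meeting at a common vertex must subtend an angle of at least $\pi/3$). The deterministic bound $\Delta(\cT(\cV_N))\leq \kappa_d$ supplies the normality condition N1 of Assumption~\ref{normalcondition}, and combined with Penrose--Yukich stabilization it yields the existence of $\gamma_1$ together with the pointwise convergence $\E[\lambda(z,\cT(\cV_N))]\to\lambda(z)$ required in Assumption~\ref{varundirectedcond}.

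The crux is identifying $\lambda(z)$. Since $|E(\cT(\cV_N^z))|=N$, one has $\lambda(z,\cT(\cV_N)) = d(z,\cT(\cV_N^z))$ --- the degree of $z$ once inserted into the sample. Stabilization of the MST gives the pointwise limit
\begin{align*}
\lambda(z) \;=\; \lim_{N\to\infty}\E\!\left[d\bigl(z,\cT(\cV_N^z)\bigr)\right] \;=\; \E\!\left[d\bigl(0,\cT(\{0\}\cup \cP_{f(z|\theta_1)})\bigr)\right],
\end{align*}
where $\cP_c$ denotes a homogeneous Poisson process on $\R^d$ of intensity $c>0$. Here I exploit the \emph{scale-invariance} of the MST: rescaling all vertex coordinates by the factor $c^{-1/d}$ preserves the ordering of pairwise distances, hence the MST edge-set, and transforms a rate-$c$ Poisson process into a rate-$1$ one. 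Consequently the law of $d(0,\cT(\{0\}\cup \cP_c))$ is independent of $c$, so $\lambda(z)\equiv \lambda_0$ is a universal constant depending only on $d$.

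Substituting $\lambda(z)\equiv \lambda_0$ into the formula in Corollary~\ref{ASYMEundirected} collapses the numerator:
\begin{align*}
\int \langle h,\nabla f(z|\theta_1)\rangle\, \lambda(z)\,\mathrm d z \;=\; \lambda_0\, \biggl\langle h,\int \nabla f(z|\theta_1)\,\mathrm d z\biggr\rangle \;=\; 0,
\end{align*}
since $\int \nabla f(z|\theta_1)\,\mathrm d z = \nabla\!\int f(z|\theta_1)\,\mathrm d z = 0$ (equivalently, the score identity $\E_{\theta_1}[\eta(V,\theta_1)]=0$ holds under Assumption~\ref{dist4}). Provided the denominator is strictly positive, this forces $\mathrm{AE}(\cT)=0$.

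The main technical obstacle is upgrading the pointwise limit $\E[\lambda(z,\cT(\cV_N))]\to \lambda(z)$ to the $L^1$-style convergence~\eqref{covundirected} demanded by Assumption~\ref{varundirectedcond}, namely the probabilistic convergence of the score-weighted degree sum $\frac{1}{N}\sum_i \langle h,\eta(V_i,\theta_1)\rangle\, d(V_i,\cT(\cV_N^{V_i}))$ to $\int \langle h,\nabla f(z|\theta_1)\rangle \lambda(z)\,\mathrm d z$. The uniform bound $\Delta\leq \kappa_d$ supplies uniform integrability for the degree factor, while the fourth-moment hypothesis on the score in Assumption~\ref{dist4} controls the weights $\langle h,\eta(V_i,\theta_1)\rangle$; together with standard stabilization machinery these ingredients should deliver~\eqref{covundirected}. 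The scale-invariance step itself, which is what actually drives the efficiency to zero, is short and elementary once this analytic bookkeeping is in place.
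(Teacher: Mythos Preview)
Your approach is essentially the paper's: both apply Corollary~\ref{ASYMEundirected}, use the bounded-degree property of the Euclidean MST for normality condition N1, and show $\lambda(z)$ is constant (the paper cites \cite[Proposition~1]{henzepenrose} to get $\lambda(z)=2$, while you derive constancy from scale invariance---the same mechanism). The numerator then vanishes because $\int\langle h,\nabla f\rangle=0$.

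The technical obstacle you flag---upgrading the pointwise limit $\E\lambda(z,\cT(\cV_N))\to\lambda(z)$ to the convergence in probability~\eqref{covundirected}---is exactly where the paper does concrete work, and your appeal to ``standard stabilization machinery'' is not quite sufficient as stated: the score factor $\langle h,\eta(V_i,\theta_1)\rangle$ is unbounded, so the summands $\langle h,\eta(V_i,\theta_1)\rangle\,d(V_i,\cT(\cV_N))$ do not directly fit the uniform moment hypotheses of the Penrose--Yukich weak law. The paper's remedy is a truncation step: write $K_M(z)=\langle h,\eta(z,\theta_1)\rangle\pmb 1\{|\langle h,\eta(z,\theta_1)\rangle|\leq M\}$, control the tail $\overline K_M$ via the deterministic degree bound $\lambda\leq B_d$ together with $\E|\langle h,\eta\rangle|<\infty$, and for the bounded truncated piece show
\[
\Var\Bigl(\tfrac1N\sum_i K_M(V_i)\,\lambda(V_i,\cT(\cV_N))\Bigr)=O(1/N)
\]
via the Efron--Stein inequality---the bounded-difference condition holds because replacing one sample point perturbs the MST only locally (cf.\ \cite[Lemma~2.1]{steeleshepp}), changing $O(B_d)$ degrees by $O(B_d)$ each. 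This truncate-then-Efron--Stein argument is the ingredient your sketch leaves implicit.
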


The FR-test has zero asymptotic efficiency because the function $\lambda(z)$, in this case, does not depend on $z$, and hence, the numerator in~\eqref{effundirected} is zero. This is a consequence of the following well-known result: For $\cV_N=\{V_1, V_2, \ldots, V_N\}$ i.i.d. $\P_{\theta_1}$ and $z \in \cK$, $\lim_{N\rightarrow \infty}\E(\lambda(z, \cT(\cV_N)))=2$ (refer to \cite[Proposition 1]{henzepenrose}). The Efron-Stein inequality \cite{esineq} can then be used to show that $(\cT, \P_{\theta_1})$ satisfies the covariance condition with the constant function $\lambda(z)=2$. The details of the proof are given in the supplementary materials.

\subsection{Tests Based on Stabilizing  Graphs}
\label{stabilizingtests}

The proof of Theorem~\ref{MSTAE} uses the fact that the MST graph functional has local dependence, that is, addition/deletion of a point only effects the edges incident on the neighborhood of that point. This phenomenon holds for many other random geometric graphs and was formalized by Penrose and Yukich \cite{py} using the notion of stabilization.

Let $\sG$ be a graph functional defined for all locally finite subsets of $\R^d$. (The $K$-NN graph can be naturally extended to locally finite infinite points sets. Aldous and Steele \cite{aldous_steele} extended the MST graph functional to locally finite infinite point sets using the Prim's algorithm.)  For $S\subset \R^d$ locally finite and $x\in \R^d$, let $E(x, \sG(S))$ be the set edges incident on $x$ in $\sG(S\cup \{x\})$. Note that $|E(x, \sG(S))|=d(x, \sG(S))$, the (total) degree of the vertex $x$ in $\sG(S\cup \{x\})$.

\begin{defn}\label{ts}Given $S\subset \R^d$ and $y\in \R^d$ and $a\in \R$, denote by $y+S=\{y+z: z\in S\}$ and $aS:=\{az: z\in S\}$. A graph functional $\sG$ is said to be {\it translation invariant} if the graphs $\sG(x+S)$ and $\sG(S)$ are isomorphic for all points $x\in \R^d$ and all locally finite $S\subset \R^d$. A graph functional $\sG$ is {\it scale invariant} if $\sG(aS)$ and $\sG(S)$ are isomorphic for all points $a\in \R$ and  and all locally finite $S\subset \R^d$. 
\end{defn}

Let $\cP_\lambda$ be the Poisson process of intensity $\lambda\geq 0$ in $\R^d$, and $\cP_{\lambda}^x := \cP_{\lambda} \cup \{x\}$, for $x\in \R^d$. Penrose and Yukich \cite{py} defined stabilization of graph functionals over homogeneous Poisson processes as follows:

\begin{defn}[Penrose and Yukich \cite{py}] \label{stabilization} A translation and scale invariant graph functional $\sG$ {\it stabilizes} $\cP_\lambda$ if there exists a random but almost surely finite variable $R$ such that
\begin{equation}
E(0, \sG(\cP_{\lambda}^0))=E(0,  \sG(\cP_{\lambda}^0\cap B(0, R)\cup \sA)),
\end{equation}
for all finite $\sA\subset \R^d \setminus B(0, R)$, where $B(0, R)$ is the (Euclidean) ball of radius $R$ with center at the point $0\in \R^d$.
\end{defn}

\begin{remark}Informally, stabilization ensures the insertion of a point (or finitely many points) `far' away from the origin $0$ does not effect the degree of $0$ in the graph $\sG(\cP_{\lambda}^0)$, that is, it only has a `local effect'. Many graph functionals such as the MST, the $K$-NN, the Delaunay graph, and the Gabriel graph, are stabilizing \cite{py}. 
\end{remark}

The following theorem shows that tests based on stabilizing graph functionals have zero asymptotic efficiency, under the Euclidean distance. The proof is given in the supplementary materials.

\begin{thm} Let $\{\P_\theta\}_{\theta\in \Theta}$ be a parametric family of distributions in $\R^d$ satisfying~Assumption \ref{dist4} at $\theta=\theta_1\in \Theta$, and $\sG$ be a translation and scale invariant graph functional which stabilizes $\cP_1$. If the pair $(\sG, \P_{\theta_1})$ satisfies Assumption~\ref{normalcondition} and
\begin{equation}
\quad \sup_{N\in \N}\sup_{z\in \R^d}\E\left(d(z, \sG(\cZ_N))^s\right)< \infty,
\label{degcond}
\end{equation}
for some $s >4$, then the asymptotic efficiency of the two-sample test based on $\sG$~\eqref{2sampleG1}, under the Euclidean distance, for the testing problem~\eqref{localpower2}, is zero, that is, $\mathrm{AE}(\sG)=0$. 
\label{POWERSTABILIZE}
\end{thm}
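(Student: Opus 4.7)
The strategy mirrors Theorem~\ref{MSTAE}: invoke Corollary~\ref{ASYMEundirected} and show that the limiting scaled-degree function $\lambda(z)$ is \emph{constant} in $z$, which forces the numerator $\int \langle h, \grad f(z|\theta_1)\rangle \lambda(z)\,\mathrm d z$ in the efficiency formula to vanish. To verify the variance condition in Assumption~\ref{varundirectedcond}, I would write $|E(\sG(\cV_N))|=\frac{1}{2}\sum_{v\in\cV_N} d(v,\sG(\cV_N))$ and $T_2(\sG(\cV_N))=\sum_{v\in\cV_N}\binom{d(v,\sG(\cV_N))}{2}$, and apply the Penrose--Yukich law of large numbers for sums of stabilizing functionals; the moment bound~\eqref{degcond} with $s>4$ supplies more than enough uniform integrability to identify the constants $\gamma_0$ and $\gamma_1$.

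The key step is to show that $\lambda(z):=\lim_N\E[\lambda(z,\sG(\cV_N))]$ exists and is independent of $z$. By a standard Poisson coupling, the point pattern of $\cV_N\cup\{z\}$ near $z$, rescaled by $(Nf(z|\theta_1))^{1/d}$, converges in distribution to the Palm version $\cP_{1,0}$ of a unit-intensity Poisson process; by stabilization (Definition~\ref{stabilization}) the degree of $z$ depends only on points within a random but a.s.\ finite radius, so $d(z,\sG(\cV_N\cup\{z\}))\Rightarrow d(0,\sG(\cP_{1,0}))$ in distribution, and the $L^s$-bound~\eqref{degcond} with $s>4$ promotes this to convergence in expectation. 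The crucial observation is that by \emph{scale invariance}, $\sG(\cP_{\lambda,0})$ has the same law as an abstract graph for every $\lambda>0$, so $c_1:=\E[d(0,\sG(\cP_{1,0}))]$ does not depend on the local density $f(z|\theta_1)$. Consequently $\lambda(z)\equiv c_1\gamma_0$ is constant in $z$. Combined with a weak law for score-weighted stabilizing functionals---using Cauchy--Schwarz with the fourth-moment score bound from Assumption~\ref{dist4} against the $L^s$-bound on the degree---this yields the covariance condition~\eqref{covundirected} with limit
\[
c_1\gamma_0\int\langle h,\grad f(z|\theta_1)\rangle\,\mathrm d z=c_1\gamma_0\left\langle h,\grad_{\theta_1}\!\int f(z|\theta_1)\,\mathrm d z\right\rangle=0,
\]
where the interchange of differentiation and integration is standard under Assumption~\ref{dist4}.

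Since Assumption~\ref{normalcondition} is given, Corollary~\ref{ASYMEundirected} then yields $\mathrm{AE}(\sG)=0$ (provided the denominator of~\eqref{effundirected} is strictly positive; otherwise the statistic is already degenerate at the $\sqrt N$-scale and the claim is vacuous). The directed case is analogous: stabilization plus scale invariance force $\lambda^\uparrow(z)\equiv c_\uparrow$ and $\lambda^\downarrow(z)\equiv c_\downarrow$, so both score integrals in the numerator of Theorem~\ref{ASYME} vanish by the same $\int\grad f=0$ argument. The main technical obstacle lies in the key step above: upgrading the stabilization-based weak convergence of the scaled degree to the $L^1$-type convergence needed for the score-weighted sum in~\eqref{covundirected}. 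The moment hypothesis~\eqref{degcond} with $s>4$ (rather than the more common $s=2$) is precisely what allows a Cauchy--Schwarz against the score---whose fourth moment is finite by Assumption~\ref{dist4}---while leaving enough room to control the boundary and edge effects introduced by the Poisson coupling.
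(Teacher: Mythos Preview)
Your proposal is correct and takes essentially the same approach as the paper: verify Assumption~\ref{varundirectedcond} via the Penrose--Yukich weak law, then show via stabilization together with scale invariance that the limiting scaled degree $\lambda(z)$ (respectively $\lambda^\uparrow,\lambda^\downarrow$ in the directed case) is constant in $z$, so the score integral in the numerator of~\eqref{effundirected} vanishes because $\int\langle h,\grad f(z|\theta_1)\rangle\,\mathrm dz=0$. The only minor discrepancy is your reading of the hypothesis $s>4$: in the paper it is used to obtain uniform integrability of the \emph{product} $d(z_1,\sG(\cZ_N))\,d(z_2,\sG(\cZ_N))$ via Cauchy--Schwarz on the degrees (this is what drives the pairwise covariances to zero in the two-point coupling argument for~\eqref{covundirected}), rather than to pair the degree against the fourth moment of the score.
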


Theorem~\ref{POWERSTABILIZE} can be used to re-derive Theorem \ref{MSTAE} and compute the asymptotic efficiency of the $K$-NN test.

\begin{enumerate}

\item[(1)] {\it Minimum Spanning Tree (MST)}: By \cite[Lemma 2.1]{py} the MST graph functional $\cT$ stabilizes $\cP_1$. Moreover, the degree of a vertex in the MST of a set of points in $\R^d$ is bounded by a constant $B_d$, depending only on the dimension $d$ \cite[Lemma 4]{aldous_steele}. Therefore, the normality condition N1 in Assumption~\ref{normalcondition} and the moment condition~\eqref{degcond} are trivially satisfied. Theorem~\ref{POWERSTABILIZE} then implies $\mathrm{AE(\cT)}=0$, thus re-deriving Theorem~\ref{MSTAE}.

\item[(2)] {\it $K$-Nearest Neighbor ($K$-NN)}: By \cite[Lemma 6.1]{pyclt}, the $K$-NN graph functional $\cN_K$, where $K=O(1)$ is fixed with $N$, stabilizes $\cP_1$. Condition N1 in Assumption~\ref{normalcondition} and the moment condition~\eqref{degcond} are trivially satisfied, since $\cN_K$ is a bounded degree graph functional. This implies, $\mathrm{AE}(\cN_K)=0$, showing that the test based on the $K$-NN graph has no asymptotic local power, when $K=O(1)$. 

\end{enumerate}

\subsection{The Test Based on the $K$-NN Graph}
\label{sec:knn}

The result in the previous section shows that the test based on the $K$-NN graph has zero Pitman efficiency, when $K=O(1)$ is fixed with the $N$. But what about when $K=K_N \rightarrow \infty$ with $N$? In this case, the nearest-neighbor graph is no longer stabilizing, and Theorem \ref{POWERSTABILIZE} does not apply.  However, we can directly invoke Corollary \ref{ASYMEundirected} to compute the asymptotic efficiency. To this end, let $S \subset \R^d$ be a finite set and $z \in \R^d$ be a fixed point and $K=K_N \rightarrow \infty$. It follows from \cite[Lemma 1]{randomized_nn} that  $d(z, \cN_{K_N}(S)) \leq C_d K_N$, where $C_d$ is a constant depending only on the dimension $d$. This implies, for $\cV_N=\{V_1, V_2, \ldots, V_N\}$ i.i.d. $f(\cdot|\theta_1)$, the maximum degree $\Delta(\cN_{K_N}(\cV_N))=\max_{i \in [N]} d(V_i, \cN_{K_N}(\cV_N)) \leq C_d K_N$. Moreover, each vertex in the graph $\cN_{K_N}(\cV_N)$ has degree at least $K_N$, which means the total number of edges $|E(\cN_{K_N}(\cV_N))|=\frac{1}{2}\sum_{i=1}^N d(V_i, \cN_{K_N}(\cV_N)) \geq  \frac{K_N N}{2}$. Hence, 
\begin{align}\label{eq:KNN_N2condition}
\frac{N \Delta(\cN_{K_N}(\cV_N))}{|E(\cN_{K_N}(\cV_N))|} \lesssim C_d=O(1),
\end{align} 
that is, Condition N2 in Assumption \ref{normalcondition}  is satisfied.  Therefore, assuming there exists functions $\eta_0, \eta_1: \cK \rightarrow \R$ such that 
\begin{align}\label{eq:z01}
\eta_0(z):=\lim_{N \rightarrow \infty}\frac{1}{K_N} \E(d(z, \cN_{K_N}(\cV_N))) \quad \text{and} \quad \eta_1(z):=\lim_{N \rightarrow \infty}\frac{1}{K_N^2} \E{d(z, \cN_{K_N}(\cV_N))\choose 2},
\end{align}
for almost all $z \in \cK$ (and zero otherwise), the asymptotic efficiency of the $K$-NN can be derived using Corollary \eqref{ASYMEundirected}. (Note that both the limits in \eqref{eq:z01} are finite, since $\max_{z \in \R^d} d(z, \cN_{K_N}(\cV_N)) \leq C_d K_N$, almost surely.)

\begin{ppn}\label{ppn:knn} Let $\{\P_\theta\}_{\theta\in \Theta}$ be a parametric family of distributions in $\R^d$ satisfying~Assumption \ref{dist4} at $\theta=\theta_1\in \Theta$. Then the asymptotic efficiency of the $K$-NN test (where $K=K_N \rightarrow \infty$), for the testing problem~\eqref{localpower2}, is 
\begin{align}\label{eq:efficiencyKNN}
\mathrm{AE}(\cN_{K_N})=\frac{\left(r(p-q) \frac{\int \eta_0(z) \grad  f(z|\theta_1) \mathrm dz }{\int \eta_0(z) f(z|\theta_1) \mathrm dz} \right)}{ \sqrt{r \left[\frac{4 \int \eta_1(z) f(z|\theta_1) \mathrm dz}{\left(\int \eta_0(z) f(z|\theta_1) \mathrm dz \right)^2}-2\right](1-2r) }},
\end{align}
whenever the denominator above is strictly positive, where $\eta_0(\cdot)$ and $\eta_1(\cdot)$ are as defined in \eqref{eq:z01}. 
\end{ppn}

The proof of this result, which entails verifying the conditions in Corollary \ref{ASYMEundirected}, is given in the supplementary materials. Note that the formula in \eqref{eq:efficiencyKNN} has a couple of degeneracies: 

\begin{enumerate}

\item[(1)] When $p=q$, both the numerator and the denominator in \eqref{eq:efficiencyKNN} is zero, and the result does not apply (recall discussion in Remark \ref{rm:efficiency_formula}). 

\item[(2)] When $\frac{\int \eta_1(z) f(z|\theta_1) \mathrm dz}{(\int \eta_0(z) f(z|\theta_1) \mathrm dz)^2}=\frac{1}{2}$, the denominator in \eqref{eq:efficiencyKNN} is zero.  This happens, when $K_N=N-o(N)$, that is, the graph $\cN_{K_N}(\cV_N)$ is `nearly complete' (has ${N \choose 2}-o(N^2)$ edges) and $\eta_0(z)=1$, $\eta_1(z)=\frac{1}{2}$, for all $z \in \R^d$. This is expected because in the extreme case where $K_N=N-1$, $\cN_{K_N}(\cV_N)$ is the complete graph and the statistic \eqref{graph2test} is non-random and, hence,  powerless. 
\end{enumerate}

Proposition \ref{ppn:knn} has several interesting consequences. To begin with, note that when $K=O(1)$ is fixed, then $\eta_0(z)$ does not depend on $z$ and the RHS of \eqref{eq:efficiencyKNN} is zero, as shown earlier in Theorem \ref{POWERSTABILIZE}. 
The situation, however, is different when $K=K_N \rightarrow \infty$ grows with $N$. 
Even though the exact dependence of $d(z, \cN_{K_N}(V_N))$ on $z$ and $K$ appears to be quite delicate, the RHS in \eqref{eq:efficiencyKNN} is expected to be non-zero, when $K=K_N \rightarrow \infty$ sufficiently fast, for instance,  when $K_N \asymp N^{\alpha}$, for some $\alpha \in (0, 1]$. This is validated by the simulation results in Section \ref{examples}, where we observe that the local power of the $K$-NN test increases with $K$, and eventually dominates other parametric and non-parametric tests. This makes the $K$-NN test (when $K$  grows with $N$) desirable, both theoretically (non-zero Pitman efficiency) and in applications (easy computation and good finite sample power).

\subsection{Cross-Match (CM) Test}

Let $\cW$ be the minimum non-bipartite matching (NBM) graph functional as in Definition~\ref{mdm}. It is unknown  whether $\cW$ is stabilizing \cite{py}, and so, Theorem~\ref{POWERSTABILIZE} cannot be applied to compute the asymptotic efficiency of the cross-match test. However, in this case, Corollary~\ref{ASYMEundirected} can be used directly to derive the following:

\begin{cor}\label{mieq:BNatchae}
Let $\{\P_\theta\}_{\theta\in \Theta}$ be a parametric family of distributions in $\R^d$ satisfying~Assumption \ref{dist4} at $\theta=\theta_1\in \Theta$. Then the asymptotic efficiency of the cross-match (CM) test~\eqref{Tmdm}, under the Euclidean distance, for the testing problem~\eqref{localpower2}, is zero, that is, $\mathrm{AE}(\cW)=0$.
\end{cor}

\begin{proof}Let $N$ be even and $\cV_N=\{V_1, V_2, \ldots, V_N\}$ i.i.d. $\P_{\theta_1}$. In this case, $|E(\cW(\cV_N))|=N/2$ and $|T_2(\cW(\cV_N))|=0$. Therefore, $(\cW, \P_{\theta_1})$ satisfies the $(2, 0)$-undirected variance condition~\eqref{gamma0gamma1}. The normality condition N1 holds, since $d(V_i, \cW(\cV_N))=1$ for all $V_i\in \cV_N$. This also implies that the undirected covariance condition holds with the constant function $\lambda(z)=2$. The result then follows by~Corollary~\ref{ASYMEundirected}.
\end{proof}

\subsection{Depth-Based Tests}
\label{depthtest}

Let $\cZ_N=\sX_{N_1}\cup\sY_{N_2}$ be the pooled sample, and $F_{N_1}$ the empirical distribution of $\sX_{N_1}$. The two sample test based on a depth function $D$~\eqref{QFGsample} rejects for large values of $|T(\sG_D(\cZ_N))|$, where the graph $\sG_D(\cZ_N)$ has vertex set $\cZ_N$ with a directed edge $(Z_i, Z_j)$ whenever $D(Z_i, F_{N_1})\leq D(Z_j, F_{N_1})$. If the depth function  $D(X, F)$, where $X\sim F$, has a continuous distribution then $\sG_D(\cZ_N)$ is a complete graph 
($|E(\sG_D(\cZ_N))|=N(N-1)/2$) with directions on the edges depending on the relative ordering of the depth of the two end-points.

\begin{defn}\label{depthconditions} Let $F$ be a distribution function in $\R^d$ with empirical distribution function $F_N$.  A depth function $D$ is said to be {\it good with respect} to $F$ if
\begin{itemize}
\item[(A1)] For $X\sim F$, the distribution of $D(X, F)$ is continuous.

\item[(A2)] $\P(y_1 \leq D(Y, F)\leq y_2)\leq C|y_1-y_2|$, for some constant $C$ and any $y_1, y_2 \in [0, 1]$.

\item[(A3)] $\sup_{x\in \R^d} |D(x, F_N)-D(x, F)|=o(1)$ almost surely and in expectation.
\end{itemize}
\end{defn}

The standard depth functions, like the one discussed in Section \ref{s2depth}, satisfy the above conditions, for any continuous distribution function $F$. The following result gives the asymptotic efficiencies of tests based on such depth functions.  To this end, let $\{\P_\theta\}_{\theta\in \Theta}$ be a parametric family of distributions in $\R^d$ satisfying~Assumption \ref{dist4} at $\theta=\theta_1\in \Theta\subseteq \R^p$. Moreover, let $F_{\theta_1}$ be the distribution function of $\P_{\theta_1}$. 

\begin{thm}\label{POWERDEPTH}
The asymptotic efficiency of the two-sample test~\eqref{QFGsample} based on a good depth function $D$ (with respect to $F_{\theta_1}$), for the testing problem~\eqref{localpower2}, is 
\begin{equation}\label{eq:pdepth}
\mathrm{AE}(\sG_D)=-\sqrt{6 r} \int \langle h, \grad f(x|\theta_1)\rangle R(x, F_{\theta_1}) \mathrm dx,
\end{equation}
where $R(x, F_{\theta_1})$ is as defined in~\eqref{outlyingness}.
\end{thm}

\subsection{The Chen-Friedman Test} Recently, Chen and Friedman \cite{chenfriedman} proposed  a modification of the test statistic \eqref{2sampleG1}, which improves upon the finite sample power of the FR and the $K$-NN tests, especially when sample size is small and dimension is large. To this end, given an undirected graph functional $\sG$ and $b\in \{1, 2\}$, define
\begin{align}
\cR_b(\sG(\cZ_N))=&\sqrt N\left\{T_b(\sG(\cZ_N))-\E(T_b(\sG(\cZ_N))) \right\},
\label{R12}
\end{align}
where 
\begin{align}\label{T12}
T_b(\sG(\cZ_N)):=\frac{\sum_{1\leq i < j\leq N} \psi_b(c_i, c_j) \pmb 1\{(Z_i, Z_j)\in E(\sG(\cZ_N))\}}{|E(\sG(\cZ_N))|},
\end{align}
with $\psi_b(c_i, c_j)=\pmb1\{c_i=c_j=b\}$. Note that $T_1(\sG(\cZ_N))$ (respectively $T_2(\sG(\cZ_N))$)  is the number of edges in $\sG(\cZ_N)$ within sample 1 (respectively sample 2). Moreover, denote by $\Lambda_N$ the variance-covariance matrix of $(\cR_1(\sG(\cZ_N)), \cR_2(\sG(\cZ_N)))^t$ given the pooled data $\cZ_N$. The {\it Chen-Friedman} (CF) test statistic is defined as  
\begin{align*}
S(\sG(\cZ_N))=
\begin{pmatrix}
\cR_1(\sG(\cZ_N)) & \cR_2(\sG(\cZ_N))
\end{pmatrix}\Lambda_N^{-1} \begin{pmatrix}
\cR_1(\sG(\cZ_N)) \\
\cR_2(\sG(\cZ_N))
\end{pmatrix},
\end{align*}
whenever the matrix $\Lambda_N$ is invertible. Chen and Friedman \cite[Theorem 5.1.1]{chenfriedman} showed that under the null $H_0$ $S(\sG(\cZ_N)) \dto \chi_2^2$, a chi-squared distribution with 2 degrees of freedom, therefore, the test function with rejection region $\{S(\sG(\cZ_N)) > \chi_{2, 1-\alpha}^2\}$ is asymptotically size $\alpha$ for \eqref{test}, where $\chi_{2, 1-\alpha}^2$ is the $(1-\alpha)$-th quantile of the chi-squared distribution with 2 degrees of freedom. 

We can derive the efficiency of the CF-test for a general graph functional $\sG$, using techniques similar to the proof of Theorem \ref{ASYMEundirected}. To this end, recall that for $\theta \in \R^d$ and $Z \sim N(\theta, \mathrm I)$, $Z^tZ\sim \chi^2_d(\theta^t\theta)$, the non-central chi-squared distribution with $d$ degrees of freedom and non-centrality parameter $\theta^t\theta$. 
The theorem is proved in Appendix \ref{sec:pffrnew}. 

\begin{thm}\label{thm:frnew}Let $\sG$ be an undirected graph functional and $\{\P_\theta\}_{\theta\in \Theta}$ a parametric family of distributions in $\R^p$ satisfying~Assumption \ref{dist4} at $\theta=\theta_1\in \Theta$.  If the pair $(\sG, \P_{\theta_1})$ satisfies Assumption \ref{varundirectedcond}, then  limiting power of the CF test, for the testing problem \eqref{localpower2}, is given by 
$$\lim_{N \rightarrow \infty} \P_{\theta_1+\frac{h}{\sqrt N}}(S(\sG(\cZ_N)) > \chi_{2, 1-\alpha}^2)=\P(\chi_2(\mu^t\Lambda^{-1}\mu)> \chi_{2, 1-\alpha}^2),$$
where 
$$\mu= \begin{pmatrix}
-p^2q \int \langle h, \grad  f(z|\theta_1)\rangle \lambda(z)\mathrm d z \\ 
pq^2 \int \langle h, \grad  f(z|\theta_1)\rangle \lambda(z)\mathrm d z
\end{pmatrix}, \quad
\Lambda=\begin{pmatrix}
\lambda_{11} & \lambda_{12} \\
\lambda_{12} & \lambda_{22} \\
\end{pmatrix},$$
with $\lambda_{11}:=p^2((1-p^2) \gamma_0  + r(\gamma_1 -2) )$, $\lambda_{12}:= -p^2 q^2 ( \gamma_0  + 2 \gamma_1 -8)$, and $\lambda_{22}:=q^2 ((1-q^2) \gamma_0  + r(\gamma_1 -2) )$, whenever $\Lambda$ is invertible.
\end{thm}

\section{Finite Sample Local Power}\label{examples} 

In this section, we compare the power of the different tests against local alternatives in  simulations. The CF test is computed using the {\tt R} package {\tt gTests} and the depth-functions are computed using the package {\tt fda.usc}. Throughout our simulations the level of significance is set at $\alpha=0.05$.

 
\begin{figure*}[h]
\begin{minipage}[l]{0.495\textwidth}
\centering
\small
\begin{tabular}{c||ccccc}
\hline
Dimension & FR & HD & MD & CF & $T^2$   \\
\hline
4 & 0.11 & 0.06 & 0.03 & 0.04 & 0.35  \\
10 &  0.06 & 0.04 & 0.04 & 0.03 & 0.49  \\
20 & 0.1 & 0.05 & 0.04 & 0.04 & 0.68  \\
30 & 0.08 & 0.11 & 0.06 & 0.07 & 0.88  \\
50 & 0.09 & 0.07 & 0.03 & 0.06 & 0.96 \\
100 & 0.07 & 0.06 & 0.04  & 0.09 & 1 \\
200 & 0.15 & 0.08 & 0.08 & 0.09 & 1 \\
300 & 0.22 & 0.04 & 0.11 & 0.13 & 1 \\
\hline
\end{tabular}\\
\vspace{0.1in}
\small{(a)}
\end{minipage}
\begin{minipage}[c]{0.495\textwidth}
\centering
\includegraphics[width=2.2in]
    {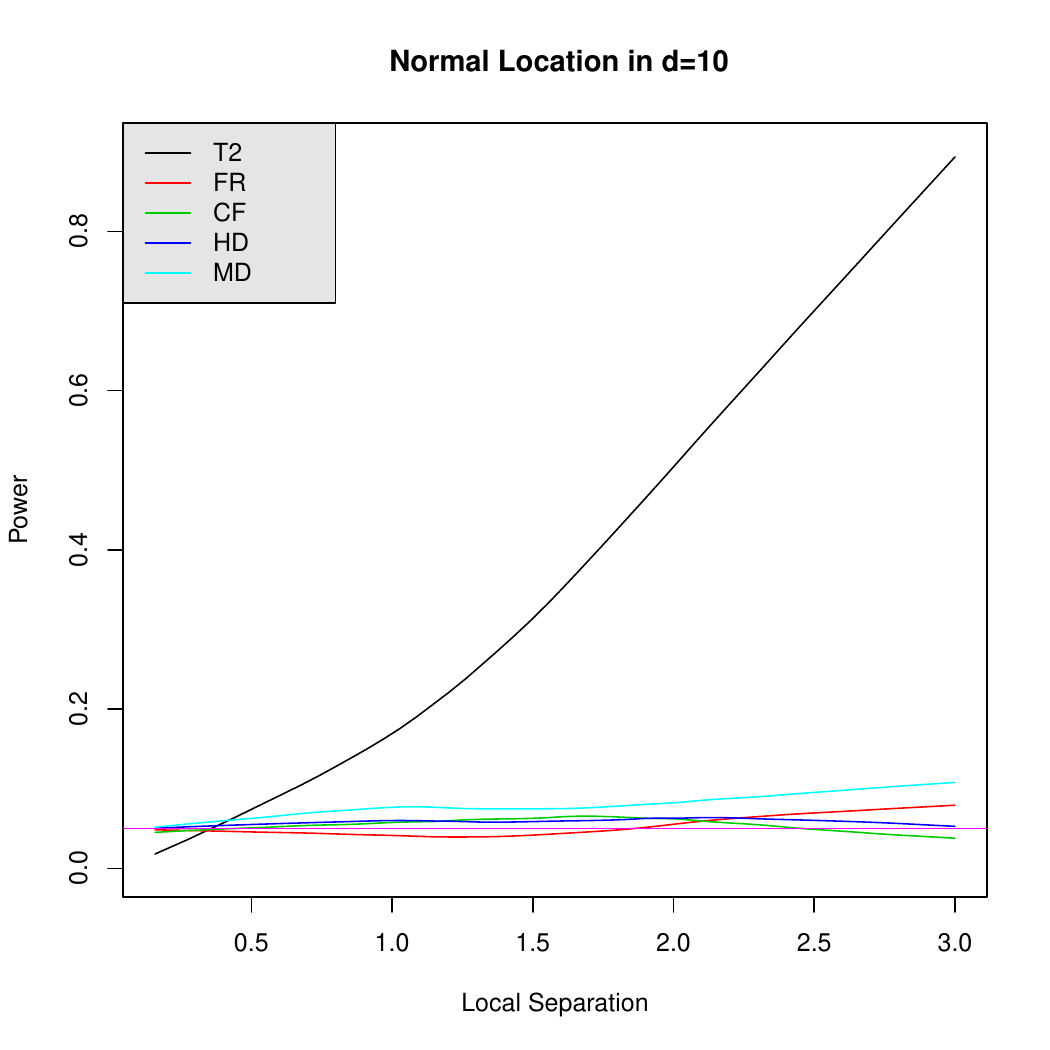}\\
\small{(b)}
\end{minipage}
\caption{\small{Power of the different tests for the normal location problem: (a) across increasing dimensions when the respective means differ by $2 \cdot \bm 1/\sqrt N$, and (b) in dimension $d=10$ when the means differ by $\delta \cdot \bm 1/\sqrt N$, as a function of $\delta$.}}
\label{nlocation}
\end{figure*}

\normalsize

\begin{example}[Normal Location]
\label{nlocationex}
Consider the parametric family $\P_{\theta}\sim N(\theta, \mathrm I)$, for $\theta \in \R^d$. The table in Figure~\ref{nlocation}(a) shows the empirical power (out of 100 repetitions) of the FR-test based on the MST, the test based on halfspace depth (HD), the test based on the Mahalanobis depth (MD), the CF test based on the MST, and the Hotelling's $T^2$ test, with $N_1=1000$ samples from $\P_{0}$ and  $N_2=500$  samples from $\P_{\frac{2 \cdot \pmb 1}{\sqrt N}}$, across increasing dimensions. (Here, $N=N_1+N_2=1500$.) The plot in Figure~\ref{nlocation}(b)  shows the empirical power (out of 100 repetitions) in dimension $d=10$ of these  tests, based on $N_1=1000$ samples from $\P_{0}$ and $N_2=500$ samples from $\P_{\frac{\delta \pmb 1}{\sqrt N}}$, over a grid of 20 values of $\delta$ in $[0, 3]$ (smoothed out using the \texttt{loess} function in \texttt{R}). The table and the plot show that the Hotelling's $T^2$-test, which is the most powerful test in this case, has the highest power. The power of the FR-test and CF-test improve slightly dimension, but is generally low, as predicted by the results above. In this case, the tests based on depth functions (the HD test and the MD test) also have low power (see Remark \ref{ex:depthlocation}). 
\end{example}

\begin{example}[Spherical Normal] \label{nscaleex}
Consider the parametric family $\P_{\sigma}\sim N(0, \sigma^2\mathrm I)$, for $\sigma > 0$. As before, the table in Figure~\ref{nscale}(a) shows the empirical power (out of 100 repetitions) of the different tests based on $N_1=1000$ samples from $\P_{0}$ and  $N_2=500$  samples from $\P_{1+ \frac{2}{\sqrt N}}$ across increasing dimensions, and  the plot in Figure~\ref{nlocation}(b)  shows the empirical power (out of 100 repetitions) in dimension $d=10$ of the different tests, based on $N_1=1000$ samples from $\P_{0}$ and $N_2=500$ samples from $\P_{1+\frac{\delta}{\sqrt N}}$, over a grid of 20 values of $\delta$ in $[0, 3]$. Here, the HD test performs very well across dimensions. The MD test also performs well for small to moderate dimensions, but starts to lose power for higher dimensions. On the other hand, the power of the FR and the CF tests are small in low dimension, however, quite interestingly, the power increase substantially with dimension, paralleling the HD test, with the CF test generally more powerful than the FR test.  This supports the findings in \cite{chenfriedman} where the FR and CF tests also exhibit high power as dimension increases, in finite-sample simulations. It is phenomenon like this that makes tests based on geometric graphs, such as the FR test and the CF test, particularly attractive for modern statistical applications. This remarkable {\it blessing of dimensionality}, can be mathematically explained as follows: Even though tests based on geometric graphs have no power in the $O(N^{-\frac{1}{2}})$ scale, the detection threshold of these tests for the spherical normal problem (and more general scale alternatives) is expected to be around $\Theta(N^{-\frac{1}{2}+\frac{1}{d}})$. (This has been proved recently by the author \cite{BBBG2} for the test based on the $K$-NN graph.) Note  that this threshold gets closer and closer to the parametric detection rate of $N^{-\frac{1}{2}}$ as $d$ increases, and, as a result, these tests attain high power as dimension increases for scale problems.


\begin{figure*}[h]
\centering
\begin{minipage}[l]{0.52\textwidth}
\centering
\small
\begin{tabular}{c||ccccccc}
\hline
Dimension & FR & HD & MD & $T^2$ & CovTest  & CF  \\
\hline
4 &  0.07 & 0.16  & 0.2  & 0.03 & 0.05 & 0.03  \\
10 & 0.07 & 0.30 & 0.45 & 0.12 & 0.11 & 0.12  \\
20 & 0.24 & 0.59 & 0.81 & 0.06 & 0.04 & 0.28  \\
30 & 0.34 & 0.76 & 0.92 & 0.08 & 0.18 & 0.36  \\
50 & 0.59 & 0.91 & 0.99 & 0.09 & 0.16 & 0.65  \\
100 & 0.72 & 1    & 1& 0.22 & 0.13 & 0.87  \\
200 & 0.95 & 1    & 1 & 0.51 & 0.05 & 0.98 \\
300 & 1 & 1    & 0.71 & 0.85 & 0.08 & 0.99 \\
\hline
\end{tabular}\\
\vspace{0.1in}
\small{(a)}
\end{minipage}
\begin{minipage}[c]{0.47\textwidth}
\centering
\includegraphics[width=2.2in]
    {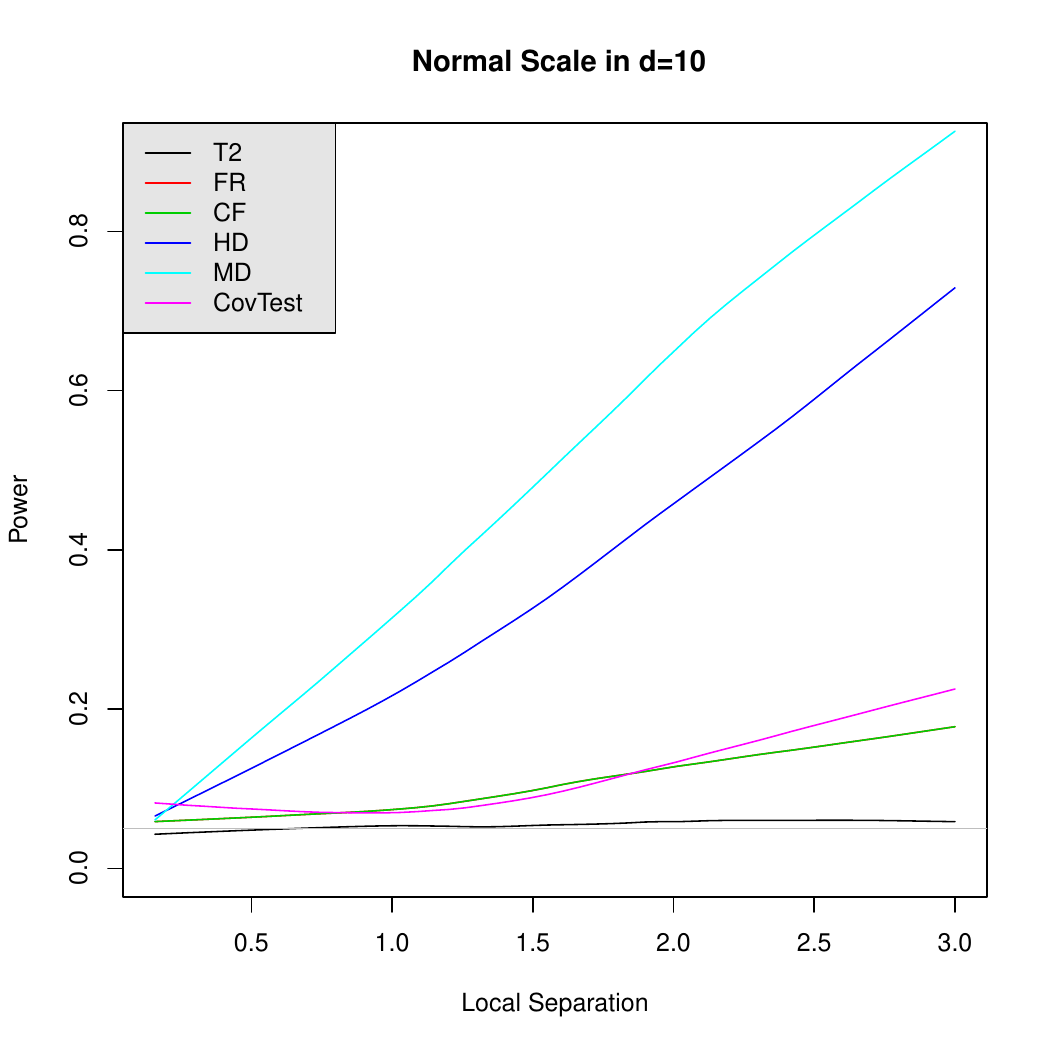}\\
\small{(b)}
\end{minipage}
\caption{\small{Power of the different tests for the normal scale problem: (a) across increasing dimensions when the standard deviations differ by $2/\sqrt N$, and (b) in dimension $d=10$ when the standard deviations differ by $\delta/\sqrt N$, as a function of $\delta$.}}
\label{nscale}
\end{figure*}

\normalsize

The table and the plot also show the power of the Hotelling $T^2$ test which, as expected, performs poorly for the scale problem, and the {\tt CovTest}, which is the parametric  likelihood ratio test for testing the equality of two normal covariance matrices. This rejects for large values  $N\log |\hat \Sigma_0|-{N_1}\log |\hat \Sigma_1|-N\log |\hat \Sigma_2|$, where $\hat \Sigma_0$, $\hat \Sigma_1$, and $\hat \Sigma_2$ are the maximum likelihood estimators of the covariance matrix of the whole data, the sample $\sX_{N_1}$, and the sample $\sY_{N_2}$, respectively. The  {\tt CovTest} performs quite poorly, as already observed in \cite{chenfriedman,fr}, which is expected because it has to estimate an increasing number of parameters as dimension increases and does not take into account the spherical structure of the covariance matrix. 
\end{example}

\begin{example}[Lognormal Location]
\label{lognormalex}
Consider the parametric family  $\P_{\theta}\sim \exp(N(\theta, \mathrm I))$ for $\theta \in \R^d$, where the exponent is taken coordinatewise. As before, the table in Figure~\ref{lognormal}(a) shows the empirical power (out of 100 repetitions) of the different tests based on $N_1=1000$ samples from $\P_{0}$ and  $N_2=500$  samples from $\P_{\frac{2 \cdot \bm 1}{\sqrt N}}$ across increasing dimensions, and  the plot in Figure~\ref{nlocation}(b)  shows the empirical power (out of 100 repetitions) in dimension $d=10$ of the different tests, based on $N_1=1000$ samples from $\P_{0}$ and $N_2=500$ samples from $\P_{\frac{\delta \pmb 1}{\sqrt N}}$, over a grid of 20 values of $\delta$ in $[0, 3]$. Changing the normal mean changes the lognormal distribution both in location and scale. In this case, the HD test is powerless (see Remark \ref{lognormal_halfspace}), but the test based the Mahalanobis depth (MD) performs very well, outperforming the Hotelling's $T^2$ when dimension increases. The FR and the CF tests have low power in small dimensions, but the power improves with dimension, for reasons similar to that in the spherical normal problem (recall Example \ref{nscaleex} above). 
\end{example}

\begin{figure*}[h]
\centering
\begin{minipage}[l]{0.495\textwidth}
\centering
\small
\begin{tabular}{c||ccccc}
\hline
Dimension & FR & HD & MD & CF & $T^2$   \\
\hline
4 & 0.05 & 0.02 & 0.11 & 0.06 & 0.17  \\
10 &  0.05 & 0.03 & 0.34 & 0.03 & 0.24  \\
20 & 0.08 & 0.09 & 0.45 & 0.08 & 0.48 \\
30 & 0.19 & 0.07 & 0.65 & 0.13 & 0.51  \\
50 & 0.37 & 0.06 & 0.83 & 0.47 & 0.81 \\
100 & 0.58 & 0.07 & 0.91 & 0.58 & 0.94 \\
200 & 0.74 & 0.07 & 1 & 0.61 & 1 \\ 
300 & 0.73 & 0.05 & 1 & 0.53 & 1 \\ 
\hline
\end{tabular}\\
\vspace{0.1in}
\small{(a)}
\end{minipage}
\begin{minipage}[c]{0.495\textwidth}
\centering
\includegraphics[width=2.2in]
    {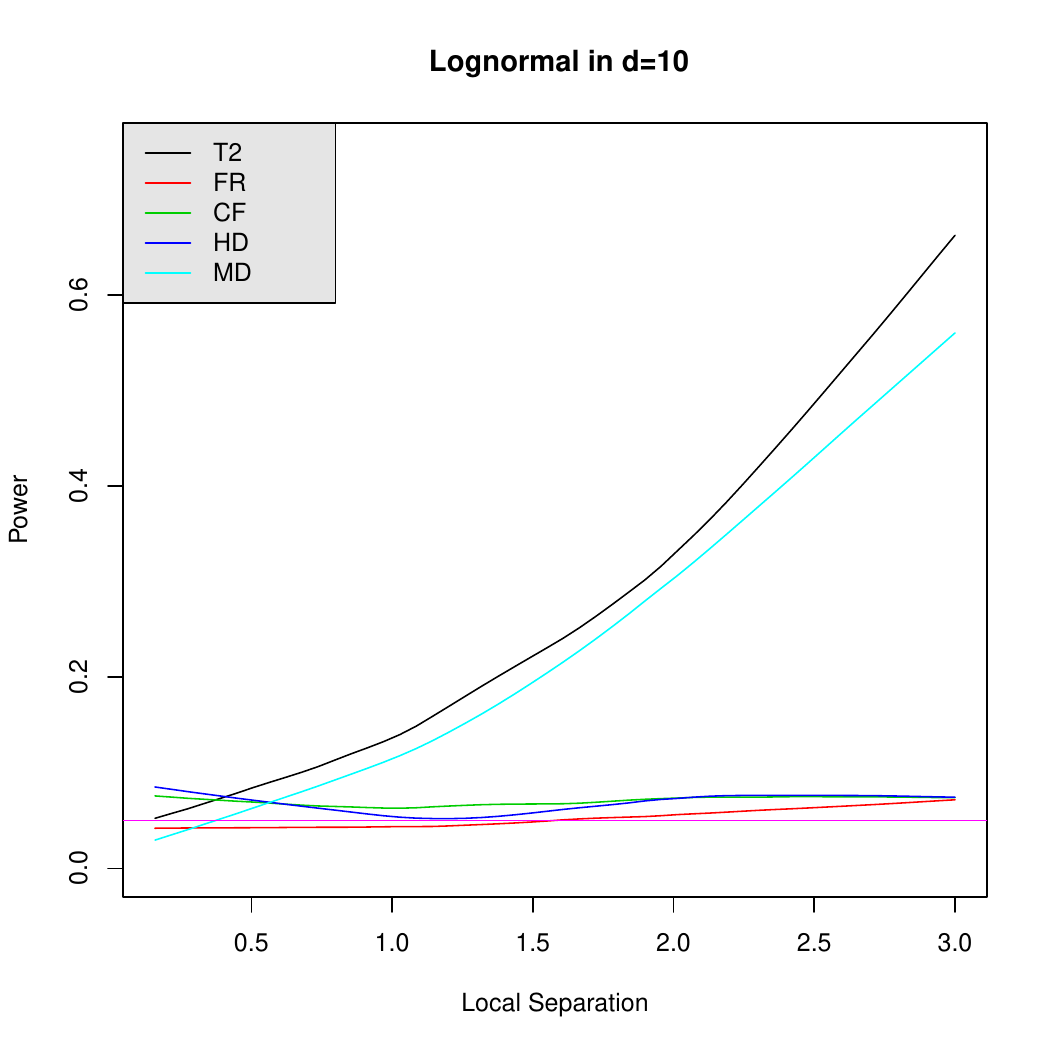}\\
\small{(b)}
\end{minipage}
\caption{\small{Power of the different tests for the log-normal location-scale problem: (a) across increasing dimensions when the means of the respective normals differ by $2 \cdot \bm 1/\sqrt N$, and (b) in dimension $d=10$ when the means of the respective normals differ by $\delta \cdot \bm 1/\sqrt N$, as a function of $\delta$.}} 
\label{lognormal}
\end{figure*}

The examples above show that when the dimension is large, the FR and the CF tests can effectively detect two distributions, unless the alternative is location-only. Moreover, both  these tests can be computed very efficiently, which makes them especially useful in applications. Moreover, Proposition \ref{ppn:knn} suggests that the test based on the $K$-NN graph can be powerful against $O(N^{-\frac{1}{2}})$ alternatives, when $K$ grows with $N$ sufficiently fast. We illustrate this result in the following example.

\begin{example}\label{knnexample} (Dependence on $K$ in the $K$-NN Test) To understand how the power of the $K$-NN depends on $K$, we consider the lognormal location family $\P_\theta\sim \exp(N(\theta, \Sigma))$, where $\theta \in \R^{10}$ and $\Sigma$ is known.  

\begin{itemize}

\item[(a)] Figure \ref{lognormal_nn}(a) shows the empirical power (out of 100 repetitions) in the independent case ($\Sigma=\mathrm I$) of the $K$-NN test for various values of $K$, and the power of the Hotelling's $T^2$ test, the HD test, and the MD test, based on $N_1=1000$ samples from $\P_0$, and $N_2=800$  samples from $\P_{\frac{\delta  \bm 1}{\sqrt N}}$, over a grid of 20 values of $\delta$ in $[0, 3]$.  For small values of $K$, the $K$-NN has low power. However, as $K$ increases, the power increases, and eventually it dominates all the other tests.

\item[(b)] Figure \ref{lognormal_nn}(b) shows the empirical power of the different tests when $\Sigma=\mathrm I+\bm 1 \bm 1'$, a rank 1-perturbation of the identity matrix. In this case, the coordinates of the lognormal are dependent. In this case, even though the overall power of all the tests is much lower, the $K$-NN dominates all the other tests, for $K$ large enough.

\end{itemize}

More simulations showing the power of the $K$-NN test are given in Appendix \ref{sec:knnexperiments}.  These experiments show that the $K$-NN test is powerful against local alternatives when $K$ grows with $N$ (especially when $K=\alpha N$, for some $\alpha \in (0, 1)$), which supports the result in Proposition \ref{ppn:knn} and illustrates the advantage of using dense geometric graphs. Note that although the computation cost for the $K$-NN test increases with $K$, it is always polynomial in $N$, $K$, and the dimension $d$, making it far more efficient than tests based on depth functions. This makes the $K$-NN test desirable, both theoretically (non-trivial
efficiency) and in applications (easy computation and good finite sample power). 

\end{example}

\begin{figure*}[h]
\centering
\begin{minipage}[c]{0.495\textwidth}
\centering
\includegraphics[width=2.2in]
    {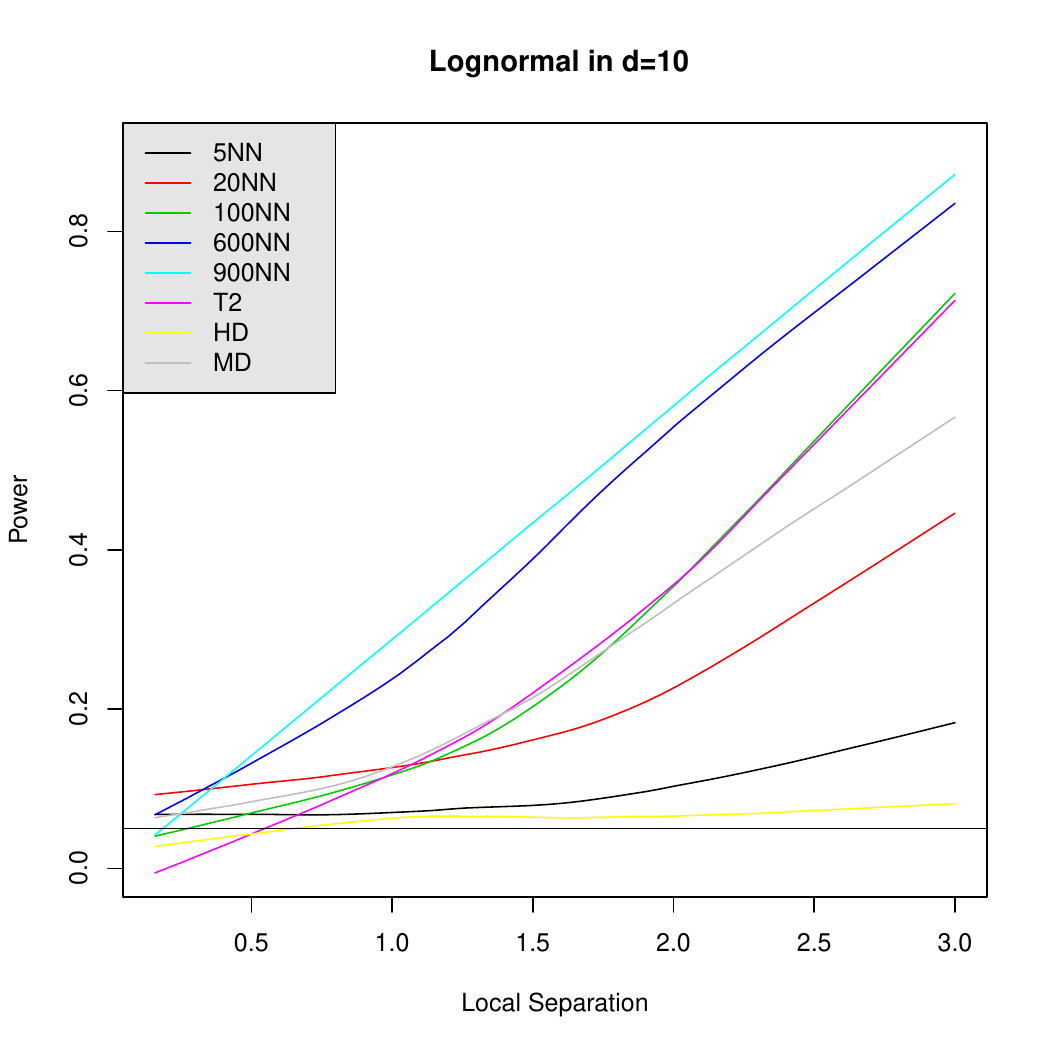}\\
\small{(a)}
\end{minipage}
\begin{minipage}[c]{0.495\textwidth}
\centering
\includegraphics[width=2.2in]
    {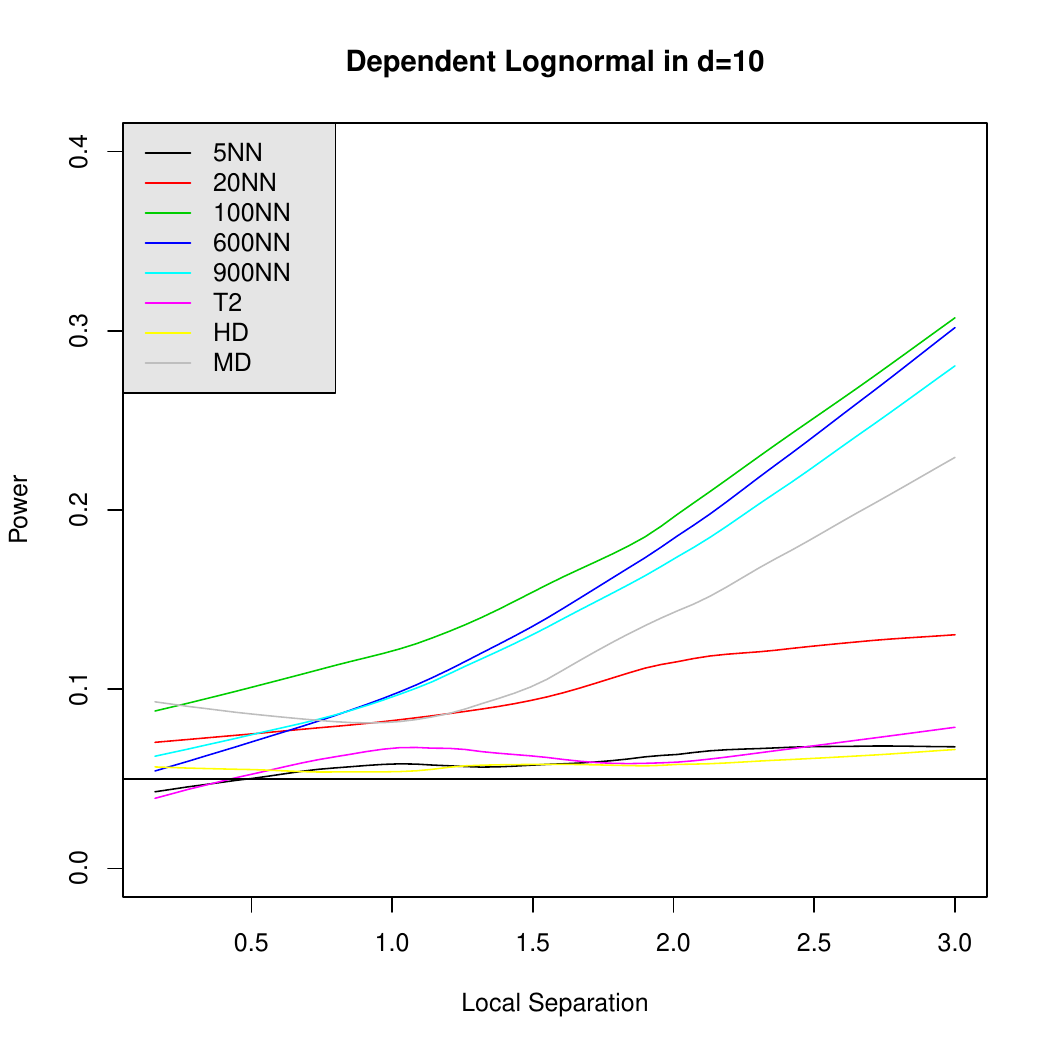}\\
\small{(b)}
\end{minipage}
\caption{\small{Power in the log-normal family in dimension 10 where the means of the respective normals differ by $\delta \cdot \bm 1/\sqrt N$, as a function of $\delta$. In (a) $\P_{\theta}\sim \exp(N(\theta, \mathrm I))$, the lognormal has independent coordinates, and in (b) $\P_{\theta}\sim \exp(N(\theta, \mathrm \Sigma))$, where $\Sigma=\mathrm I + \bm 1 \bm 1'$, the coordinates are dependent.}} 
\label{lognormal_nn}
\end{figure*}

\section{Application to Sensorless Drive Diagnosis Data Set}
\label{drivediag}

In this section we compare the performances of the tests based on the MST~\eqref{Tfr} and the halfspace depth (HD) \eqref{halfspace} on the sensorless drive diagnosis data set.\footnote{The data can be freely downloaded from the University of California, Irvine's \href{https://archive.ics.uci.edu/ml/datasets/Dataset+for+Sensorless+Drive+Diagnosis}{machine learning repository}.}  This dataset was used by Bayer et al. \cite{sensordriving} for sensorless diagnosis of an autonomous electric drive train, which is composed of a synchronous motor and several attached components, like bearings, axles, and a gear box. Damage to the drive causes severe disturbances and increases the risk of encountering breakdown costs. Monitoring the condition for such applications usually require additional sensors. Sensorless drive diagnosis instead directly uses the phase currents of the motor for determining the performance of the entire drive unit.

\begin{table}[h]

\begin{center}
\scriptsize
\begin{tabular}{c|ccccccc}
\hline
 \multirow{1}{*}{ Pairs }  & Test & PCA1 & PCA2  &  PCA3 & PCA48   \\
\hline
\multirow{2}{*}{(1, 2)}& HD & $1.498 \times 10^{-14}$ & $1.877\times 10^{-14}$ & $1.204 \times 10^{-12}$ & 0\\
 & MST   & 0.055 & 0.0256 &  $8.766 \times 10^{-5}$ & $1.615 \times 10^{-7}$ \\
\hline
\multirow{2}{*}{(1, 6)} & HD & $2.903 \times 10^{-11}$ & $9.47 \times 10^{-5}$ & 0.0011 &  $5.598\times 10^{-8}$\\
& MST & 0.397 & 0.618 & 0.124 & $8.85 \times 10^{-9}$\\
\hline
\multirow{2}{*}{(2, 6)} & HD & 0.038 & 0.266 & 0.166 &  0.002 \\
 & MST & 0.676 & 0.481 & 0.457 & 0.0005 \\
\hline
\multirow{2}{*}{(4, 9)} & HD & $1.366\times 10^{-7}$ & 0.0117 & 0.3743 &  $3.577\times 10^{-9
}$\\  
 & MST &   0.374 & 0.841 & 0.222  & $1.345\times 10^{-5}$ \\
\hline
\end{tabular}
\end{center}
\caption{\small{The asymptotic $p$-values of the two-sample tests based on the halfspace depth (HD) and the FR-test (MST) for the sensorless drive diagnosis data set.}}
\label{pdrive}
\end{table}

\normalsize

In the data collected, the drive train had intact and defective components, and current signals were measured  with a current probe and an oscilloscope on two phases under 11 different operating conditions, this means by different speeds, load moments and load forces. Thus, the dataset consists of 11 different classes, each class consists of 5319  data points of dimension 48. The 48 features were extracted using empirical mode decomposition (EMD) of the measured signals. The first three intrinsic mode functions (IMF) of the two phase currents and their residuals (RES) were used and broken down into sub-sequences. For each of this sub-sequences, the statistical features mean, standard deviation, skewness and kurtosis were calculated.

In order to detect the defects two-sample tests were performed on the $55={11\choose 2}$ pairs of data sets. The goal is to investigate which of the tests can successfully detect the defect, that is, reject the null hypothesis. Table~\ref{pdrive} shows the $p$-values of the tests based on the MST and the halfspace depth (computed using  the {\tt mdepth.TD} function in the {\tt R} package {\tt fda.usc}) for 4 such pairs.  For demonstrative purposes the data was projected onto the first principal component (PCA1), the first two principal components (PCA2), the first three principal components (PCA3), and PCA48 is the whole data set. The results show that in all the 4 pairs, the HD test has smaller $p$-values than the MST test for the first three principal components. In particular, the HD test performs significantly better in 1-dimension (PCA1). Even for higher principal components the HD test rejects at the 5\% level more often than the MST, illustrating that it is more  sensitive to detecting local changes compared to the MST, as shown in Sections~\ref{MSTAE} and~\ref{s2depth}. Both tests reject at the 5\% level for the whole data set, supporting the hypotheses that sensorless drive diagnosis is possible for the 4 pairs of defects considered above.

\section{Finite Sample Power in the High-Dimensional Regime}
\label{sec:hd}

We conclude with a simulation study of the finite sample power of the tests described above, when the sample size is comparable to the dimension. In this high dimensional regime the performance of the graph-based tests are quite different. We illustrate the performance of the various tests in  the 3 examples from Section \ref{examples}. As before, the level of significance is set at $\alpha=0.05$.

\begin{figure}
\begin{minipage}[l]{0.495\textwidth}
\centering
\scriptsize
\begin{tabular}{c||ccccc}
\hline
Dimension & FR & HD & MD & CF & $T^2$   \\
\hline
10 & 0.12 & 0.06 & 0.05 & 0.05 & 0.46  \\
30 &  0.28 & 0.09 & 0.05 & 0.19 & 0.7  \\
50 & 0.24 & 0.12 & -- & 0.19 & 0.78  \\
70 & 0.41 & 0.21 & -- & 0.31 & 0.72  \\
100 & 0.4 & 0.18 & -- & 0.39 & -- \\
\hline
\end{tabular}\\
\vspace{0.05in}
\small{(a)}
\end{minipage}
\begin{minipage}[c]{0.495\textwidth}
\centering
\scriptsize
\begin{tabular}{c||cccccc}
\hline
Dimension & FR & HD & MD & $T^2$ & CovTest & CF   \\
\hline
10 &  0.05 & 0.2 & 0.37 & 0.06 & 0.01 & 0.21  \\
30 & 0.17 & 0.67 & 0.28 & 0.09 & 0.06 & 0.36  \\
50 & 0.14 & 0.85 & -- & 0.05 & -- & 0.57 \\
70 & 0.17 & 0.96 & --  & 0.06 & -- & 0.61 \\
100 & 0.29 & 0.99 & -- & -- & -- & 0.84 \\
\hline
\end{tabular}\\
\vspace{0.05in}
\small{(b)}
\end{minipage}
\caption{\small{Power of the different tests across increasing dimensions with samples sizes $N_1=60$ and $N_2=40$ for (a) the normal location problem with mean difference $0.2 \cdot \bm 1$, and (b) the spherical normal problem with scale difference 0.2.}}
\label{fig:HD_I}
\end{figure}

\begin{enumerate}

\item[(1)] $\P_{\theta}\sim N(\theta, \mathrm I)$, for $\theta \in \R^d$: The table in Figure \ref{fig:HD_I}(a) shows the empirical power (out of 100 repetitions) of the FR-test based on the MST, the HD test, the MD test, the CF test based on the MST, and the Hotelling's $T^2$ test, with $N_1=60$ samples from $\P_{0}$ and  $N_2=40$ samples from $\P_{\frac{2 \cdot \pmb 1}{\sqrt N}}$ (here $N=N_1+N_2=100$), for dimensions 10, 30, 50, 70, and 100. Here, the parametric Hotelling's $T^2$ test has the highest power for dimensions up to 70, but is degenerate for dimension $d=100$. The HD test has low power across across dimensions. The MD test is powerless in low dimensions and degenerate for higher dimensions. Among the non-parametric tests, the FR test and the CF test are the most powerful as dimension increases. For instance, in dimension $d=100$, the FR and the CF tests dominate all the other tests.

\item[(2)] $\P_{\sigma}\sim N(0, \sigma^2\mathrm I)$, for $\sigma > 0$:   The table in Figure \ref{fig:HD_I}(b) shows the empirical power (out of 100 repetitions) of the various tests, based on $N_1=60$ samples from $\P_{0}$ and  $N_2=40$  samples from $\P_{1+ \frac{2}{\sqrt N}}$, across different dimensions. As expected, the Hotelling's $T^2$ test and the \texttt{CovTest} have no power in this case. The MD test has reasonable power for low dimensions, but is powerless for dimensions greater than 50. The FR test has reasonable power which increases with dimension. The HD test and the CF test are the two most powerful tests in this case, both of which have power improving with dimension. The HD test is slightly better than the CF test, however, the computation cost of the HD test is much higher.

\begin{figure*}[h]
\begin{minipage}[l]{1.0\textwidth}
\centering
\scriptsize
\begin{tabular}{c||ccccc}
\hline
Dimension & FR & HD & MD & CF & $T^2$   \\
\hline
10 & 0.1 & 0.06 & 0.3 & 0.13 & 0.26  \\
30 &  0.26 & 0.11 & 0.45 & 0.31 & 0.5  \\
50 & 0.4 & 0.12 & -- & 0.34 & 0.59  \\
70 & 0.41 & 0.13 & -- & 0.55 & 0.46  \\
100 & 0.43 & 0.17 & -- & 0.6 & -- \\
\hline
\end{tabular}
\end{minipage}
\caption{\small{Power of the different tests across increasing dimensions with samples sizes $N_1=60$ and $N_2=40$ for log-normal location-scale problem, where the corresponding normal means  differ by $0.2 \cdot \bm 1$.}}
\label{fig:HD_II}
\end{figure*} 

\item[(3)] $\P_{\theta}\sim \exp(N(\theta, \mathrm I))$ for $\theta \in \R^d$, where the exponent is taken co-ordinatewise. The table in Figure~\ref{fig:HD_II}(a) shows the empirical power (out of 100 repetitions) of the various tests, based on $N_1=60$ samples from $\P_{0}$ and  $N_2=40$  samples from $\P_{\frac{2 \cdot \bm 1}{\sqrt N}}$, across different dimensions. Here, the MD test and the HD test are powerless. The FR test has good power as dimension increases, however, the CF test dominates all the other tests for moderate to high dimensions. 
\end{enumerate}

The experiments above show that tests based on geometric graphs, such as the FR and the CF tests, shine in moderate to high dimensions, making these procedures very useful for modern statistical applications.

\section{Discussion}
\label{sec:extgen}

The asymptotic efficiency and finite sample results of the various graph-based two-sample tests obtained above, illustrate the strengths and weaknesses of existing methods, and show us how combinatorial properties of the underlying graph effect the performance of the associated two-sample test, which can help us decide which test to use in practice.

Theoretical results show tests based on sparse geometric graphs are powerless against $O(N^{-\frac{1}{2}})$ alternatives, when the dimension is fixed and the sample size is large. However, these tests exhibit good power in finite sample simulations which improves with increasing dimension, especially if the two distributions differ in scale. This is because the detection thresholds in scale problems for tests based on geometric graphs gets closer and closer to the parametric detection rate of $O(N^{-\frac{1}{2}})$ as dimension increases (recall discussion in Example \ref{nscaleex}). This blessing of dimensionality facilitates the application of these tests in modern statistical problems. Moreover, the asymptotic efficiency these tests can be improved by increasing the density of the underlying geometric graph. For instance, the test based on the $K$-NN graph, where $K$-grows polynomially with $N$, has non-trivial Pitman efficiency (Proposition \ref{ppn:knn}), and often dominates the other tests (both parametric and non-parametric) in finite sample settings. Even though the computational cost increases with $K$, it is still polynomial in the sample $N$, the number of nearest neighbors $K$, and the dimension $d$, which makes the $K$-NN test the frontrunner for practical applications in the fixed dimension, large sample size regime. Another test which perform reasonably well in this regime is the test based on the Mahalanobis depth (MD), however this become computationally unstable in large dimensions.

In case the dimension is comparable with the sample size, the situation is quite different. Here, simulation results in Section \ref{sec:hd} show that tests based on geometric graphs, such the FR and the CF tests, are the overall winner, reinforcing findings in \cite{chenfriedman}. The test based on the halfspace depth (HD) also performs particularly well in detecting pure scale changes, but does not have good power in the other cases. \\

\small{\noindent\textbf{Acknowledgements.} The author is indebted to his advisor Persi Diaconis for introducing him to graph-based tests and for his encouragement and support. The author thanks Ery Arias-Castro, Sourav Chatterjee, Probal Chaudhuri, Jerry Friedman, Shirshendu Ganguly, Anil Ghosh, Susan Holmes, and David Siegmund for helpful comments. The author thanks Hao Chen and Nelson Ray for their help with datasets. The author also thanks the Editor, the Associate Editor, and the anonymous referees for providing many thoughtful comments, which greatly improved the quality of the paper.}

\newpage

\normalsize 

\appendix

\section{Proof of Theorem~\ref{ASYME}}
\label{varcovlimits}

This section describes the proof of Theorem~\ref{ASYME}. To this end, let $\{\P_{\theta}\}_{\theta \in \Theta}$ be a parametric family of distributions satisfying Assumption~\ref{dist4} at $\theta=\theta_1\in \Theta$. Suppose $\sX_{N_1}$ and $\sY_{N_2}$ are i.i.d. samples from $\P_{\theta_1}$ and $\P_{\theta_2}$, respectively, and consider the testing problem~\eqref{localpower2}.  Let $\cZ_{N}=\sX_{N_1}\cup \sY_{N_2}$ be the pooled sample. The log-likelihood ratio for the testing problem (\ref{localpower2}) is:
\begin{equation}
L_{N}:=\sum_{j=1}^{N_2} \log\frac{f\left(Y_j\big|\theta_1+\frac{h}{\sqrt N}\right)}{f(Y_j|\theta_1)}=\sum_{j=1}^N\log \frac{f(Z_j|\theta_1+\frac{h}{\sqrt N})}{f(Z_j|\theta_1)} \pmb 1\{c_{j}=2\},
\label{likelihood}
\end{equation}
where  $c_j$ is the label of $Z_j$ as in~\eqref{graph2z}. Since $\P_{\theta_1}$ is QMD, by Lehmann and Romano \cite[Theorem 12.2.3]{lr}, in the usual asymptotic regime~\eqref{pq}, $L_N \dto N\left(-\frac{q\langle h, \mathrm I(\theta_1)h\rangle}{2}, q\langle h, \mathrm I(\theta_1)h\rangle\right)$, where $\mathcal I(\cdot)$ is the Fisher information matrix.  This implies the joint distributions of $\sX_{N_1}$ and $\sY_{N_2}$ under $H_0$ and $H_1$ (as in~\eqref{localpower2}) are mutually contiguous \cite[Corollary 12.3.1]{lr}. Then by Le Cam's Third Lemma \cite[Corollary 12.3.2]{lr}, if  under the null $H_0$, 
\begin{equation}
\left(
\begin{array}{c}
\cR(\sG(\cZ_N))  \\
L_N    
\end{array}
\right)\dto N \left(\left(
\begin{array}{c}
0\\
-\frac{q\langle h, \mathrm I(\theta_1)h\rangle}{2}
\end{array}
\right),
\left(
\begin{array}{cc}
\sigma_1^2  &  \sigma_{12}  \\
\sigma_{12}  &  q\langle h, \mathrm I(\theta_1)h\rangle
\end{array}
\right)\right),
\label{ll_TN_joint}
\end{equation}
then under the alternative $H_1$, $\cR(\sG(\cZ_N))\dto N(\sigma_{12}, \sigma_1^2)$. Then the limiting power of the two-sample test based on $\sG$ with rejection region \eqref{rejregion} is given by $\Phi(z_\alpha-\frac{\sigma_{12}}{\sigma_1})$, where $\Phi(\cdot)$ is the standard normal distribution function. Therefore, by \eqref{effdefn} the asymptotic efficiency of the test statistic $\cR(\sG(\cZ_N))$ is $\frac{\sigma_{12}}{\sigma_1}$.

The above discussion implies, in order to compute the efficiency of  $\cR(\sG(\cZ_N))$, it suffices to derive the joint distribution of $\cR(\sG(\cZ_N))$ and $L_N$ under the null $H_0$. To begin with, observe that the joint distribution of the pooled sample $\sX_{N_1}\cup \sY_{N_2}$ can be described as follows: Let $\cZ_N=\{Z_1, Z_2, \ldots, Z_N\}$ be i.i.d. from $\P_{\theta_1}$. Select a random subset of size $N_1$ from $[N]:=\{1, 2, \ldots, N\}$ and label its elements 1 and the remaining elements 2. Then the joint distribution of the elements labelled 1 and 2 is the same as the joint distribution of $\sX_{N_1}$ and $\sY_{N_2}$, under the null.  The labels of $\cZ_N$ under the null distribution are dependent, and this often makes computations difficult. A convenient way to circumvent this problem is the {\it bootstrap distribution} of the labelings: Let $\cZ_{N}=\{Z_1, Z_2, \ldots, Z_{N}\}$ be i.i.d. samples from $\P_{\theta_1}$. Assign label $c_i\in \{1, 2\}$ to every element in $\cZ_N$ independently with probability $\frac{N_1}{N}$ or $\frac{N_2}{N}$, respectively. The moments of the statistic~\eqref{2sampleG1} can be easily calculated under the  bootstrap distribution, because of the independence of the labelings. Moreover, the null distribution can be recovered from the bootstrap distribution as follows: Let $B_{N}$ be the {\it bootstrap count}, the number of elements in $\cZ_N$ assigned label 1. Under the bootstrap distribution, the joint distribution of the elements labelled 1 and 2 conditioned on the event $\{B_N=N_1\}$ is precisely the null distribution of $(\sX_{N_1}, \sY_{N_2})$.

The proofs of Theorem~\ref{ASYME} and Corollary~\ref{ASYMEundirected} have several steps, which are organized as follows: 

\begin{itemize}

\item Section~\ref{varlimits} computes the limiting conditional variance of the statistic~\eqref{2sampleG1} under the bootstrap distribution and the variance condition~(Assumption \ref{varcondition}). 

\item The limiting conditional covariance of the statistic~\eqref{2sampleG1} and the log-likelihood ratiounder the bootstrap distribution and the covariance condition~(Assumption \ref{covcondition}) is calculated in Section~\ref{covlimits}. 

\item Details on recovering the null distribution from the joint bootstrap distribution of the statistic, the log-likelihood ratio, and the bootstrap count are given in Section~\ref{bootstrapjoint}. The proofs of Theorem~\ref{ASYME} and Corollary~\ref{ASYMEundirected} are then completed assuming the joint normality (Assumption \ref{normalcondition})

\item The proof of the joint normality of the statistic and the log-likelihood, under the normality condition N1 and N2, are given later in Section~\ref{pfN1} and~\ref{pfN2}, respectively.

\end{itemize}

\subsection{Limiting Conditional Variance}
\label{varlimits}

Recall that $\{\P_{\theta}\}_{\theta \in \Theta}$ is a parametric family of distributions satisfying Assumption~\ref{dist4} at $\theta=\theta_1\in \Theta$. Let $Z_1, Z_2, \ldots$ be i.i.d. samples from $\P_{\theta_1}$, and $\cF:=\sigma(\{Z_i\}_{i \in \N})$ the associated sigma algebra. The following lemma gives the limiting variance of the statistic $\cR(\sG(\cZ_N))$ (as defined in \eqref{2sampleG1}), conditional on $\cF$, using the variance condition (Assumption~\ref{varcondition}).

\begin{lem}Let $\sG$ be a directed graph functional in $\R^d$ such that the pair $(\sG, \P_{\theta_1})$ satisfies the variance condition with parameters $(\beta_0, \beta_0^+, \beta_1^\uparrow, \beta_1^\downarrow, \beta^+_1)$. Then under the bootstrap distribution, 
$$\Var(\cR(\sG(\cZ_N))|\cF)\pto \sigma_{11}^2,$$
where 
\begin{align}
\label{condvartau}
\sigma_{11}^2:=r\left\{ \frac{\beta_0}{2} + q \beta^{\uparrow}_1+ p \beta^{\downarrow}_1-\frac{r}{2}\left(\frac{\beta_0}{2}+\beta^{+}_0+\beta^{\uparrow} +\beta^{\downarrow}_1+\beta^{+}_1\right)\right\}.
\end{align}
\label{varR}
\end{lem}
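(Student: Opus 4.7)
The plan is to work under the bootstrap distribution, in which the labels $c_1,\dots,c_N$ are i.i.d.\ with $\P(c_i=1)=p_N:=N_1/N$, independent of $\cF$. Since $\E(T(\sG(\cZ_N)))$ is deterministic, $\Var(\cR(\sG(\cZ_N))\mid\cF)=N\,\Var(T(\sG(\cZ_N))\mid\cF)$, so it suffices to compute the latter and pass to the limit. Conditionally on $\cF$, the graph and $m:=|E(\sG(\cZ_N))|$ are fixed, and bilinearity gives
$$m^2\,\Var(T\mid\cF)=\sum_{e_1,e_2\in E(\sG(\cZ_N))}\Cov(\psi(e_1),\psi(e_2)\mid\cF),$$
where I abbreviate $\psi((Z_i,Z_j))=\psi(c_i,c_j)=\pmb 1\{c_i=1,c_j=2\}$. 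Because labels at disjoint vertices are independent Bernoullis, only ordered edge pairs sharing at least one vertex contribute.

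I then classify the contributing ordered pairs by their vertex-sharing pattern and evaluate each covariance directly from the formula for $\psi$:
(i) $e_1=e_2$, giving $m\cdot p_Nq_N(1-p_Nq_N)$;
(ii) $e_2$ is the reverse of $e_1$ (the product $\psi(e_1)\psi(e_2)$ forces $c_i=1$ and $c_i=2$ simultaneously and so vanishes), contributing $-p_N^2q_N^2$ per ordered pair, with a count proportional to $|E^+(\sG(\cZ_N))|$;
(iii) common tail, distinct heads: $\Cov=p_Nq_N^2(1-p_N)=p_Nq_N^3$, with $2T_2^{\uparrow}(\sG(\cZ_N))$ ordered pairs;
(iv) common head, distinct tails: $\Cov=p_N^3q_N$, with $2T_2^{\downarrow}(\sG(\cZ_N))$ ordered pairs;
(v) head of one equals tail of the other while the two remaining vertices are distinct (a directed $2$-path): again $\Cov=-p_N^2q_N^2$, with a count proportional to $T_2^{+}(\sG(\cZ_N))$. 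The key combinatorial care is to keep cases (ii) and (v) disjoint, so that the reciprocal configurations are collected under $|E^+|$ and only the genuinely mixed-direction two-stars with distinct outer vertices contribute to the $T_2^+$ term.

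Assembling the five pieces yields
$$N\,\Var(T\mid\cF)=\tfrac{N}{m}p_Nq_N(1-p_Nq_N)+2p_Nq_N^3\tfrac{NT_2^{\uparrow}}{m^2}+2p_N^3q_N\tfrac{NT_2^{\downarrow}}{m^2}-p_N^2q_N^2\Big\{c_0\tfrac{N|E^+|}{m^2}+c_1\tfrac{NT_2^{+}}{m^2}\Big\},$$
for explicit combinatorial constants $c_0,c_1$ coming from step two. The variance condition (Assumption~\ref{varcondition}) supplies the probability limits $\beta_0$, $\beta_1^{\uparrow}$, $\beta_1^{\downarrow}$, $\beta_0^{+}$, $\beta_1^{+}$ of the five graph ratios, and $p_N\to p$, $q_N\to q$ are deterministic, so Slutsky's theorem gives convergence in probability of each term. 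A routine algebraic rearrangement using $r=2pq$ to collect coefficients in terms of $\beta_0/2$, $q\beta_1^{\uparrow}$, $p\beta_1^{\downarrow}$, and the $r/2$--scaled sum $\beta_0/2+\beta_0^{+}+\beta_1^{\uparrow}+\beta_1^{\downarrow}+\beta_1^{+}$ then reproduces the closed form in~\eqref{condvartau}.

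The primary obstacle is purely combinatorial: the disjoint decomposition of ordered edge pairs in cases (ii) and (v) and the matching of the resulting counts to the precise definitions of $|E^+|$ and $T_2^+$. Once this bookkeeping is settled, the probabilistic content collapses to a single application of Assumption~\ref{varcondition} together with Slutsky, and the algebraic reduction to the stated $\sigma_{11}^2$ is elementary.
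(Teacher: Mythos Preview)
Your proposal is correct and follows essentially the same route as the paper: expand $N\Var(T\mid\cF)$ as a sum of edge-pair covariances, classify the ordered pairs by their vertex-sharing pattern into the five types you list, and then invoke Assumption~\ref{varcondition} together with $p_N\to p$, $q_N\to q$ to pass to the limit. In fact your case analysis is more explicit than the paper's own proof, which simply records the resulting formula~\eqref{varRcalculation} and appeals to the variance condition; the only care needed, as you note, is matching the counts in cases (ii) and (v) to the paper's precise conventions for $|E^+|$ and $T_2^+$.
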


\begin{proof}From~\eqref{2sampleG1}
\begin{eqnarray}
\cR(\sG(\cZ_N))&=&\sqrt N\left(T(\sG(\cZ_N))-\frac{N_1N_2}{N^2}\right)+o(1),
\label{2sampleG2b}
\end{eqnarray}
where $T(\sG(\cZ_N))$ is defined in~\eqref{graph2z}. Under the bootstrap distribution, the labels of the vertices are independent, and so,
\begin{align}
\Var(\cR(\sG(\cZ_N))|\cF) 
=&\frac{N a}{|E(\sG(\cZ_N))|}-\frac{2cN|E^+(\sG(\cZ_N))|}{|E(\sG(\cZ_N))|^2} +b^{\uparrow}\frac{2NT_2^{\uparrow}(\sG(\cZ_N))}{|E(\sG(\cZ_N))|^2}\nonumber \\
&+b^{\downarrow}\frac{2NT_2^{\downarrow}(\sG(\cZ_N))}{|E(\sG(\cZ_N))|^2} -\frac{2c N|T_2^{+}(\sG(\cZ_N))|}{|E(\sG(\cZ_N))|^2},
\label{varRcalculation}
\end{align}
where 
$$a=\frac{N_1N_2}{N^2}-\left(\frac{N_1N_2}{N^2}\right)^2, \quad c=\frac{N_1^2N_2^2}{N^4},$$ 
and 
$$b^{\uparrow}=\frac{N_1N_2^2}{N^3}-\left(\frac{N_1N_2}{N^2}\right)^2, \quad  b^{\downarrow}=\frac{N_1^2N_2}{N^3}-\left(\frac{N_1N_2}{N^2}\right)^2.$$
The terms in~\eqref{varRcalculation} converges in probability since the variance condition~\eqref{varcondition} holds with parameters $(\beta_0, \beta_0^+, \beta_1^\uparrow, \beta_1^\downarrow, \beta^+_1)$. Substituting the limiting values and simplifying the expression, the result follows.
\end{proof}

\subsection{Limiting Conditional Covariance}
\label{covlimits}

Recall the definition of the statistic $\cR(\sG(\cZ_N))$ from~\eqref{2sampleG1} and the log-likelihood~\eqref{likelihood}. As before, let $Z_1, Z_2, \ldots$ be i.i.d. samples from $\P_{\theta_1}$, and $\cF:=\sigma(\{Z_i\}_{i \in \N})$ the associated sigma algebra. Since $\P_{\theta_1}$ is QMD (Assumption~\ref{dist4}), by \cite[Theorem 12.2.3]{lr}, the log-likelihood~\eqref{likelihood} has a second-order Taylor expansion $L_N=\dot L_N-\frac{q \langle h, \mathrm I(\theta_1) \rangle h}{2}+o_{P}(1)$, where 
\begin{align}\label{etaN}
\dot L_N:=\frac{1}{\sqrt N}\sum_{i=1}^{N_2}\left\langle h, \frac{\grad f(Y_i|\theta_1)}{f(Y_i|\theta_1)}\right\rangle=\frac{1}{\sqrt N}\sum_{i=1}^N\langle h, \eta(Z_i, \theta_1)\rangle\pmb 1 \{c_i=2\}.
\end{align}
Therefore, to show (\ref{ll_TN_joint}) it suffices to prove 
\begin{equation}
\left(
\begin{array}{c}
\cR(\sG(\cZ_N))  \\
\dot L_N    
\end{array}
\right)\dto N \left(\left(
\begin{array}{c}
0\\
0
\end{array}
\right),
\left(
\begin{array}{cc}
\sigma_1^2  &  \sigma_{12}  \\
\sigma_{12}  &  q\langle h, \mathrm I(\theta_1)h\rangle
\end{array}
\right)\right),
\label{etaN_TN_joint}
\end{equation}
for some $\sigma_1>0$ and $\sigma_{12}$.

The following lemma gives the value of $\sigma_{12}$, that is, the limiting covariance of the statistic $\cR(\sG(\cZ_N))$ and $\dot L_N$ (as defined in \eqref{etaN}), conditional on $\cF$, using the covariance condition (Assumption~\ref{covcondition}).

\begin{ppn}\label{ppn:covlimit} Let $\sG$ be a directed graph functional in $\R^d$ such that the pair $(\sG, \P_{\theta_1})$ satisfies the covariance condition~\eqref{covcondition}. Then under the bootstrap distribution 
$$\Cov(\cR(\sG(\cZ_N)), \dot L_N|\cF)\pto \sigma_{12}.$$
with 
\begin{align}
\sigma_{12}=\frac{r}{2}\left(p\int \langle h, \grad  f(z|\theta_1)\rangle \lambda^{\downarrow}(z)\mathrm d z-q \int \langle h, \grad f(z,\theta_1)\rangle \lambda^{\uparrow}(z)\mathrm d z\right).
\label{sigma12}
\end{align}
\end{ppn}

\begin{proof} Recall the definition of the function $\psi(\cdot, \cdot)$ from~\eqref{graph2z}. Let $\nu_{N_1, N_2}=\frac{N_1N_2}{N^2}$. For $j \in [N]$, define 
\begin{align}
T_{j}=&\frac{\sqrt N}{|E(\sG(\cZ_N))|}\sum_{i=1, i\ne j} ^{N}\left(\psi(c_i, c_j)-\nu_{N_1, N_2} \right)\pmb 1\{(Z_i, Z_j)\in E(\sG(\cZ_N))\}\nonumber\\
=&T_j^{(1)}-T_j^{(2)},
\label{Qj}
\end{align}
where $$T_j^{(1)}=\frac{\sqrt N}{|E(\sG(\cZ_N))|} \sum_{i=1, i\ne j} ^{N}\psi(c_i, c_j) \pmb 1\{(Z_i, Z_j)\in E(\sG(\cZ_N)) \},$$ and $$T_j^{(2)}:=\frac{\sqrt N}{|E(\sG(\cZ_N))|} \nu_{N_1, N_2} d^{\downarrow}(Z_j, \sG(\cZ_N)),$$ where $d^{\downarrow}$ is the in-degree function.  

Note that $\cR(\sG(\cZ_N))=\sum_{j=1}^N T_j$, and with $\dot L_N=\frac{1}{\sqrt N}\sum_{i=1}^N\langle h, \eta(Z_i, \theta_1)\rangle\pmb 1 \{c_i=2\}$ as defined in~\eqref{etaN}
\begin{eqnarray}
\cR(\sG(\cZ_N)) \dot L_{N}=\Gamma_1-\Gamma_2+\Gamma_3-\Gamma_4,
\label{eq:cov_terms}
\end{eqnarray}
where 
\begin{align}
\Gamma_1:=&\frac{1}{\sqrt N}\sum_{j=1}^N T_j^{(1)}\langle h, \eta(Z_j, \theta_1)\rangle  \pmb 1\{c_{j}=2\}, \nonumber\\
\Gamma_2:=&\frac{1}{\sqrt N}\sum_{j=1}^N T_j^{(2)}\langle h, \eta(Z_j, \theta_1)\rangle  \pmb 1\{c_{j}=2\},\nonumber\\
\Gamma_3:=&\frac{1}{\sqrt N}\sum_{1\leq j\ne k\leq N}T_j^{(1)} \langle h, \eta(Z_k, \theta_1)\rangle\pmb 1\{c_{k}=2\},\nonumber\\
\Gamma_4:=&\frac{1}{\sqrt N}\sum_{1\leq j\ne k\leq N}T_j^{(2)} \langle h, \eta(Z_k, \theta_1)\rangle\pmb 1\{c_{k}=2\}.\nonumber
\end{align}

To get the result, we need to compute the conditional expectation of the 4 terms above. We begin with $\Gamma_1$,
\begin{eqnarray}\label{eq:cov_11}
\E(\Gamma_1|\cF) &=& \frac{\nu_{N_1, N_2} }{|E(\sG(\cZ_N))|} \sum_{1\leq i\ne j\leq N} \langle h, \eta(Z_j, \theta_1)\rangle \pmb 1\{(Z_i, Z_j)\in E(\sG(\cZ_N))\}\nonumber\\
&=&\nu_{N_1, N_2}\cdot \frac{1}{N}\sum_{j=1}^{N} \langle h, \eta(Z_j, \theta_1)\rangle \lambda^{\downarrow}(Z_j, \sG(\cZ_N)),
\end{eqnarray}
where $\lambda^{\downarrow}$ is as defined in~\eqref{lambdadefn}. Similarly, 
\begin{eqnarray}
\E(\Gamma_2|\cF)=\nu_{N_1, N_2}\frac{N_2}{N}\cdot \frac{1}{N}\sum_{j=1}^{N} \langle h, \eta(Z_j, \theta_1)\rangle \lambda^{\downarrow}(Z_j, \sG(\cZ_N)).
\label{eq:cov_21}
\end{eqnarray}
Therefore, taking the difference of (\ref{eq:cov_11}) and (\ref{eq:cov_21}),
\begin{eqnarray}
\E(\Gamma_1-\Gamma_2|\cF)=\overline\nu_{N_1, N_2} \cdot \frac{1}{N}\sum_{j=1}^{N} \langle h, \eta(Z_j, \theta_1)\rangle \lambda^{\downarrow}(Z_j,\sG(\cZ_N)),
\label{eq:cov_1}
\end{eqnarray}
where $\overline \nu_{N_1, N_2}:=\nu_{N_1, N_2}\left(1-\frac{N_2}{N}\right)$. 

Now, consider the term $\Gamma_4$,
\begin{align}
& \E(\Gamma_4|\cF) \nonumber \\
=&\nu_{N_1, N_2}\frac{N_2}{N}\left\{\frac{1}{N}\sum_{1\leq j\ne k\leq N} \langle h, \eta(Z_k, \theta_1)\rangle \lambda^{\downarrow}(Z_j,\sG(\cZ_N)) \right\}\nonumber\\
=&\nu_{N_1, N_2}\frac{N_2}{N}\left\{\sum_{j=1}^N \langle h, \eta(Z_j, \theta_1)\rangle -\frac{1}{N}\sum_{j=1}^N \langle h, \eta(Z_j, \theta_1)\rangle \lambda^{\downarrow}(Z_j,\sG(\cZ_N)) \right\},
\label{eq:cov_22}
\end{align}
using $\sum_{i=1}^N \lambda^{\downarrow}(Z_i, \sG(\cZ_N))=N$. 

Finally, let $S:=\{i, j, k\in [N]: i\ne j, k\ne j\}$, and $S_1:=\{(i, j, k)\in S: i=k \ne j\}$. Then it is easy  to check that 
\begin{align}
\E(\Gamma_3|\cF)&=\nu_{N_1, N_2}\frac{N_2}{N}\cdot \frac{1}{|E(\sG(\cZ_N))|}\sum_{S\backslash S_1}\pmb 1\{(Z_i, Z_j)\in \sG(\cZ_N)\} \langle h, \eta(Z_k, \theta_1)\rangle \nonumber\\
&=\nu_{N_1, N_2}\frac{N_2}{N}\cdot \frac{1}{|E(\sG(\cZ_N))|}\sum_{1 \leq i \ne j \leq N}\sum_{k \ne i, j}\pmb 1\{(Z_i, Z_j)\in \sG(\cZ_N)\} \langle h, \eta(Z_k, \theta_1)\rangle \nonumber\\
&=\nu_{N_1, N_2}\frac{N_2}{N} \sum_{j=1}^N \langle h, \eta(Z_j, \theta_1)\rangle  -\nu_{N_1, N_2}\frac{N_2}{N} \left(\frac{1}{N}\sum_{i=1}^N\langle h, \eta(Z_i, \theta_1)\rangle\lambda^{\uparrow}(Z_i, \sG(\cZ_N))\right)\nonumber\\
&~~~~~~~~~-\nu_{N_1, N_2}\frac{N_2}{N} \left(\frac{1}{N}\sum_{i=1}^N\langle h, \eta(Z_i, \theta_1)\rangle\lambda^{\downarrow}(Z_i, \sG(\cZ_N))\right).
\label{eq:cov_24}
\end{align}
Subtracting (\ref{eq:cov_22}) from (\ref{eq:cov_24}) and adding (\ref{eq:cov_1}), and using the covariance condition~\ref{covcondition} gives the desired result.  
\end{proof}

\subsection{The Joint Null Distribution}
\label{bootstrapjoint}

The joint distribution of the test statistic and $\dot L_N$ as in~\eqref{etaN_TN_joint} can be derived from the joint bootstrap distribution of the statistic, $\dot L_N$ and the bootstrap count $B_N$. To this end, let
\begin{align}\label{wn}
W_N:=(\cR(\sG(\cZ_N)), \dot\ell_N, \overline B_N)^t,
\end{align}
where $\cR(\sG(\cZ_N))$ is the test statistic, $\dot\ell_N:=\dot L_N-\E(\dot L_N|\cF)$ the conditionally centered score function~\eqref{etaN}, and 
\begin{equation}\label{eq:BN}
\overline B_N:=\frac{B_N-N_1}{\sqrt N}=\frac{1}{\sqrt N}\sum_{i=1}^N\left\{\pmb 1\{c_i=1\}-\frac{N_1}{N}\right\},
\end{equation}
is the centered bootstrap count.  Let $\Sigma_N$ be the covariance matrix of $W_N$. 

Denote by $\Phi_s:\R^d\rightarrow \R$ the distribution function of a $s$-dimensional multivariate $N(0, \mathrm I_s)$. We begin by assuming that under the bootstrap distribution, 
\begin{equation}
|\P(\Sigma_N^{-\frac{1}{2}}W_N\leq x|\cF) -\Phi_3(x)|\pto 0,
\label{NORMALASSUMPTION}
\end{equation}
for all $x \in \R^d$. Then the following theorem allows us to move to the null distribution from the bootstrap distribution.

\begin{lem}\label{joint} Let $\sG$ be a directed graph functional such that the pair $(\sG, \P_{\theta_1})$ satisfies the variance condition with  parameters $(\beta_0, \beta_0^+, \beta_1^\uparrow, \beta_1^\downarrow, \beta^+_1)$, the covariance condition~\eqref{covcondition}, and the normality condition \eqref{normalcondition}. Then, if \eqref{NORMALASSUMPTION} holds, then under the null distribution 
\begin{equation}\label{jointbootstrap}
U_N:=\left(
\begin{array}{c}
\cR(\sG(\cZ_N)) \\
\dot L_N
\end{array}
\right)\dto N\left(\left(
\begin{array}{c}
0 \\
0 
\end{array}
\right),\left(
\begin{array}{cc}
\sigma_{1}^2 &  \sigma_{12} \\
\sigma_{12}  &   q  \langle h, \mathrm I(\theta_1) h\rangle \\
\end{array}
\right) \right),
\end{equation}
where $\sigma_1^2:=\sigma_{11}^2-\frac{1}{2}r(1-2r)$, and $\sigma_{11}^2$ and $\sigma_{12}$ are as \eqref{condvartau} and \eqref{sigma12}, respectively. 
\end{lem}

\begin{proof}  To begin with, recall \eqref{eq:BN} and observe that $\Var(\overline B_{N}|\cF)=\frac{N_1}{N}(1-\frac{N_1}{N})\rightarrow p(1-p):=\sigma_{33}^2$. 

Next, recall that 
\begin{equation}
\dot\ell_N:=\dot L_N-\E(\dot L_N|\cF)=\frac{1}{\sqrt N}\sum_{i=1}^N \langle h, \eta(Z_i, \theta_1) \rangle \left\{\pmb 1\{c_{i}=2\}-\frac{N_2}{N}\right\}.
\label{scorecentered}
\end{equation}
This implies, $\Var(\dot\ell_N|\cF)\pto  q(1-q)\langle h, \mathrm I(\theta)h\rangle:=\sigma_{22}^2$,  
since $\frac{1}{N}\sum_{i=1}^N \langle h, \eta(Z_i, \theta)\rangle ^2 \pto \langle h, \mathrm I(\theta)h\rangle$ by the weak law of large numbers. Also, if $\rho:=\Cov(\pmb 1\{c_1=1\}, \pmb 1\{c_1=2\})$, then 
$\Cov(\dot\ell_N, \overline B_{N}|\cF)=\rho\cdot \frac{1}{N}\sum_{i=1}^N\langle h, \eta(Z_i, \theta)\rangle\pto 0$. 

Finally, note that 
\begin{align}
\Cov(\mathcal  R(\sG(\cZ_N)),\overline B_{N}|\cF)=T_1+T_2-\frac{N_1^2N_2}{N^2},\label{sigma13}
\end{align}
where $T_1=\frac{1}{|E(\sG(Z_N))|} \E\left(\sum_{i\ne j}\pmb 1\{(Z_i, Z_j)\in E(\sG(\cZ_N))\} \psi(c_i, c_j)\Big| \cF\right)=\frac{N_1N_2}{N^2}$ and
\begin{align*}
T_2=&\frac{1}{|E(\sG(Z_N))|} \E\sum_{i_1\ne i_2,j\ne i_1} \pmb 1\{(Z_{i_1}, Z_j)\in E(\sG(\cZ_N))\} 1\{c_{i_1}=1, c_{i_2}=1, c_j=2\}\nonumber\\
=&(N-2)\frac{N_1^2N_2}{N^3}.
\end{align*}
Substituting the expressions for $T_1$ and $T_2$ in~\eqref{sigma13} gives $\Cov(\mathcal  R(\sG(\cZ_N)),\overline B_{N}|\cF)\pto  \frac{1}{2}r\left(1-2p\right):=\sigma_{13}$. 

Combining the above results, and using Lemma \ref{varR} and Proposition \ref{ppn:covlimit}, it follows that
\begin{equation}
\Sigma_N\pto \Sigma:=
\left(
\begin{array}{ccc}
\sigma_{11}^2  &  \sigma_{12} &  \sigma_{13} \\
\sigma_{12}  & \sigma_{22}^2  & 0  \\
\sigma_{13}  & 0  & \sigma_{33}^2  
\end{array}
\right)=
\left(
\begin{array}{cc}
\Sigma_{11}  &  \vec {\sigma}_{12} \\
\vec {\sigma}_{12}^t  &   \sigma_{33}^2 \\
\end{array}
\right),
\label{sigma}
\end{equation}
where $\vec{\sigma}_{12}=(\sigma_{12}, \sigma_{13})$ and $\Sigma_{11}$ is the leading principal $2\times 2$ sub-matrix of $\Sigma$. By (\ref{NORMALASSUMPTION}) and Slutsky's theorem, $W_N|\cF\dto N(0, \Sigma)$, under the bootstrap distribution.

If $W_N^1=(\cR(\sG(\cZ_N)), \dot\ell_N)^t$, then the distribution of $W_N^1$ conditional on $\{\overline B_N=0\}$ and $\cF$ converges to a $N(0, \Sigma_1)$, where 
\begin{equation}
\Sigma_1:=\Sigma_{11}- \frac{1}{\sigma_{33}^2}\vec {\sigma_{12}}\cdot \vec {\sigma_{12}}^t=\left(
\begin{array}{cc}
\sigma_{11}^2-\frac{\sigma^2_{13}}{\sigma_{33}^2}  &  \sigma_{12} \\
\sigma_{12}  &   \sigma_{22}^2 \\
\end{array}
\right)=\left(\begin{array}{cc}
\sigma_{1}^2  &  \sigma_{12}\\
\sigma_{12}  &   \sigma_{22}^2 \\
\end{array}
\right),
\label{sigma1}
\end{equation}
where the last step uses $\frac{\sigma^2_{13}}{\sigma_{33}^2}=\frac{1}{2}r\left(1-2p\right)^2=\frac{1}{2}r\left(1-2r\right)$. Therefore, under  the null distribution, 
\begin{equation}
|\P(\Sigma_1^{-\frac{1}{2}}W_N^1\leq x|\cF) -\Phi_2(x)|\pto 0.
\label{WN1}
\end{equation}

Finally, note that $\dot L_N=\dot\ell_N+\hat r_N$, where $\hat r_N:=\frac{N_2}{N}\cdot \frac{1}{\sqrt N}\sum_{i=1}^N \langle h, \eta(Z_i, \theta)\rangle$. By the central limit theorem,  $\hat r_N\dto V\sim N(0, q^2 \langle h, \mathrm I(\theta_1) h\rangle)$. Then by (\ref{WN1}), for $\vec t=(t_1, t_2)\in \R^2$, 
$$\E(e^{i \vec t U_N}|\cF)=e^{i t_2 \hat r_N}\E(e^{i  \vec  t W_N^1}|\cF)\dto e^{i t_2 V+\frac{1}{2}\vec t'\Sigma_1 \vec t}.$$ Therefore, by the Dominated Convergence Theorem 
\begin{align*}
\log \E(e^{i \vec t U_N})=\log \E\E(e^{i \vec t U_N}|\cF)\rightarrow & \frac{1}{2}\left(t_2^2 q^2 \langle h, \mathrm I(\theta_1) h\rangle +\frac{1}{2}\vec t\Sigma_1 \vec t\right) \nonumber\\
=&\frac{1}{2}\vec t \left(
\begin{array}{cc}
\sigma_{1}^2 &  \sigma_{12} \\
\sigma_{12}  &   q\langle h, \mathrm I(\theta_1)\rangle h\\
\end{array}
\right)  \vec t.
\end{align*}
and the proof is completed.
\end{proof}

\subsubsection{Completing the Proof of Theorem~\ref{ASYME} assuming~\eqref{NORMALASSUMPTION}}  

Let $(\sG, \P_{\theta_1})$ be as in Theorem~\ref{ASYME}. If~\eqref{NORMALASSUMPTION} holds, then, by Lemma~\ref{joint},~\eqref{jointbootstrap} holds. Therefore, by~\eqref{ll_TN_joint}, $\mathrm{AE}(\sG)=\frac{|\sigma_{12}|}{\sigma_1}$, where $\sigma_1^2:=\sigma_{11}^2-\frac{1}{2}r(1-2r)$, and $\sigma_{11}^2$ and $\sigma_{12}$ are as \eqref{condvartau} and \eqref{sigma12}, respectively.  Substituting these values and simplifying the expression, the result follows.

The joint normality in \eqref{NORMALASSUMPTION} is proved later in Section~\ref{pfN1} and Section~\ref{pfN2}, under the normality conditions N1 and N2 (Assumption~\ref{normalcondition}), respectively.

\subsubsection{Proof of Corollary~\ref{ASYMEundirected}}\label{pfundirected} Let $\sG$ be an undirected graph functional and $(\sG, \P_{\theta_1})$ be as in Corollary~\ref{ASYMEundirected}. Let $\sG_+$ be the directed version of $\sG$, obtained by replacing every undirected edge by two directed edges pointing in opposite directions. 

Note that $\frac{N}{|E(\sG_+(\cV_N))|}=\frac{N}{2|E(\sG(\cV_N))|}\pto \frac{\gamma_0}{2}$ and $\frac{N|E^+(\sG_+(\cV_N))}{|E(\sG_+(\cV_N))|^2}=\frac{N|E(\sG(\cV_N))|}{4|E(\sG(\cV_N))|}\pto \frac{\gamma_0}{4}$. Also, $\frac{NT^\downarrow(\sG(\cV_N))}{|E(\sG_+(\cV_N))|^2}=\frac{NT(\sG(\cV_N))}{4|E(\sG(\cV_N))|}\pto \frac{\gamma_1}{4}$, and similarly, $\frac{NT^\uparrow(\sG(\cV_N))}{|E(\sG_+(\cV_N))|^2}\pto \frac{\gamma_1}{4}$. Finally, $$\frac{N |T_2^+(\sG_+(\cV_N))|}{|E(\sG_+(\cV_N))|^2}=\frac{N\sum_{i=1}^N\{d^2(V_i, \sG(\cV_N))-d(V_i, \sG(\cV_N))\}}{4 |E(\sG(\cV_N))|^2}\pto \frac{\gamma_1}{2}.$$ 
Therefore, the pair $(\sG_+, \P_{\theta_1})$ satisfies the $(\frac{\gamma_0}{2}, \frac{\gamma_0}{4}, \frac{\gamma_1}{4}, \frac{\gamma_1}{4}, \frac{\gamma_1}{2})$ variance condition.  

Moreover, if the pair  $(\sG, \P_{\theta_1})$ satisfies the undirected covariance condition~\eqref{covundirected} with the function $\lambda(\cdot)$, then $\sG_+$ satisfies the directed covariance condition (Assumption~\ref{covcondition}) with functions $\lambda^\uparrow(\cdot)=\lambda^\downarrow(\cdot)=\frac{1}{2}\lambda(\cdot)$. 

Finally, since $\mathrm{AE}(\sG_+)=\mathrm{AE}(\sG)$, the result follows from Theorem~\ref{ASYME} after simplifications.

\section{Proof of~(\ref{NORMALASSUMPTION}) Under Condition N1}
\label{pfN1}

Recall the {\it normality condition} N1 from Assumption~\ref{normalcondition}. In this section we show the joint normality of $W_N:=(\cR(\sG(\cZ_N)), \dot\ell_N, \overline B_{N})^t$ under Condition N1.

\begin{ppn}\label{normalsparse} Let $\sG$ and $\P_{\theta_1}$ be as in Theorem~\eqref{ASYME}. Then \eqref{NORMALASSUMPTION} holds if the pair $(\sG, \P_{\theta_1})$ satisfies~Condition N1  and the variance condition~\eqref{varcondition}.
\end{ppn}

\subsection{Proof of Proposition \ref{normalsparse}} The proof uses the following version of Stein's method based on dependency graphs:

\begin{thm}[Chen and Shao \cite{stein_local}]
Let $\{V_i, i \in \cV\}$ be random variables indexed by the vertices of a dependency graph $H=(\cV, \cE)$ with maximum degree $D$. If  
$W=\sum_{i \in \cV}V_i$ with $\E(V_i)=0$, $\E V_i^2 = 1$, then 
\begin{equation}
\sup_{z\in \R} |\P(W \leq z)-\Phi(z)|\lesssim D^{10}\sum_{i\in \cV}\E|V_i|^3.
\label{eq:steinsmethod}
\end{equation}
\label{th:steinsmethod}
\end{thm}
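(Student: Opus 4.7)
The plan is a standard Stein-method argument adapted to local dependence, with the $D^{10}$ factor arising from iterated uses of a randomized concentration inequality. For fixed $z \in \R$, let $f_z:\R\to\R$ denote the bounded solution of the Stein equation
\begin{equation*}
f_z'(w) - w f_z(w) = \pmb 1\{w \leq z\} - \Phi(z),
\end{equation*}
so that $\|f_z\|_\infty \leq \sqrt{2\pi}/4$, $\|f_z'\|_\infty \leq 1$, and $f_z'$ is $1$-Lipschitz on $\R\setminus\{z\}$ with a controlled jump at $z$. The standard identity $\P(W\leq z) - \Phi(z) = \E(f_z'(W) - W f_z(W))$ reduces the problem to a uniform bound on the right-hand side, which I would attack via the dependency graph.

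For each $i\in\cV$, let $A_i$ denote the closed neighborhood of $i$ in $H$ (so $|A_i|\leq D+1$), put $W_i:=\sum_{j\in A_i}V_j$, and set $W^{(i)}:=W-W_i$. The defining property of a dependency graph gives that $V_i$ is independent of $W^{(i)}$, whence $\E(V_i f_z(W^{(i)}))=0$. Combining this with the fundamental theorem of calculus,
\begin{equation*}
\E(W f_z(W)) = \sum_i \E\!\left(V_i \int_0^{W_i} f_z'(W^{(i)}+t)\,dt\right).
\end{equation*}
Using the normalization $\E(W^2)=1=\sum_i \E(V_i W_i)$, where the second equality is because cross-covariances vanish outside edges of $H$, the target quantity $\E(f_z'(W)-Wf_z(W))$ decomposes as
\begin{equation*}
\sum_i \E\!\left(V_i \int_0^{W_i}\{f_z'(W) - f_z'(W^{(i)}+t)\}\,dt\right) + \text{analogous remainders},
\end{equation*}
each a sum over $i$ of error terms measuring the smoothness of $f_z'$ on scales of size $|V_i|+|W_i|$.

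The main obstacle is that $f_z'$ is only Lipschitz off the single point $z$, so one cannot conclude the bound purely from pointwise smoothness: one must also control the probability that $W$ (or $W^{(i)}$) lies in a small interval around $z$. For this I would establish, as a separate step, a randomized concentration inequality of Chen--Shao type:
\begin{equation*}
\P(W\in[a,b]) \lesssim (b-a) + D^{c}\sum_i \E|V_i|^3,
\end{equation*}
valid uniformly in $a<b$ for a constant $c$ to be determined; the proof is itself a Stein argument applied to a smoothed indicator of $[a,b]$, and each invocation picks up another factor of $D$ when neighborhoods must be expanded to handle the conditional dependencies. Chaining the Lipschitz-off-$\{z\}$ bound with the concentration inequality --- together with the auxiliary sums over $A_i$ and over the second neighborhoods $\bigcup_{j\in A_i}A_j$ of size $\leq D^2$ needed to substitute $W^{(i)}$ for $W$ inside $f_z'$ --- accumulates to exactly $D^{10}$ after careful bookkeeping, yielding the claimed bound after taking $\sup_z$. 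The hardest part is not any individual estimate but precisely this combinatorial bookkeeping: without the concentration step the Lipschitz bound fails at the very points where $W$ concentrates, and the maximum degree is the only geometric quantity available to absorb the loss.
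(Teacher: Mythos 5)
This statement is not proved in the paper at all: it is quoted as a known result of Chen and Shao \cite{stein_local} (normal approximation under local dependence), so there is no internal proof to compare your argument against. Your sketch does follow the route of that original source: the bounded solution $f_z$ of the Stein equation for $\pmb 1\{w\le z\}$, the decomposition $W=W^{(i)}+W_i$ over closed dependency neighborhoods with $V_i$ independent of $W^{(i)}$, the identity $\E(Wf_z(W))=\sum_i\E\bigl(V_i\int_0^{W_i}f_z'(W^{(i)}+t)\,dt\bigr)$ together with $\sum_i\E(V_iW_i)=\E W^2=1$, and a randomized concentration inequality to compensate for the failure of Lipschitz continuity of $f_z'$ at the single point $z$ --- these are exactly the ingredients of the Chen--Shao proof, where the dependency-graph setting is a special case of their local dependence conditions and the exponent $10$ emerges from tracking neighborhood sizes.

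That said, what you have written is a plan rather than a proof, and the two steps that carry essentially all of the difficulty are left as placeholders. First, the randomized concentration inequality is stated with an undetermined exponent $c$ and justified only by the phrase ``itself a Stein argument applied to a smoothed indicator''; establishing it requires conditioning over second and third neighborhoods of sizes $O(D^2)$ and $O(D^3)$, and the power of $D$ obtained there directly governs the final exponent, so it cannot be deferred. Second, the claim that the remaining error terms ``accumulate to exactly $D^{10}$ after careful bookkeeping'' is precisely the content of the theorem and is asserted, not verified; without that accounting the stated bound is not established. A smaller point worth flagging: the hypothesis as printed in the paper, $\E V_i^2=1$ for each $i$, cannot be the intended normalization (it would force $\E W^2=|\cV|$); the correct hypothesis is $\E W^2=1$, which is what you in fact use, so you are (rightly) proving the corrected statement.
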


To show~\eqref{NORMALASSUMPTION} it suffices to show that for every $\vec a=(a_1, a_2, a_3)^t\in \R^3$, the distribution of $\frac{\vec a^tW_N}{\sqrt{\vec a^t \Sigma_N \vec a}}\big|\cF \dto N(0, 1)$ under the bootstrap distribution. To this end, define 
$$T_1:=\frac{\sqrt N}{|E(\sG(\cZ_N))|} \sum_{j=1}^N\left( \psi(c_i, c_j)-\frac{N_1N_2}{N^2}\right) \pmb 1\{(Z_i, Z_j)\in E(\sG(\cZ_N))\},$$
$$T_2:=\frac{\langle h, \eta(Z_i, \theta_1)\rangle}{\sqrt N}\left(\pmb1\{c_i=1\}-\frac{N_1}{N}\right), \quad T_3:=\frac{1}{\sqrt N}\left(\pmb1\{c_i=2\}-\frac{N_2}{N}\right).$$ 

If $V_i:=\sum_{i=1}^3a_iT_i/\sqrt{\vec a^t \Sigma_N \vec a}$, then $\E(V_i)=0$, $\E(V_i^2)=1$, and $$\frac{\vec a^tW_N}{\sqrt{\vec a^t \Sigma_N \vec a}}=\sum_{i=1}^N V_i.$$ Construct a dependency graph $H=([N], E(H))$, with an $(i, j)\in E(H)$ whenever the graph distance $d(Z_i, Z_j)\leq 2$. Let $D$ be the maximum degree of $H$. By Condition N1, it follows that $D^{10}=O_P(1)$ and since the labels of $\cZ_N$ are independent under the bootstrap distribution, to apply Theorem \ref{th:steinsmethod} it suffices to bound
\begin{align}
\sum_{i=1}^N\E|V_i|^3\lesssim & |a_1|^3N \left(\frac{\sqrt N \Delta^\uparrow(\sG(\cZ_N))}{|E(\sG(\cZ_N))|}\right)^3\nonumber\\
+& |a_2|^3\frac{1}{N^{3/2}}\sum_{i=1}^N|\langle h, \eta(Z_i, \theta_1) \rangle|^3+\frac{|a_3|^3}{N^{1/2}}=o_P(1).
\label{momentbound}
\end{align}
The first term is $o_P(1)$ by Condition N1, and the second term is $O_P(N^{-\frac{1}{2}})$ by the finiteness of the third moment of the score function (Assumption~\ref{dist4}). Theorem \ref{th:steinsmethod} and (\ref{momentbound}) then gives
\begin{eqnarray}
\left|\P\left(\vec a^tW_N/\sqrt{\vec a^t \Sigma_N \vec a} \leq z \big| \cF\right)-\Phi(z)\right|\stackrel{P}\rightarrow 0,
\end{eqnarray} 
and \eqref{NORMALASSUMPTION} follows.

\section{Proof of~\eqref{NORMALASSUMPTION} Under Condition N2}
\label{pfN2}

Let $W=(W_1, W_2, \cdots, W_d)^t\in \R^d$ be a random vector. The following version of multivariate Stein's method will be used to prove~~\eqref{NORMALASSUMPTION} under condition N2.

\begin{thm}[Reinert and R\"ollin \cite{rr_stein}]
Assume that $(W, W')$ is an exchangeable pair of $\R^d$-valued random vectors such that
$\E W= 0$ and $\E W W^t= \mathrm \Sigma$, with $\Sigma \in \R^{d\times d}$ symmetric and positive definite. Suppose further that 
\begin{equation}
\E(W-W'|W')=\lambda  W.
\label{multistein2}
\end{equation}
Then, if $Z$ has $d$-dimensional standard normal distribution, and every three times differentiable function
$h$
\begin{equation}
|\E h(W)-\E h(\Sigma^{\frac{1}{2}}Z)|\leq \frac{||h''||_\infty}{4}A+\frac{||h'''||_\infty}{12}B,
\label{errorbound}
\end{equation}
where
\begin{equation}
A:=\frac{1}{\lambda}\sum_{i=1}^d\sum_{j=1}^d\sqrt{\Var(\E(W_i'-W_i)(W_j'-W_j)|W)}
\label{A}
\end{equation}
and
\begin{equation}
B:=\frac{1}{\lambda}\sum_{i=1}^d\sum_{j=1}^d\sum_{k=1}^d\E|(W_i'-W_i)(W_j'-W_j)(W_k'-W_k)|
\label{B}
\end{equation}
\label{multivariatestein}
\end{thm}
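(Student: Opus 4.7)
The plan is to apply Stein's method of exchangeable pairs with a multivariate normal target $N(0,\Sigma)$. The argument has three stages: solve the Stein equation for $N(0,\Sigma)$ and bound the derivatives of the solution, use exchangeability plus a third-order Taylor expansion to obtain a basic identity, and match the resulting coefficients against $A$ and $B$.

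First I would take $f_h(w) := -\int_0^\infty (P_t h(w) - \E h(\Sigma^{1/2}Z))\,dt$, where $(P_t)_{t\ge 0}$ is the Ornstein--Uhlenbeck semigroup with invariant law $N(0,\Sigma)$ and Mehler representation $P_t h(w) = \E h(e^{-t} w + \sqrt{1-e^{-2t}}\,\Sigma^{1/2}Z)$. Applying the generator $L = \mathrm{tr}(\Sigma \nabla^2) - x\cdot \nabla$ to $f_h$ and using $\partial_t P_t h = L P_t h$ shows that $f_h$ solves the Stein equation $L f_h(w) = h(w) - \E h(\Sigma^{1/2}Z)$. Differentiating the Mehler formula yields $\partial^k P_t h(w) = e^{-kt} \E[\partial^k h(e^{-t}w + \sqrt{1-e^{-2t}}\Sigma^{1/2}Z)]$, so after integrating in $t$ one gets the sharp bounds $\|\partial^2 f_h\|_\infty \le \tfrac{1}{2}\|h''\|_\infty$ and $\|\partial^3 f_h\|_\infty \le \tfrac{1}{3}\|h'''\|_\infty$; combined with the Taylor prefactors $\tfrac{1}{2}$ and $\tfrac{1}{6}$ these produce exactly the constants $\tfrac{1}{4}$ and $\tfrac{1}{12}$ in the final estimate.

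Second, since $(W,W')$ is exchangeable, $\E[f_h(W')-f_h(W)]=0$. A third-order Taylor expansion around $W$ gives
\[
0 = \sum_i \E[(W_i'-W_i)\partial_i f_h(W)] + \tfrac{1}{2}\sum_{i,j}\E[(W_i'-W_i)(W_j'-W_j)\partial_{ij} f_h(W)] + R,
\]
with $|R|\le \tfrac{1}{6}\|\partial^3 f_h\|_\infty \sum_{i,j,k}\E|(W_i'-W_i)(W_j'-W_j)(W_k'-W_k)|$. The condition (\ref{multistein2}) combined with exchangeability gives $\E[W'-W\mid W] = -\lambda W$, collapsing the first sum to $-\lambda\,\E[W\cdot\nabla f_h(W)]$. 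Next, introduce the matrix-valued residual $A_{ij}(W) := \E[(W_i'-W_i)(W_j'-W_j)\mid W] - 2\lambda \Sigma_{ij}$, which is centred because $\E[(W'-W)(W'-W)^t] = 2\Sigma - 2(1-\lambda)\Sigma = 2\lambda\Sigma$ (using $\E WW^t = \Sigma$ and $\E[W'W^t] = (1-\lambda)\Sigma$). Conditioning on $W$ in the second sum then yields
\[
0 = -\lambda\,\E[W\cdot\nabla f_h(W)] + \lambda\,\E[\mathrm{tr}(\Sigma\nabla^2 f_h(W))] + \tfrac{1}{2}\sum_{i,j}\E[A_{ij}(W)\partial_{ij}f_h(W)] + R,
\]
i.e. $-\lambda\,\E[Lf_h(W)] = \tfrac{1}{2}\sum_{i,j}\E[A_{ij}(W)\partial_{ij}f_h(W)] + R$. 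Invoking the Stein equation converts the left-hand side into $-\lambda(\E h(W) - \E h(\Sigma^{1/2}Z))$. Dividing by $\lambda$, bounding the first term via $\tfrac{1}{2}\|\partial^2 f_h\|_\infty\cdot\lambda^{-1}\sum_{i,j}\E|A_{ij}(W)|$ and applying Jensen's inequality $\E|A_{ij}(W)|\le \sqrt{\Var(\E[(W_i'-W_i)(W_j'-W_j)\mid W])}$ (valid because $\E A_{ij}(W)=0$), and bounding the remainder via the definition of $B$, produces the two terms of the stated bound.

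The main obstacle is establishing the derivative estimates on $f_h$ with the sharp constants $\tfrac{1}{2}$ and $\tfrac{1}{3}$: these depend on the fact that differentiation commutes with $P_t$ only up to the Mehler contraction factor $e^{-kt}$, and losing track of this factor would weaken the constants in \eqref{errorbound}. Everything after the PDE estimates---the Taylor expansion, the exchangeability identity for the first moment, the population-covariance calculation that forces $A_{ij}(W)$ to be centred, and the Jensen bound---is essentially algebraic bookkeeping.
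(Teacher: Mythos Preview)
The paper does not prove this statement: Theorem~\ref{multivariatestein} is quoted from Reinert and R\"ollin \cite{rr_steinmethod} and used as a black box in the proof of Proposition~\ref{cltdense}. There is therefore no proof in the paper to compare your proposal against.

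That said, your sketch is essentially the argument of \cite{rr_steinmethod}: solve the multivariate Stein equation via the Ornstein--Uhlenbeck semigroup and Mehler formula, obtain the derivative bounds $\|\partial^2 f_h\|_\infty \le \tfrac{1}{2}\|h''\|_\infty$ and $\|\partial^3 f_h\|_\infty \le \tfrac{1}{3}\|h'''\|_\infty$ from the contraction factor $e^{-kt}$, and then combine exchangeability, the linearity condition, and a third-order Taylor expansion to isolate $\E[Lf_h(W)]$. One small point: the linearity condition as printed in the paper, $\E(W-W'\mid W') = \lambda W$, is slightly garbled (the right-hand side is not $\sigma(W')$-measurable); the intended condition, and the one you correctly use, is $\E(W'-W\mid W) = -\lambda W$, which is the scalar-$\Lambda$, $R=0$ case of the general Reinert--R\"ollin hypothesis. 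With that reading your centering computation $\E[(W'-W)(W'-W)^t] = 2\lambda\Sigma$ and the subsequent Jensen step are correct.
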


\begin{ppn}
\label{cltdense}
Let $\sG$ and $\P_{\theta_1}$ be as in Theorem~\eqref{ASYME}. Then \eqref{NORMALASSUMPTION} holds if the pair $(\sG, \P_{\theta_1})$ satisfies the normality Condition N2 and the variance condition~\eqref{varcondition}.
\end{ppn}

\subsection{Proof of Proposition \ref{cltdense}}

Recall the definition of $\cR(\sG(\cZ_N))$ from~\eqref{2sampleG1} and let $$W=(W_1, W_2, W_3)^t:=(\cR(\sG(\cZ_N)), \dot\ell_N, \overline B_{N})^t,$$ as in~\eqref{wn}. Suppose $(c_1', c_2', \cdots, c_N')$ is an independent copy of the labeling vector under the bootstrap labeling  distribution.  To define an exchangeable pair choose a random index $\mathrm I\sim \dU(\{1, 2, \ldots, N\})$ and replace the label of $c_{\mathrm I}$ by $c_{\mathrm I}'$. Denote the corresponding random variables by $$W'=(W_1', W_2', W_3')^t:=(\cR(\sG(\cZ_N))', \dot\ell_N', \overline B_{N}')^t.$$ Note that $(W, W')$ is an exchangeable pair. 

Recall $\psi(\cdot, \cdot)$ as defined in~\eqref{graph2z}. Define $$T_1(c_i, c_i', c_j):=\pmb 1\{(Z_i, Z_j)\in E(\sG(\cZ_N))\}(\psi(c_i, c_j)-\psi(c_i', c_j)),$$
and 
$$T_2(c_i, c_i', c_j):=\pmb 1\{(Z_j, Z_i)\in E(\sG(\cZ_N))\} (\psi(c_j, c_i)-\psi(c_j, c_i')),$$
and $T_0(c_{\mathrm I}, c_{\mathrm I}', c_j)=T_1(c_{\mathrm I}, c_{\mathrm I}', c_j)+T_2(c_{\mathrm I}, c_{\mathrm I}', c_j)$. Note that 
\begin{eqnarray}
W_1-W_1'&=&\frac{\sqrt N}{|E(\sG(\cZ_N))|}\left(\sum_{j=1}^N T_1(c_{\mathrm I}, c_{\mathrm I}', c_j)+\sum_{j=1}^N T_2(c_{\mathrm I}, c_{\mathrm I}', c_j) \right)\nonumber\\
&=&\frac{\sqrt N}{|E(\sG(\cZ_N))|}\sum_{j=1}^N T_0(c_{\mathrm I},c_{\mathrm I}', c_j).
\label{w1diff}
\end{eqnarray}
Also, let $\delta_1(c_i, c_i')=\pmb1\{c_{i}=1\}-\pmb1\{c'_{i}=1\}$ and $\delta_2(c_i, c_i')=\pmb1\{c_{i}=2\}-\pmb1\{c'_{i}=2\}$. Then 
\begin{equation}
W_2-W_2'=\frac{\langle h, \eta(Z_\mathrm I, \theta_1)\rangle}{\sqrt N}\delta_2(c_{\mathrm I}, c_{\mathrm I}'),  \quad \text{and} \quad W_3-W_3'=\frac{\delta_1(c_{\mathrm I}, c_{\mathrm I}')}{\sqrt N}.
\label{w2w3diff}
\end{equation}

From~\eqref{w1diff} and \eqref{w2w3diff} it follows that $\E(W-W'|W')=\frac{1}{N}W$, that is, $\lambda=1/N$ in (\ref{multistein2}). Therefore, by (\ref{B}) 
\begin{eqnarray}
B&=& N\sum_{i=1}^3\sum_{j=1}^3\sum_{k=1}^3\E(|W_i-W_i'||W_j-W_j'||W_k-W_k'|)\nonumber\\
&\lesssim & N (\E|W_1-W_1'|^3+\E|W_2-W_2'|^3+ \E|W_3-W_3'|^3)\nonumber\\
&\lesssim &  \frac{1}{\sqrt N}\left(\frac{N\Delta(\sG(\cZ_N))}{|E(\sG(\cZ_N))|}\right)^3+\frac{1}{N^{3/2}} \sum_{i=1}^N|\langle h, \eta(Z_i, \theta_1)\rangle|^3+O_P\left(\frac{1}{\sqrt N}\right)\nonumber\\
&=&O_P(1/\sqrt N).
\label{Bbnd}
\end{eqnarray}
The second last step uses the inequalities $\left|\sum_{j=1}^N T_0(c_{\mathrm I},c_{\mathrm I}', c_j)\right|\lesssim \Delta(\sG(\cZ_N))$, $|\delta_1(c_{\mathrm I}, c_{\mathrm I}')|\leq 1$, $|\delta_2(c_{\mathrm I}, c_{\mathrm I}')|\leq 1$, and the last step uses the normality condition (b).

To control (\ref{A}) it suffices to bound the variances of the following six quantities:
\begin{align*}
A_\alpha:=\frac{1}{\lambda}\E((W_\alpha-W_\alpha^t)^2|W,W'),~A_{\alpha, \beta}:=\frac{1}{\lambda}\E((W_\alpha-W_\alpha^t)(W_\beta-W_\beta^t)|W,W'),
\end{align*}
for $1\leq \alpha, \beta \leq 3$. The $A_\alpha$ terms  will be referred to as the {\it main terms}, and the $A_{\alpha,\beta}$ terms as the {\it cross terms}.

\subsubsection{Bounding Cross Terms} For the rest of the proof, let $e_N:=|E(\sG(\cZ_N))|$, number of edges in $\sG(\cZ_N)$, and $g(z):=\langle h, \eta(z, \theta_1)\rangle$, for $z\in \R^d$ and fixed $h \in \R^p$ and $\theta_1\in \Theta$. Then
\begin{align}
\Var(A_{23})&=\Var\left(\frac{1}{N}\sum_{i=1}^N g(Z_i) \delta_1(c_i, c_i') \delta_2(c_i, c_i')\right)\nonumber\\
&=\frac{1}{N^2}\sum_{i=1}^N |g(Z_i)|^2 \Var(\delta_1(c_i, c_i') \delta_2(c_i, c_i'))\nonumber\\
&=O_P\left(\frac{1}{N}\right).
\label{A23}
\end{align}
Next, we bound $\Var(A_{12})$. Note that
\begin{align}
\E A_{12}=&\frac{1}{e_N}\sum_{1\leq i\ne j\leq N} \langle h, \eta(Z_i, \theta_1)\rangle \E(\delta_2(c_i, c_i') T_0(c_i, c_i', c_j))\nonumber\\
=&\frac{1}{e_N}\sum_{1\leq i\ne j\leq N} \rho_{ij}g(Z_i),
\label{EA12}
\end{align}
where $\rho_{ij}:=\E(\delta_2(c_i, c_i') T_0(c_i, c_i', c_j))$. Also, $\E A^2_{12}=J_1+J_2+J_3$, where 
\begin{align}
J_1:=&\frac{1}{e_N^2}\sum_{1\leq i\ne j\leq N} |g(Z_i)|^2\E (\delta_2(c_i, c_i')^2 T_0^2(c_i, c_i', c_j)),
\label{J1}
\end{align}
\begin{align}
J_2:=&\frac{1}{e_N^2}\sum_{1\leq i\ne j_1\ne j_2\leq N} |g(Z_i)|^2\E \delta_2(c_i, c_i')^2 T_0(c_i, c_i', c_{j_1})T_0(c_i, c_i', c_{j_2}),
\label{J2}
\end{align}
and 
\begin{align}
J_3:=&\frac{1}{e_N^2}\sum_{i_1\ne j_1\ne i_2\ne j_2} g(Z_{i_1})g(Z_{i_2}) \rho_{i_1j_1} \rho_{i_2j_2}
\label{J3}
\end{align}
Note that $J_3\leq \E(A_{12})^2$, therefore to bound $\Var(A_{12})$ it suffices to bound $J_1$ and $J_2$. To this end, 
\begin{align}
J_1\lesssim &\frac{1}{e_N^2}\sum_{1\leq i\ne j\leq N} |g(Z_i)|^2 \E\{ \delta_2(c_i, c_i')^2 (T_1^2(c_i, c_i', c_j)+T_2^2(c_i, c_i', c_j))\}\nonumber\\
\lesssim &\frac{1}{e_N^2}\sum_{i=1}^N  |g(Z_i)|^2\left(|d^\downarrow(Z_i, \sG(\cZ_N))|+|d^\uparrow(Z_i, \sG(\cZ_N))|\right)\nonumber\\
\lesssim &\frac{N(\Delta(\sG(\cZ_N)))}{e_N^2}\cdot\frac{1}{N}\sum_{i=1}^N  |g(Z_i)|^2\nonumber\\
=&O_P(e_N^{-1})
\label{J1bound}
\end{align}
and 
\begin{eqnarray}
J_2 &\lesssim &\frac{N\Delta(\sG(\cZ_N))^2}{e_N^2}\cdot \frac{1}{N}\sum_{i=1}^N  |g(Z_i)|^2=O_P(N^{-1}).
\label{J2bound}
\end{eqnarray}
Combining (\ref{J1bound}) and (\ref{J2bound}) it follows that $\Var(A_{23})=o_P(1)$.

Finally, the cross term $A_{13}$ can be bounded similarly. All the steps go through verbatim with the function $g(\cdot)$ replaced by 1.

\subsubsection{Bounding Main Terms} To begin with, note that 
\begin{align}
\Var(A_2)=\Var\left(\frac{1}{N}\sum_{i=1}^N |g(Z_i)|^2\delta_2(c_i, c_i')^2\right)&=\frac{1}{N^2}\sum_{i=1}^N 
|g(Z_i)|^4\Var(\delta_2(c_i, c_i')^2)\nonumber\\
&=O_P\left(\frac{1}{N}\right).
\label{A2}
\end{align}
Similarly, $\Var(A_3)=\Var\left(\frac{1}{N}\sum_{i=1}^N \delta_1(c_i, c_i')^2\right)=O_P\left(\frac{1}{N}\right)$.

It remains to bound $\Var(A_1)$. This is a standard, but tiresome calculation. We sketch the main steps below. 
\begin{align}
\E A_1=&\frac{N}{e_N^2}\sum_{i=1}^N\E\left(\sum_{j=1}^NT_0(c_i, c_i', c_j)\right)^2\nonumber\\
=&\frac{N}{e_N^2}\sum_{i=1}^N\left(\sum_{j=1}^N\E T_0^2(c_i, c_i', c_j)+\sum_{j_1\ne j_2}\E\{T_0(c_i, c_i', c_{j_1})T_0(c_i, c_i', c_{j_2})\}\right)\nonumber\\
=&\frac{N}{e_N^2}\sum_{i=1}^N\left(\sum_{j=1}^N\kappa_{ij}+\sum_{j_1\ne j_2}\kappa_{ij_1j_2}\right),
\label{EA1}
\end{align}
where $\kappa_{ij}:=\E T_0^2(c_i, c_i', c_j)$ and $\kappa_{ij_1j_2}:=\E\{T_0(c_i, c_i', c_{j_1})T_0(c_i, c_i', c_{j_2})\}$.

Next, 
\begin{eqnarray}
A_1^2&=&\frac{N^2}{e_N^4}\left(\sum_{i=1}^N\sum_{j=1}^NT_0^2(c_i, c_i', c_j)+\sum_{i=1}^N\sum_{j_1\ne j_2}T_0(c_i, c_i', c_{j_1})T_0(c_i, c_i', c_{j_2})\right)^2\nonumber\\
&=& K_1+K_2+K_3,
\end{eqnarray}
where 
\begin{align}
K_1&=\frac{N^2}{e_N^4}\left(\sum_{i, j\in [N]} T_0^2(c_i, c_i', c_j)\right)^2,\nonumber\\
K_2&=\frac{N^2}{e_N^4} \left(\sum_{\substack{i\in [N]\\j_1\ne j_2}}T_0(c_i, c_i', c_{j_1})T_0(c_i, c_i', c_{j_2})\right)^2,\nonumber\\
K_3&=\frac{2N^2}{e_N^4}\sum_{i, j\in [N]} T_0^2(c_i, c_i', c_j)\cdot \sum_{\substack{i\in [N]\\j_1\ne j_2}} T_0(c_i, c_i', c_{j_1})T_0(c_i, c_i', c_{j_2}).
\label{K1K2K3}
\end{align}
Expanding the square, $\E K_1=\E K_{11}+\E K_{12}+\E K_{13}$, where 
\begin{align}
\E K_{11}=\frac{N^2}{e_N^4} \sum_{i, j \in [N]}\E T_0^4(c_i, c_i', c_j)& \lesssim \frac{N^2}{e_N^4} \sum_{i, j \in [N]}\{\E T_1^4(c_i, c_i', c_j)+\E T_2^4(c_i, c_i', c_j)\}\nonumber\\
&=O_P(N^3/e_N^4)=O_P(1/N),
\label{K11}
\end{align}
and
\begin{align}
\E K_{12}=\frac{N^2}{e_N^4} \sum_{\substack{i_1,\\ j_1\ne j_2 \in [N]}}\E \{T_0^2(c_{i_1}, c_{i_1}', c_{j_1})T_0^2(c_{i_1}, c_{i_1}', c_{j_2})\}&=\frac{N^3 \Delta(\sG(\cZ_N))}{e_N^4}\nonumber\\
&=O_P(1/N),
\label{K12}
\end{align}
and $K_{13}=K_1-(K_{11}+K_{12})$ is the sum over distinct indices $i_1, i_2, j_1, j_2$ over the terms $$\E \{T_0^2(c_{i_1}, c_{i_1}', c_{j_1})T_0^2(c_{i_2}, c_{i_2}', c_{j_2})\}.$$

Similarly, expanding the square $\E(K_2)=\E(K_{21})+\E(K_{22})+\E(K_{23})+\E(K_{24})+\E(K_{25})$, where 
\begin{align}
\E K_{21}=\frac{N^2}{e_N^4} \sum_{\substack{i\in [N]\\j_1\ne j_2}}\E\{T_0^2(c_i, c_i', c_{j_1})T_0^2(c_i, c_i', c_{j_2})\}\lesssim & \frac{N^3\Delta^2(\sG(\cZ_N))}{e_N^4}\nonumber\\
=&O_P(1/N),
\label{K21}
\end{align}
and
\begin{align}
|\E K_{22}|\leq &\frac{N^2}{e_N^4} \sum_{\substack{i\in [N]\\j_1\ne j_2\ne j_3}}\E|T_0(c_i, c_i', c_{j_1})T_0^2(c_i, c_i', c_{j_2})T_0(c_i, c_i', c_{j_3})|\nonumber\\
\lesssim & \frac{N^3\Delta^3(\sG(\cZ_N))}{e_N^4}=O_P(1/N).
\label{K22}
\end{align}
and
\begin{align}
|\E K_{23}|:=&\frac{N^2}{e_N^4} \sum_{\substack{i\in [N]\\j_1\ne j_2\ne j_3\ne j_4}}\E|T_0(c_i, c_i', c_{j_1})T_0(c_i, c_i', c_{j_2})T_0(c_i, c_i', c_{j_3})T_0(c_i, c_i', c_{j_4})|\nonumber\\
\lesssim & \frac{N^3\Delta^4(\sG(\cZ_N))}{e_N^4}=O_P(1/N),
\label{K23}
\end{align}
and 
\begin{align}
K_{24}:=&\frac{N^2}{e_N^4} \sum_{\substack{i_1\ne i_2\\j_1, j_2, j_3}}\E|T(c_{i_1}, c_{i_1}', c_{j_1})T(c_{i_1}, c_{i_1}', c_{j_2})T(c_{i_2}, c_{i_2}' c_{j_2})T(c_{i_2}, c_{i_2}', c_{j_3})|\nonumber\\
\lesssim &\frac{N^4\Delta(\sG(\cZ_N))^2}{e_N^4} =O_P(1/\Delta(\sG(\cZ_N)))=o_P(1).
\label{K24}
\end{align}
and $K_{25}=K_2-(K_{21}+K_{22}+K_{23}+K_{24})$ is the remaining term, which corresponds to summing over distinct indices $i_1, i_2, j_1, j_2, j_3, j_4$  of the quantity $$\E\{T_0(c_{i_1}, c_{i_1}', c_{j_1})T_0(c_{i_1}, c_{i_1}', c_{j_2})T_0(c_{i_2}, c_{i_2}', c_{j_3})T_0(c_{i_2}, c_{i_2}', c_{j_4})\}.$$

The term $K_3$ can be expanded similarly, and all the terms can be shown to be negligible expect the term $\E K_{34}$ which corresponds to summing over distinct indices $i_1, i_2, j_1, j_2, j_3$ of $$\E\{T_0^2(c_{i_1}, c_{i_1}', c_{j_1})T_0(c_{i_2}, c_{i_2}', c_{j_2})T_0(c_{i_2}, c_{i_2}', c_{j_3})\}.$$  Now, from~\eqref{EA1} it follows that $\E(K_{13}+K_{26}+K_{34})\leq \E(A_1)^2$. This together with~\eqref{K1K2K3}-\eqref{K24} show that $\Var(A_1)=o_P(1)$.

\section{Proof of Theorem \ref{MSTAE}}
\label{mstpf}

In this section the asymptotic efficiency of the FR-test based on the MST will be derived using the formula in Theorem~\ref{ASYME}, which entails verifying Assumption~\ref{varundirectedcond}.

To this end, let $\cV_N=\{V_1, V_2, \ldots, V_N\}$ be i.i.d. from $\P_{\theta_1}$ with density $f(\cdot |\theta_1)$, and $\cT$ the MST graph functional as in~Definition~\ref{mst}. In this case, $\frac{N}{|E(\cT(\cV_N))|}\to 1$, and by Henze and Penrose \cite[Theorem 1]{henzepenrose}
$$\frac{T_2(\cT(\cV_N))}{N}=\frac{1}{N}\sum_{i=1}^N {d(V_i, \cT(\cV_N))\choose 2}\pto \frac{1}{2}\Var(D_d)+1,$$
where $D_d:=d(0, \cT(\cP_1))$ and $\cT(\cP_1)$ the minimum spanning forest of $\cP_1$ as defined by Aldous and Steele \cite{aldous_steele}. Therefore, $(\cT, \P_{\theta_1})$ satisfies the $(1, \frac{1}{2}\Var(D_d)+1)$ variance condition. 

Moreover, the degree of a vertex in the MST of a set of points in $\R^d$ is bounded by a constant $B_d$, depending only on $d$ \cite{aldous_steele}. Therefore, $\Delta(\cT(\cV_N))=O_P(1)$ and the normality condition N1 in Assumption~\ref{normalcondition} is satisfied.

It remains to verify the covariance condition. By Henze and Penrose \cite[Proposition 1]{henzepenrose}, for all almost all $z \in \cK$,  
\begin{equation*}
\lim_{N\rightarrow \infty}\E(\lambda(z, \cT(\cV_N)))=2,
\label{degMST}
\end{equation*}
establishing~\eqref{lambda} with $\lambda(z)=2$. Now, fix $M>0$ and define 
\begin{equation*}
K_M(z):=|\langle h, \eta (z, \theta_1)\rangle|\pmb 1\{ |\langle h, \eta (z, \theta_1)\rangle| \leq M\},
\label{truncate}
\end{equation*}
and $\overline K_M(z):=|\langle h, \eta (z, \theta_1)\rangle|\pmb 1\{ |\langle h, \eta (z, \theta_1)\rangle| > M\}.$ Since $\lambda(V_i, \cT(\cV_N))\leq \Delta(\cT(\cV_N))$, $$\frac{1}{N}\sum_{i=1}^N \overline{K}_M(Z_i)\lambda(V_i, \cT(\cV_N))\leq B_d \frac{1}{N}\sum_{i=1}^N \overline{K}_M(V_i)\pto B_d\E(\overline{K}_M(V_1)),$$ as $N\rightarrow \infty$. Therefore, as $N\rightarrow \infty$ followed by $M\rightarrow \infty$
\begin{eqnarray}
\frac{1}{N}\sum_{i=1}^N \overline{K}_M(V_i)\lambda(V_i, \cT(\cV_N)\pto 0,
\label{Mg}
\end{eqnarray}
since $\E(|\langle h, \eta (V_1, \theta_1)\rangle|)<\infty$ by Assumption~\ref{dist4}.

Recall $\cV_{N-1}=\cV_N\backslash\{V_1\}$. Then $N\rightarrow \infty$,  
\begin{align}
\frac{1}{N}\sum_{i=1}^N\E\left(K_M(V_i)\lambda(V_i, \cT(\cV_N))\right)=&\E\left(K_M(V_1) \lambda(V_1, \cT(\cV_N))\right)\nonumber\\
=&\int K_M(z) \E\left(\lambda(z, \cT(\cV_{N-1}))\right)f(z|\theta_1)\mathrm d z\nonumber\\
\rightarrow &2\int K_M(z)f(z|\theta_1)\mathrm d z.
\label{Ml}
\end{align}
using the Dominated Convergence Theorem, since $|K_M(z) \E\left(\lambda(z, \cT(\cV_{N-1}))\right)|\leq MB_d$.

Let $V_i'$ be an independent copy of $V_i$ and $\cV_N^{(i)}=(V_1, V_2, \ldots, V_i', \ldots, V_N)$, for $i\in [N]$, and define $F_M(\cV_N):=\frac{1}{N}\sum_{i=1}^N K_M(V_i)\lambda(V_i, \cT(\cV_N))$. Replacing a point in $\cV_N$ with a new point only changes the MST in the neighborhood of the two points, and by \cite[Lemma 2.1]{steeleshepp} it follows that 
\begin{eqnarray}
|F_M(\cV_N)-F_M(\cV_N^{(i)})|\lesssim &\frac{M B_d}{N}.
\label{diffb}
\end{eqnarray} 
Then, by the Efron-Stein inequality \cite{esineq}, 
\begin{eqnarray}
\Var(F_M(\cV_N))&=&\frac{1}{2}\sum_{i=1}^N\E(|F_M(\cV_N)-F_M(\cV_N^{(i)})|^2)=O(1/N).
\label{varMl}
\end{eqnarray}
Combining (\ref{Ml}) and (\ref{varMl}), $$\frac{1}{N}\sum_{i=1}^N K_M(V_i)\lambda(V_i, \cT(\cV_N)) \pto 2\int K_M(z)f(z|\theta_1)\mathrm d z.$$ Now, as $M\rightarrow \infty$, $ 2\int K_M(z)f(z|\theta_1)\mathrm d z\rightarrow 2\E(\langle h, \eta(V_1, \theta_1)\rangle)=0$, by the Dominated Convergence Theorem. Therefore, when $N\rightarrow \infty$ followed by $M\rightarrow \infty$,
\begin{equation}
\frac{1}{N}\sum_{i=1}^N\E\left(K_M(V_i)\lambda(V_i, \cT(\cV_N))\right) \pto 0.
\label{Mlm}
\end{equation}
By (\ref{Mg}) and (\ref{Mlm}) it follows that $\cT$ satisfies the covariance condition~\eqref{covundirected} with the constant function $\lambda(z)=2$. Thus, using the formula~\eqref{effundirected} in Theorem \ref{ASYMEundirected}, $\mathrm{AE}(\cT)=0$.

\section{Proof of Theorem \ref{POWERSTABILIZE}}
\label{stabilizepf}

Given a graph functional $\sG$, $\varphi(z, \sG(\cZ))$ is a measurable $\R^+$ valued function defined for all locally finite set $\cZ\subset \R^d$ and $z\in \cZ$. If $z\notin \cZ$, then  $\varphi(z, \sG(\cZ)):=\varphi(z, \sG(\cZ\cup\{z\}))$. The function $\varphi$ is {\it translation invariant} if $\varphi(y+z, \sG(y+\cZ))=\varphi(z, \sG(\cZ))$, and {\it scale invariant} if $\varphi(a z, \sG(a\cZ))=\varphi(z, \sG(\cZ))$, for all $y \in \R^d$ and $a\in \R^+$.  Similar to stabilizing graph functionals, Penrose and Yukich \cite{py} defined stabilizing functions of  graph functionals as follows:

\begin{defn}(Penrose and Yukich \cite{py}) \label{defn:stabilize}
For any locally finite point set $\cZ\subset \R^d$ and any integer $m \in \N$
$$\overline \varphi(\sG(\cZ), M):=\sup_{N\in \N}\left(\esssup_{\substack{\cA\subset \R^d\setminus B(0, M)\\|\cA|=N}}\left\{\varphi(0, \sG(\cZ\cap B(0, M) \cup \cA))\right\}\right)$$
and
$$\underline \varphi(\sG(\cZ), M):=\inf_{N\in \N}\left(\essinf_{\substack{\cA\subset \R^d\setminus B(0, M)\\|\cA|=N}}\left\{\varphi(0, \sG(\cZ\cap B(0, M) \cup \cA))\right\}\right),$$
where the essential supremum/infimum is taken with respect to the Lebesgue measure on $\R^{dN}$. The functional $\varphi$ is said to {\it stabilize} $\sG(\cZ)$ if
\begin{equation}\label{eq:stabilize}
\liminf_{M\rightarrow \infty}\underline \varphi(\sG(\cZ), M)=\limsup_{M\rightarrow \infty}\overline\varphi(\sG(\cZ), M)=\varphi (0, \sG(\cZ)).
\end{equation}
\end{defn}

Recall that $\cP_\lambda$ is a Poisson process of rate $\lambda$ in $\R^d$. If $\sG$ is a scale invariant graph functional $\sG(\cP_\lambda)$ is isomorphic to $\sG(\cP_1)$, since $\cP_\lambda=\lambda^{-\frac{1}{d}} \cP_1$.  Therefore, $\sG$ stabilizes $\cP_\lambda$, if it stabilizes $\cP_1$. Moreover, from the definition of stabilization it is immediate that whenever a graph functional $\sG$ is stabilizing (as in Definition~\ref{stabilization}), the degree functional $d(z, \sG(\cZ))$, the degree of the vertex $z$ in  the graph $\sG(\cZ\cup \{z\})$, is also stabilizing (in the sense of Definition~\ref{defn:stabilize} above):

\begin{obs}\label{stable} If $\sG$ is a translation and scale invariant graph functional in $\R^d$ which stabilizes $\cP_1$, then the degree function $d(z, \sG(\cP_{\lambda}))$ stabilizes $\sG(\cP_\lambda)$, for any $\lambda>0$. \hfill $\Box$
\end{obs}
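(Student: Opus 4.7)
The plan is to show that the a.s.\ finite stabilization radius $R$ furnished by Definition~\ref{stabilization} already freezes the degree of the origin \emph{pointwise} in the outside configuration, so that the essential supremum and infimum in Definition~\ref{defn:stabilize} collapse to the same constant $d(0,\sG(\cP_{\lambda,0}))$ for every $M\ge R$. Sending $M\to\infty$ and using $R<\infty$ a.s.\ will then give \eqref{eq:stabilize} for $\varphi=d$.

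By scale invariance of $\sG$, the map $x\mapsto \lambda^{-1/d}x$ turns $\cP_\lambda$ into an equidistributed copy of $\cP_1$ and induces a graph isomorphism $\sG(\cP_\lambda)\cong \sG(\cP_1)$ that preserves the degree at $0$. So without loss of generality one can work with $\cP_\lambda$ directly, taking $R$ to be the (transported, still a.s.\ finite) stabilization radius of Definition~\ref{stabilization}. Now fix $M\ge R$ and an \emph{arbitrary} finite $\sA\subset\R^d\setminus B(0,M)$. The key identity is
$$\cP_{\lambda,0}\cap B(0,M)\cup\sA \;=\; \bigl(\cP_{\lambda,0}\cap B(0,R)\bigr)\cup\sA',$$
where $\sA':=\bigl(\cP_{\lambda,0}\cap(B(0,M)\setminus B(0,R))\bigr)\cup\sA$. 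Local finiteness of $\cP_\lambda$ makes $\sA'$ a.s.\ finite, and by construction $\sA'\subset\R^d\setminus B(0,R)$. Applying the defining property of $R$ to the set $\sA'$ gives
$$E\bigl(0,\sG(\cP_{\lambda,0}\cap B(0,M)\cup\sA)\bigr)\;=\;E\bigl(0,\sG(\cP_{\lambda,0})\bigr),$$
and taking cardinalities yields $d(0,\sG(\cP_{\lambda,0}\cap B(0,M)\cup\sA))=d(0,\sG(\cP_{\lambda,0}))=d(0,\sG(\cP_\lambda))$.

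Because this equality holds for \emph{every} finite $\sA$ of any size (rather than merely Lebesgue-a.e.\ in $\R^{dN}$), the quantity $d(0,\sG(\cP_{\lambda,0}\cap B(0,M)\cup\sA))$ is the constant $d(0,\sG(\cP_\lambda))$ as a function of $\sA$, so both $\overline d(\sG(\cP_\lambda),M)$ and $\underline d(\sG(\cP_\lambda),M)$ in Definition~\ref{defn:stabilize} equal $d(0,\sG(\cP_\lambda))$ whenever $M\ge R$. Since $R<\infty$ almost surely, letting $M\to\infty$ gives
$$\liminf_{M\to\infty}\underline d(\sG(\cP_\lambda),M)\;=\;\limsup_{M\to\infty}\overline d(\sG(\cP_\lambda),M)\;=\;d(0,\sG(\cP_\lambda))\quad\text{a.s.,}$$
which is precisely \eqref{eq:stabilize} for the degree functional.

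There is no genuine obstacle here; the only real subtlety is noting that Definition~\ref{stabilization} gives a \emph{pointwise} (not merely almost-everywhere) invariance of the edge set incident to $0$ outside radius $R$, which is exactly what is needed to replace the essential sup/inf of Definition~\ref{defn:stabilize} by ordinary equality. Translation invariance of $\sG$ plays no role in the computation at $z=0$ itself; it is only needed to ensure that $d(\cdot,\sG(\cdot))$ is a bona fide translation-invariant functional in the sense required by Definition~\ref{defn:stabilize}.
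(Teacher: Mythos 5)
Your argument is correct and takes the only natural route: the paper states this observation without proof (as immediate from Definitions~\ref{stabilization} and~\ref{defn:stabilize}), and your decomposition $\sA'=\bigl(\cP_{\lambda,0}\cap(B(0,M)\setminus B(0,R))\bigr)\cup\sA$ for $M\ge R$ is precisely the routine verification that justifies that claim, including the correct observation that the pointwise invariance of $E(0,\cdot)$ dominates the essential sup/inf. One cosmetic slip: since $\cP_\lambda=\lambda^{-1/d}\cP_1$ in distribution, the map carrying $\cP_\lambda$ to an equidistributed copy of $\cP_1$ is $x\mapsto\lambda^{1/d}x$ rather than $x\mapsto\lambda^{-1/d}x$; the transported radius $\lambda^{-1/d}R$ is still a.s.\ finite, so nothing in the argument is affected.
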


\subsection{Proof of Theorem~\ref{POWERSTABILIZE}}

Let $\sG$ be an undirected graph functional in $\R^d$. To apply Theorem~\ref{ASYME} the undirected variance and covariance conditions in Assumption~\ref{varundirectedcond} have to be verified:  (The proof for directed graph functionals is similar. For notational simplicity, the proof is given only for undirected graph functionals.)

Begin by showing that $(\sG, \mathbb P_{\theta_1})$ satisfies the undirected variance condition. To this end, let $Z_1, Z_2, \ldots, $ be i.i.d. samples from $\P_{\theta_1}$ with density $f(\cdot|\theta_1)$ in $\R^d$, and $\cZ_N=\{Z_1, Z_2, \ldots, Z_N\}$. 

\begin{obs}\label{varcondstabilize}Let $\sG$  and $\P_{\theta_1}$ be as in Theorem~\ref{POWERSTABILIZE}. Then $(\sG, \mathbb P_{\theta_1})$ satisfies the $(\gamma_0, \gamma_1)$-undirected variance condition with
\begin{equation}\label{gammastabilize}
\gamma_0=\frac{2}{\E(d(0, \sG(\cP_1)))} \quad \text{and} \quad \gamma_1= \gamma_0^2 \E{d(0, \sG(\cP_1))\choose 2}.
\end{equation}
\end{obs}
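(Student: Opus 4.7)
The plan is to reduce both limits in the $(\gamma_0,\gamma_1)$-undirected variance condition to a single weak law of large numbers for stabilizing functionals of a random geometric graph, and then apply the Penrose--Yukich framework \cite{py}. First I would write
$$|E(\sG(\cV_N))|=\tfrac{1}{2}\sum_{i=1}^N d(V_i,\sG(\cV_N)), \qquad T_2(\sG(\cV_N))=\sum_{i=1}^N\binom{d(V_i,\sG(\cV_N))}{2},$$
so both quantities take the form $\sum_{i=1}^N \varphi(V_i,\sG(\cV_N))$, for $\varphi(z,\sG(\cdot))=d(z,\sG(\cdot))$ and $\varphi(z,\sG(\cdot))=\binom{d(z,\sG(\cdot))}{2}$ respectively. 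By Observation~\ref{stable} the degree functional stabilizes $\sG(\cP_\lambda)$ in the sense of Definition~\ref{defn:stabilize}, and the same holds for $\binom{d(\cdot,\sG(\cdot))}{2}$ since it is a fixed measurable function of the degree.

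Next I would apply the Penrose--Yukich weak law of large numbers for stabilizing functionals \cite{py}, which yields
$$\frac{1}{N}\sum_{i=1}^N \varphi(V_i,\sG(\cV_N))\pto \int_{\R^d} \E\bigl[\varphi(0,\sG(\cP_{f(z|\theta_1)}))\bigr]\,f(z|\theta_1)\,\mathrm{d}z,$$
provided $\varphi$ satisfies a uniform $L^s$ bound for some $s>1$. The hypothesis~\eqref{degcond} with $s>4$ supplies this for $\varphi=d$ directly, and for $\varphi=\binom{d}{2}\leq d^2$ through the uniform $L^{s/2}$ bound on $d^2$ (with $s/2>2>1$). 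Now scale invariance of $\sG$ implies $\sG(\cP_\lambda)\cong \sG(\cP_1)$ for every $\lambda>0$, hence $\E[d(0,\sG(\cP_{f(z|\theta_1)}))]=\E[d(0,\sG(\cP_1))]$ and likewise for $\binom{d}{2}$ at almost every $z$. Since $\int f(z|\theta_1)\,\mathrm{d}z=1$, both limit integrals collapse to the constant expectation, giving
$$\frac{|E(\sG(\cV_N))|}{N}\pto \tfrac{1}{2}\E[d(0,\sG(\cP_1))],\qquad \frac{T_2(\sG(\cV_N))}{N}\pto \E\binom{d(0,\sG(\cP_1))}{2}.$$

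Combining the two displays via the continuous mapping theorem (the denominator limit is strictly positive whenever $\sG$ is nontrivial) then gives $\frac{N}{|E(\sG(\cV_N))|}\pto \gamma_0$ and $\frac{N\,T_2(\sG(\cV_N))}{|E(\sG(\cV_N))|^2}\pto \gamma_1$ with the stated formulas. The main obstacle is verifying the uniform integrability required by the Penrose--Yukich weak law when $\varphi=\binom{d}{2}$, which grows quadratically in the degree; this is precisely why the hypothesis asks for $s>4$ rather than the minimal $s>1$, so that $d^2$ enjoys a uniform $L^{s/2}$ bound with $s/2>2$ and the desired convergence in probability follows.
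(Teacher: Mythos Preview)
The proposal is correct and follows essentially the same approach as the paper's proof: both invoke the Penrose--Yukich weak law of large numbers for stabilizing functionals (Theorem 2.1 in \cite{py}) applied to the degree and to $\binom{d}{2}$, use scale invariance of $\sG$ to collapse the limiting integral to $\E d(0,\sG(\cP_1))$ (respectively $\E\binom{d(0,\sG(\cP_1))}{2}$), and then take ratios. Your write-up is in fact slightly more careful than the paper's, spelling out why $\binom{d}{2}$ inherits stabilization from $d$, why the moment hypothesis $s>4$ is exactly what is needed to get the uniform $L^{s/2}$ bound for the quadratic functional, and invoking the continuous mapping theorem explicitly for the ratios.
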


\begin{proof} Since the degree function stabilizes the Poisson process $\cP_\lambda$ (Observation~\ref{stable}), by \cite[Theorem 2.1]{py} and~\eqref{degcond} $$\frac{1}{N}\sum_{i=1}^N d(Z_i, \sG(\cZ_N))\pto \int \E(d(0, \sG(\cP_{f(z|\theta_1)})))f(z|\theta_1)\mathrm d z=\E(d(0, \sG(\cP_1))),$$
since $\sG$ is scale invariant. This implies $\gamma_0=\frac{2}{\E(d(0, \sG(\cP_1)))}$. 

Similarly, by \cite[Theorem 2.1]{py} $$\frac{1}{N}\sum_{i=1}^N {d(Z_i, \sG(\cZ_N))\choose 2}\pto \E{d(0, \sG(\cP_1))\choose 2},$$
and~\eqref{gammastabilize} follows.
\end{proof}

To complete the proof of the theorem it suffices to show $(\sG, \mathbb P_{\theta_1})$ satisfies the undirected covariance condition with the constant function $\lambda(z)=2$, for $z\in \R^d$.

\begin{ppn}\label{stabilizelimit} Let $\sG$  and $\P_{\theta_1}$ be as in Theorem~\ref{POWERSTABILIZE}. Then $(\sG, \mathbb P_{\theta_1})$ satisfies the covariance condition~\eqref{covundirected} with $\lambda(z)=2$.
\end{ppn}

\subsubsection{Proof of Proposition~\ref{stabilizelimit}}

Recall $\lambda(Z_j, \sG(\cZ_N))=\frac{N d(Z_j, \sG(\cZ_N))}{|E(\sG(\cZ_N))|}$. Since $\frac{N}{|E(\sG(\cZ_N))|}\pto \gamma_0$ as in~\eqref{gammastabilize}, it suffices to show~\eqref{covundirected} with $\lambda$ replaced by the degree functional $d(\cdot, \sG(\cdot))$.

Fix a point $z \in \cK$ (recall that $\cK$ is the support of $f(\cdot|\theta_1)$ which does not depend on $\theta_1$, by assumption). Let $\tilde \cP_1$ be a homogeneous Poisson process of rate 1 on $\R^d \times [0,\infty)$. As in the proof of \cite[Proposition 3.1]{py} define coupled point processes $\tilde \cP(N)$, $\cZ'_{N-1}$, and $\cH_N^z$ and a random variable $\zeta_N$ as follows:  Let $\tilde \cP(N)$ be the image of the restriction of $\tilde \cP_1$ to the set
$$\{(x, t) \in \R^d \times [0,\infty) : t \leq N f (x|\theta_1)\},$$
under the projection $(x, t) \rightarrow x$. Then $\tilde \cP(N)$ is a Poisson process in $\R^d$ with intensity function $Nf(\cdot|\theta_1)$, consisting of $C_N$ random points with common density $f(\cdot|\theta_1)$. Discard $(C_N- (N -1))_+$ of these points chosen at random and add $((N - 1)-C_N)_+$ extra independent points with common density $f(\cdot|\theta_1)$. The resulting set of points 
$\cZ'_{N-1}$, has the same distribution as  $\cZ_{N-1}=\cZ_N\backslash \{Z_1\}$. Let $\cH_N^z$ be the restriction of $\tilde \cP_1$ to the set $\{(x, t) : t \leq N f (z|\theta_1)\},$ under the mapping $(x, t)\rightarrow N^{1/d}(x - z)$. Then $\cH_N^z$ is a homogeneous Poisson process on $\R^d$ of intensity $f(z|\theta_1)$. 

Define $\zeta(z)=d (0, \sG(\cH_N^z))$ as the degree function for the point process $\cH_N^z$. Since $\sG$ is translation invariant, 
\begin{equation}
\zeta(z)\stackrel{D}=d(0, \sG(\cP_{f(z|\theta_1)}))\stackrel{D}=d(0, \sG(f(z|\theta_1)^{\frac{1}{d}}\cP_1))=d(0, \sG(\cP_1)), 
\label{zeta}
\end{equation}
which does not depend on $z$.

Since $\sG$ stabilizes $\cP_1$, by Observation \ref{stable}, $\sG$ stabilizes $\cP_{\lambda}$, for all $\lambda \in (0, \infty)$. Therefore, $ d (z, \sG(\cZ'_{N-1}))$ stabilizes $\cP_{\lambda}$, for all $\lambda \in (0, \infty)$ (Observation \ref{stable}). Note that $d(z, \sG(\cZ'_{N-1}))=d(0, \sG(N^{\frac{1}{d}}(\cZ'_{N-1}-z)))$, and from \cite[Lemma 3.2]{py} it follows that $d (z, \sG(\cZ_{N-1}')) \pto \zeta(z)$. By uniformly integrability (since~\eqref{degcond} holds), $\E d (z, \sG(\cZ_{N-1})) \rightarrow \E\zeta(z)$, since $\cZ'_{N-1}$ and $\cZ_{N-1}$ have the same distribution. Therefore, by the Dominated Convergence theorem, 
\begin{align}
\E\left(\frac{1}{N}\sum_{j=1}^{N}\langle h, \eta(Z_j, \theta_1)\rangle d(Z_j, \sG(\cZ_N))\right)=&\E\left(\langle h, \eta(Z_1, \theta_1)\rangle d(Z_1, \sG(\cZ_N))\right)\nonumber\\
=&\int \langle h, \eta(z, \theta_1)\rangle \E d(z, \sG(\cZ_{N-1}))  f(z|\theta_1)\mathrm dz\nonumber\\ 
\rightarrow &\int \langle h, \grad  f(z|\theta_1)\rangle \E \zeta(z) \mathrm d z\nonumber\\
=&\E \zeta(z) \int \langle h, \grad  f(z|\theta_1)\rangle  \mathrm d z\nonumber\\
=&0,
\label{expecstabilize}
\end{align}
since by (\ref{zeta}) the distribution of $\zeta(z)$ does not depend on $z$. 

It remains to prove that the convergence in~\eqref{expecstabilize} is in probability. This follows if 
\begin{align} 
\Cov\left(\langle h, \eta(Z_{1}, \theta_1)\rangle d(Z_{1}, \sG(\cZ_N)), \langle h, \eta(Z_{2}, \theta_1)\rangle d(Z_{2}, \sG(\cZ_N))\right) \rightarrow 0.
\label{covupstabilize}
\end{align}
By (\ref{expecstabilize}) it suffices to show 
\begin{equation}
\E \left\{ \langle h, \grad f(Z_1| \theta_1)\rangle  \langle h, \grad f(Z_2| \theta_1)\rangle d(Z_1, \sG(\cZ_{N}))d(Z_2, \sG(\cZ_{N}))\right\}  \rightarrow 0.
\label{expprod1}
\end{equation}
To show this define coupled point processes $\cZ_{N-2}'$ (with the same distribution as $\cZ_{N-2}=\cZ_N\backslash \{Z_1, Z_2\}$), and independent Poisson processes $\cH_N^x$ and $\cH_N^y$  with intensity $\cP_{f(x)}$ and $\cP_{f(y)}$ in $\R^d$, respectively (as in \cite[Proposition 3.1]{py}). As before, denote $\zeta(x)=d(0, \sG(\cH_N^x))$ and $\zeta(y)=d(0, \sG(\cH_N^y))$. Then from the proof of \cite[Proposition 3.1]{py}, it follows that 
\begin{align}\label{exppprodconv}
d(z_1, \sG(\cZ_{N-1})) d(z_2, \sG(\cZ_{N-1}))\pto & \zeta(z_1) \zeta(z_2).
\end{align}
Now, using (\ref{degcond}) and the Cauchy-Schwarz inequality 
$$\E(d(z_1, \sG(\cZ_{N-1})) d(z_2, \sG(\cZ_{N-1})))^{s/2}< \infty,$$ for some $s>2$. Therefore, the LHS of~\eqref{exppprodconv} is uniformly integrable and the expectation converges:
\begin{align*}
\E (d(z_1, \sG(\cZ_{N}))d(z_2, \sG(\cZ_{N}))) \rightarrow&\E \zeta(z_1)  \E \zeta(z_2),
\end{align*}   
where the last step uses the independence of $\zeta(z_1)$ and $\zeta(z_2)$.   Then by the Dominated Convergence Theorem, the LHS of~\eqref{expprod1} converges to $$\E \zeta(z_1)\E \zeta(z_2) \int \int \langle h, \grad  f(z_1|\theta_1)\rangle \langle h, \grad  f(z_2|\theta_1)\rangle  \mathrm d z_1\mathrm d z_2=0,$$
and the covariance condition is verified.

\section{Proof of Proposition \ref{ppn:knn}}
\label{sec:pfknn}

We will prove this result using Corollary \ref{ASYMEundirected}, which entails verifying Assumption \ref{varundirectedcond}.  Note that the normality condition in Assumption \ref{varundirectedcond} has been already verified in \eqref{eq:KNN_N2condition}. Therefore, it suffices to verify the undirected variance and covariance conditions. To this end, assume that $K=K_N \rightarrow \infty$ and $\cV_N=\{V_1, V_2, \ldots, V_N\}$ are i.i.d. observations from $f(\cdot|\theta_1)$. Then, 
\begin{align*}
\frac{\E |E(\cN_{K_N}(\cV_N))|}{{K_N}N} =\frac{ \E d(V_1, \cN_{K_N}(\cV_N))}{2 {K_N}} &= \frac{1}{2}\int \frac{\E d(z, \cN_{K_N}(\cV_{N-1}))}{{K_N}} f(z|\theta_1) \mathrm d z \nonumber \\
& \rightarrow \frac{1}{2} \int \eta_0(z) f(z|\theta_1) \mathrm dz,
\end{align*}
where the last step uses \eqref{eq:z01} and the Dominated Convergence Theorem (since for all $z\in \R^d$, $d(z, \cN_{K_N}(\cV_{N-1})) \leq C_d K_N$). Now, to show convergence in probability, we need to bound the variance of $\frac{|E(\cN_{K_N}(\cV_N))|}{{K_N}N}$. To this end, let $V_i'$ be an independent copy of $V_i$ and $\cV_N^{(i)}=(V_1, V_2, \ldots, V_i', \ldots, V_N)$, for $i\in [N]$, and define $F(\cV_N):=\frac{1}{2 {K_N}N}\sum_{i=1}^N d(V_i, \cN_{K_N}(\cV_N))$. Note that adding or deleting a point changes the degree of a vertex by $O({K_N})$ and the degree of $O({K_N})$ vertices by 1. Therefore, 
\begin{eqnarray*}
|F(\cV_N)-F(\cV_N^{(i)})|\lesssim &\frac{C_d}{N}.
\end{eqnarray*}
Then, by the Efron-Stein inequality \cite{esineq}, 
\begin{eqnarray*}
\Var(F(\cV_N))&=&\frac{1}{2}\sum_{i=1}^N\E(|F_M(\cV_N)-F_M(\cV_N^{(i)})|^2)=O(1/N).
\end{eqnarray*} 
This shows 
\begin{align}\label{eq:KNNvar1}
\frac{|E(\cN_{K_N}(\cV_N))|}{{K_N}N} \stackrel{L_2} \rightarrow \frac{1}{2} \int \eta_0(z) f(z|\theta_1) \mathrm dz, 
\end{align} 
which implies $\frac{N}{|E(\sG(\cV_N))|} \pto 0$. Similarly, it can be argued that $$\frac{|T_2(\sG(\cV_N))|}{K_N^2 N}=\frac{1}{N}\sum_{i=1}^N \frac{1}{K_N^2} {d(V_i, \cN_{K_N}(\cV_N)) \choose 2} \pto \int \eta_1(z) f(z|\theta_1) \mathrm dz,$$ hence,  
\begin{align*}
\frac{N|T_2(\sG(\cV_N))|}{|E(\sG(\cV_N))|^2} = \frac{K_N^2 N^2}{|E(\sG(\cV_N))|^2}\cdot \frac{|T_2(\sG(\cV_N))|}{K_N^2 N} \pto  \gamma_1:=\frac{4 \int \eta_1(z) f(z|\theta_1) \mathrm dz}{\left(\int \eta_0(z) f(z|\theta_1) \mathrm dz \right)^2}.
\end{align*}
This shows, the $K$-NN test satisfies the $(0, \gamma_1)$-undirected variance condition in Assumption \ref{varundirectedcond}.

To verify the undirected covariance condition, note that by assumption \eqref{eq:z01} and \eqref{eq:KNNvar1},  
\begin{align}\label{eq:KNNcov_I}
\E \lambda(z, \cN_{K_N}(\cV_N))=\E \frac{Nd(z, \cN_{K_N}(\cV_N^z))}{|E(\cN_{K_N}(\cV_N^z))|} \rightarrow \frac{\eta_0(z)}{\frac{1}{2} \int \eta_0(z) f(z|\theta_1) \mathrm dz}.
\end{align}
Recall, $\cV_{N-1}=\cV_N\backslash\{V_1\}$. Then as $N\rightarrow \infty$,  
\begin{align}\label{eq:KNNcov_II}
\frac{1}{N}\sum_{i=1}^N\E\left(\langle h, \grad \eta(V_i, \theta_1) \rangle \lambda(V_i, \cN_{K_N}(\cV_N))\right)=&\E\left( \langle h, \grad \eta(V_1, \theta_1) \rangle  \lambda(V_1, \cT(\cV_N))\right)\nonumber\\
=&\int \langle h, \grad \eta(z, \theta_1) \rangle \E\left(\lambda(z, \cN_{K_N}(\cV_{N-1}))\right)f(z|\theta_1)\mathrm d z\nonumber\\
\rightarrow & \frac{\int \eta_0(z) \grad  f(z|\theta_1) \mathrm dz }{\frac{1}{2} \int \eta_0(z) f(z|\theta_1) \mathrm dz},
\end{align}
where the last step uses Dominated Convergence Theorem, since, for all $z \in \cK$,  
$$| \langle h, \eta(z, \theta_1) \rangle  \E\left(\lambda(z, \cN_{K_N}(\cV_{N-1}))\right)|\leq C_d | \langle h,  \eta(z, \theta_1) \rangle |,$$ and $\E | \langle h,  \eta(V_1, \theta_1) \rangle| < \infty$ by Assumption \ref{dist4}. Finally, by another application of the Efron-Stein inequality, as in the proof of Theorem \ref{MSTAE}, it follows that the convergence in \eqref{eq:KNNcov_II} is in probability, which establishes the covariance condition in Assumption \ref{varundirectedcond} with $\lambda(\cdot)$ as in \eqref{eq:KNNcov_I}.

\section{Proof of Theorem~\ref{POWERDEPTH}}
\label{depthpf}

Let $D$ be a good depth function (recall Definition~\ref{depthconditions}) and $\sG_D$ be the associated graph. Begin by showing that $(\sG, \mathbb P_{\theta_1})$ satisfies the variance condition (Assumption~\ref{varcondition}).

\begin{obs}\label{depthpower}
Let $\P_{\theta_1}$ be as in~Assumption~\ref{dist4} with distribution function $F_{\theta_1}$. Then $(\sG_D, \P_{\theta_1})$ satisfies the $(0, 0, \frac{2}{3}, \frac{2}{3}, \frac{2}{3})$-variance condition.
\end{obs}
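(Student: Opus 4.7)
The plan is to exploit the very rigid structure of $\sG_D(\cV_N)$: by construction, it is the complete directed graph on $\cV_N$ with an edge $(V_i, V_j)$ whenever $D(V_i, F_{N_1}) \leq D(V_j, F_{N_1})$, so all of its combinatorics is determined by the rank-ordering of the $N$ empirical depth values $\{D(V_i, F_{N_1})\}_{i \in [N]}$.

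First I would argue that, with probability tending to one, these $N$ depth values are mutually distinct. Under condition A1 (and A3 to transfer continuity from $F_{\theta_1}$ to $F_{N_1}$ as needed), the event that any two empirical depth values coincide has probability $o(1)$. On this event, $\sG_D(\cV_N)$ is a transitive tournament: each unordered pair carries a single directed edge dictated by which endpoint has the larger depth. Hence $|E(\sG_D(\cV_N))| = \binom{N}{2}$ and $|E^+(\sG_D(\cV_N))| = 0$, which immediately gives $\beta_0 = \beta_0^+ = 0$ since $N/\binom{N}{2} = 2/(N-1) \to 0$.

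Next, let $R_i \in \{1, \ldots, N\}$ denote the rank of $D(V_i, F_{N_1})$ from smallest to largest. Then $d^{\downarrow}(V_i, \sG_D(\cV_N)) = R_i - 1$ and $d^{\uparrow}(V_i, \sG_D(\cV_N)) = N - R_i$, and $(R_i)_{i \in [N]}$ is a permutation of $[N]$. This yields
\begin{align*}
T_2^{\downarrow}(\sG_D(\cV_N)) = \sum_{k=1}^N \binom{k-1}{2} = \binom{N}{3}, \quad T_2^{\uparrow}(\sG_D(\cV_N)) = \sum_{k=1}^N \binom{N-k}{2} = \binom{N}{3},
\end{align*}
and the identity $\sum_{k=1}^N (k-1)(N-k) = \binom{N}{3}$ gives $T_2^{+}(\sG_D(\cV_N)) = \binom{N}{3}$ as well. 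Dividing by $|E|^2 = \binom{N}{2}^2$ and multiplying by $N$,
\begin{align*}
\frac{N \binom{N}{3}}{\binom{N}{2}^2} = \frac{2(N-2)}{3(N-1)} \longrightarrow \frac{2}{3},
\end{align*}
delivering $\beta_1^{\uparrow} = \beta_1^{\downarrow} = \beta_1^{+} = 2/3$.

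The only nontrivial ingredient is the almost-sure distinctness of the empirical depth values; everything else is deterministic and combinatorial once distinctness is in hand. I expect distinctness to be the main obstacle, since $D(\cdot, F_{N_1})$ need not be a continuous function in its first argument for every good depth function (for example, halfspace depth applied to an empirical measure takes only finitely many values). A safer route would verify that any ties among the $D(V_i, F_{N_1})$'s contribute only $o_P(N^2)$ to $|E|$ and $o_P(N^3)$ to each of $T_2^{\uparrow}, T_2^{\downarrow}, T_2^{+}$, so that the leading-order asymptotics above are unaffected and the stated parameters persist.
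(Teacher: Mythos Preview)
Your proposal is correct and follows essentially the same route as the paper: both arguments use that distinct depth values make $\sG_D(\cV_N)$ a transitive tournament, then compute $T_2^{\uparrow}$, $T_2^{\downarrow}$, $T_2^{+}$ via the rank structure (the paper writes $\sum_{i=1}^N\binom{N-i}{2}\sim N^3/6$ where you write the exact value $\binom{N}{3}$). Your caveat about ties in the empirical depths $D(V_i,F_{N_1})$ is a legitimate concern that the paper simply elides by invoking (A1) for the population depth; your suggested fix (bounding the tie contribution by $o_P(N^2)$ and $o_P(N^3)$) is the right way to make this rigorous.
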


\begin{proof} Let $\cV_N=\{V_1, V_2, \ldots, V_N\}$ be i.i.d. from $\mathbb P_{\theta_1}$. Since $D(\cdot, F_{\theta_1})$ is a continuous depth function ((A1) in Definition~\ref{depthconditions}), $\P(D(V_1, F_{\theta_1})=D(V_2, F_{\theta_1}))=0$. 
This implies that $T_2^\uparrow(\sG_D)=T_2^\downarrow(\sG_D)\sim \sum_{i=1}^N {N-i\choose 2}\sim \frac{N^3}{6}$, 
and $T_2^+(\sG_D)\sim \sum_{i=1}^N (i-1)(N-i)\sim  \frac{N^3}{6}$. Therefore, $(\sG_D, \P_{\theta_1})$ satisfies the variance condition~\ref{varcondition} with $\beta_0=\beta_0^+=0$  and $\beta^\uparrow=\beta^\downarrow=\beta^+=\frac{2}{3}$. 
\end{proof}

Note that the total degree of every vertex in the graph $\sG_D$ is $N-1$. Therefore, $\sG_D$ satisfies the normality condition N2 in Assumption~\ref{normalcondition}. Thus, it remains to verify the covariance condition.  Recall the definition of relative outlyingness $R(\cdot, F_{\theta_1})$ from~\eqref{outlyingness}.

\begin{lem}\label{eq:depthlimit} Let $\cV_N=\{V_1, V_2, \ldots, V_N\}$ be i.i.d. from $\mathbb P_{\theta_1}$ and $z \in \cK$. Then for any good depth function $D$~(recall Definition~\ref{depthconditions}), 
\begin{align}\label{lambdaD}
\lim_{N\rightarrow \infty}\E \lambda^{\uparrow}(x, \sG_D(\cV_N)) &=  2 R(x, F_{\theta_1}),\nonumber\\
\lim_{N\rightarrow \infty} \E \lambda^{\downarrow}(x, \sG_D(\cV_N)) &=2 (1-R(x, F_{\theta_1})).
\end{align}
Moreover,
\begin{eqnarray}\label{upD}
\frac{1}{N}\sum_{j=1}^{N}\langle h, \eta(V_j, \theta_1)\rangle\lambda^{\uparrow}(V_j, \sG_D) \pto 2 \int \langle h, \grad f(x|\theta_1)\rangle R(x, F_{\theta_1}) \mathrm dx,
\end{eqnarray}
and 
\begin{eqnarray}\label{downD}
\frac{1}{N}\sum_{j=1}^{N}\langle h, \eta(V_j, \theta_1)\rangle\lambda^{\downarrow}(V_j, \sG_D) \pto 2 \int \langle h, \grad f(x|\theta_1)\rangle (1-R(x, F_{\theta_1})) \mathrm dx.
\end{eqnarray}
\end{lem}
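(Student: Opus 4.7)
The plan is to rewrite $\lambda^{\uparrow}$ and $\lambda^{\downarrow}$ as explicit sums of depth-comparison indicators, replace the empirical depth $D(\cdot,\widehat F)$ (where $\widehat F$ is whichever empirical distribution defines $\sG_D$) by the population depth $D(\cdot,F_{\theta_1})$ using Assumptions~(A2)--(A3), and then pass to the limit by (i) a direct i.i.d.\ expectation calculation for the first claim and (ii) a U-statistic law of large numbers for the second.

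Since $D(\cdot,F_{\theta_1})$ has a continuous distribution by~(A1), all depth values in $\cV_N\cup\{x\}$ are almost surely distinct, so $\sG_D(\cV_N\cup\{x\})$ is a tournament with $|E|=\binom{N+1}{2}$. The orientation convention of $\sG_D$ gives
\[
\lambda^{\uparrow}(x,\sG_D(\cV_N))=\frac{2}{N+1}\sum_{j=1}^{N}\mathbf 1\{D(V_j,\widehat F)\le D(x,\widehat F)\},
\]
together with the identity $\lambda^{\uparrow}(x)+\lambda^{\downarrow}(x)=\tfrac{2N}{N+1}\to 2$. By~(A3) the quantity $\varepsilon_N:=\sup_y|D(y,\widehat F)-D(y,F_{\theta_1})|$ vanishes almost surely and in expectation; hence each indicator differs from its population analog by at most $\mathbf 1\{|D(V_j,F_{\theta_1})-D(x,F_{\theta_1})|\le 2\varepsilon_N\}$, whose expectation is $O(\E\varepsilon_N)=o(1)$ by~(A2). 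Taking expectations,
\[
\E\lambda^{\uparrow}(x,\sG_D(\cV_N))\to 2\,\P_{F_{\theta_1}}(D(V_1,F_{\theta_1})\le D(x,F_{\theta_1}))=2R(x,F_{\theta_1}),
\]
and the limit $\E\lambda^{\downarrow}(x,\sG_D(\cV_N))\to 2(1-R(x,F_{\theta_1}))$ follows from the identity above.

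For~\eqref{upD}, observe that (since $V_j\in\cV_N$) $\lambda^{\uparrow}(V_j,\sG_D(\cV_N))=\frac{2}{N-1}\sum_{k\ne j}\mathbf 1\{D(V_k,\widehat F)\le D(V_j,\widehat F)\}$, so
\[
\frac 1N\sum_{j=1}^N\langle h,\eta(V_j,\theta_1)\rangle\lambda^{\uparrow}(V_j,\sG_D(\cV_N))=\frac{2}{N(N-1)}\sum_{j\ne k}\langle h,\eta(V_j,\theta_1)\rangle\mathbf 1\{D(V_k,\widehat F)\le D(V_j,\widehat F)\}.
\]
Replacing $\widehat F$ by $F_{\theta_1}$ introduces an error of at most
\[
\frac{2}{N(N-1)}\sum_{j\ne k}|\langle h,\eta(V_j,\theta_1)\rangle|\,\mathbf 1\{|D(V_k,F_{\theta_1})-D(V_j,F_{\theta_1})|\le 2\varepsilon_N\},
\]
which is $o_P(1)$ via Cauchy--Schwarz, (A2), and the fourth-moment bound on the score given by Assumption~\ref{dist4}. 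The remaining double sum is a bivariate U-statistic with square-integrable kernel $K(v,w):=\langle h,\eta(v,\theta_1)\rangle\mathbf 1\{D(w,F_{\theta_1})\le D(v,F_{\theta_1})\}$, and the standard $L^2$ law of large numbers for U-statistics yields
\[
\frac{2}{N(N-1)}\sum_{j\ne k}K(V_j,V_k)\pto 2\,\E[\langle h,\eta(V_1,\theta_1)\rangle R(V_1,F_{\theta_1})].
\]
Since $\eta(v,\theta_1)f(v|\theta_1)=\grad f(v|\theta_1)$, the right-hand side equals $2\int\langle h,\grad f(x|\theta_1)\rangle R(x,F_{\theta_1})\mathrm dx$, which is~\eqref{upD}. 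The companion statement~\eqref{downD} then follows from the exact identity $\lambda^{\uparrow}(V_j)+\lambda^{\downarrow}(V_j)=2$, the weak law $\frac 1N\sum_j\langle h,\eta(V_j,\theta_1)\rangle\to 0$, and the vanishing of $\int\grad f(x|\theta_1)\mathrm dx$.

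The main technical obstacle will be the uniform replacement of $\widehat F$ by $F_{\theta_1}$ inside the double sum in the presence of the unbounded score $\eta(V_j,\theta_1)$; a truncation in the spirit of~\eqref{truncate} from Appendix~\ref{mstpf} (splitting by $|\langle h,\eta(V_j,\theta_1)\rangle|\le M$ versus $>M$ and sending $N\to\infty$ before $M\to\infty$), combined with~(A2), (A3), and the moment bound of Assumption~\ref{dist4}, should suffice to make the control uniform in $j$ and $k$.
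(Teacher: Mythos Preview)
Your approach is correct and coincides with the paper's: both express $\lambda^{\uparrow}(x)$ as (twice) an average of depth-comparison indicators and use the key bound
\[
\bigl|\mathbf 1\{D(x,\widehat F)\le D(V_1,\widehat F)\}-\mathbf 1\{D(x,F_{\theta_1})\le D(V_1,F_{\theta_1})\}\bigr|\le \mathbf 1\{|D(x,F_{\theta_1})-D(V_1,F_{\theta_1})|\le 2\varepsilon_N\}
\]
together with (A2)--(A3) to pass from the empirical to the population depth. For the second claim the paper shows the mean converges (via part one and dominated convergence) and then that the pairwise covariances of the summands vanish by a two-variable version of the same replacement argument, which is exactly the content of your U-statistic law of large numbers after your global depth replacement; the truncation you flag as a fallback is not actually needed, since Cauchy--Schwarz plus (A2) already controls the error term.
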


\begin{proof} Note that $d^{\uparrow}(x, \sG_D(\cV_N))=\sum_{i=1}^N\pmb 1 \{(x, V_i)\in  E(\sG_D(\cV_N)) \}$. This implies 
\begin{align}\label{upx}
\E \lambda^{\uparrow}(x, \sG_D(\cV_N))=&\frac{N(N-1)}{|E(\sG_D(\cV_N))|}\E(\pmb 1\{D(x, F_{N_1})\leq D(V_1, F_{N_1})\})\nonumber\\
=&2\E \mathrm I(x, V_1, F_{N_1}),
\end{align}
where $\mathrm I(x, V_1, H):=\pmb 1\{D(x, H)\leq D(V_1, H)\}$, for any distribution function $H$ in $\R^d$, and $F_{N_1}=\frac{1}{N_1}\sum_{i=1}^{N_1}\delta_{V_i}$ is the empirical measure.  It can be verified that 
\begin{align}
& |\mathrm I(x, V_1, F_{N_1})-\mathrm I(x, V_1, F_{\theta_1})| \nonumber\\
& \leq \pmb 1\{|D(x, F_{\theta_1})-D(V_1, F_{\theta_1})|\leq 2\sup_{x\in \R^d} |D(x, F_{N_1})-D(x, F_{\theta_1})|\}. \nonumber
\end{align}
Therefore, by (A2) and (A3) in Definition~\ref{depthconditions}, 
\begin{align}\label{D}
\E|\mathrm I(x, V_1, F_{N_1})-\mathrm I(x, V_1, F_{\theta_1})|\lesssim \E\left(\sup_{x\in \R^d} |D(x, F_{N_1})-D(x, F_{\theta_1})|\right)\rightarrow 0.
\end{align}
Thus, by (\ref{upx}), $\lim_{N\rightarrow \infty}\E \lambda^{\uparrow}(x, \sG_D(\cV_N))=2 R(x, F_{\theta_1})$. The limit of scaled in-degree $\E \lambda^{\downarrow}(x, \sG_D(\cV_N))$ can be obtained similarly.

This implies 
\begin{align}\label{expupD}
&\frac{1}{N}\sum_{j=1}^{N}\E\langle h, \eta(V_j, \theta_1)\rangle\lambda^{\uparrow}(V_j, \sG_D(\cV_N))\nonumber\\
&=\E\langle h, \eta(V_1, \theta_1)\rangle\lambda^{\uparrow}(V_1, \sG_D(\cV_N))\nonumber\\
&=\int \langle h, \eta(x, \theta_1)\rangle\E \lambda^{\uparrow}(x, \sG_D(\cV_{N-1}))f(x|\theta_1)\mathrm dx\nonumber\\
&\rightarrow 2 \int \langle h, \grad f(x|\theta_1)\rangle R(x, F_{\theta_1}) \mathrm dx,
\end{align}
where the last step uses the Dominated Convergence Theorem, since  $\lambda^\uparrow(x, \sG_D(\cV_N))\leq 2$.

To show~\eqref{upD}, it suffices to show that the variance of the LHS of~\eqref{upD} goes to zero. This follows if 
\begin{align}\label{covup}
 & \frac{1}{N^2}\sum_{j_1\ne j_2}\Cov\left(\langle h, \eta(V_{j_1}, \theta_1)\rangle \lambda^{\uparrow}(V_{j_1}, \sG_D(\cV_N)), \langle h, \eta(V_{j_2}, \theta_1)\rangle \lambda^{\uparrow}(V_{j_2}, \sG_D(\cV_N))\right)\rightarrow 0. 
\end{align}

To this end, note that by (\ref{D}),
\begin{align}\label{expxyup}
\E & \pmb 1\{D(x, F_{N_1})\leq D(V_1, F_{N_1}), D(y, F_{N_1})\leq D(V_2, F_{N_1})\}\nonumber\\
=&\E \left( \mathrm I(x, V_1, F_{N_1}) \mathrm I(y, V_2, F_{N_1})\right)\nonumber\\
=&\E \left( \mathrm I(x, V_1, F_{\theta_1}) \mathrm I(y, V_2, F_{N_1})\right)+o(1)\nonumber\\
=&\E \left( \mathrm I(x, V_1, F_{\theta_1}) \mathrm I(y, V_2, F_{\theta_1})\right)+o(1)\nonumber\\
\rightarrow &\E \mathrm I(x, V_1, F_{\theta_1}) \E \mathrm I(y, V_2, F_{\theta_1}).
\end{align}
This implies that 
$$\Cov(\pmb 1\{D(x, F_{N_1})\leq D(V_1, F_{N_1})\}, \pmb 1\{ D(y, F_{N_1})\leq D(V_2, F_{N_1})\})\rightarrow  0,$$
and hence, $\Cov(d^{\uparrow}(x, \sG_D(\cV_N^{y})), d^{\uparrow}(y, \sG_D(\cV_N^{x})))
=O(N)$. Then (\ref{covup}) follows by the Dominated Convergence theorem, completing the proof of (\ref{upD}). The analogous result for the indgree (\ref{downD}) can be proved similarly.
\end{proof}

The proof of Theorem~\ref{POWERDEPTH} can be easily completed by substituting the values obtained above in  Theorem~\ref{ASYME}. From Observation~\ref{depthpower}, the denominator of the formula in Theorem~\ref{ASYME} is $\sqrt{r/6}$. From Lemma~\ref{eq:depthlimit}, the numerator is $-r \int \langle h, \grad  f(z|\theta_1)\rangle R(z, F_{\theta_1})\mathrm d z$. This implies that $\mathrm{AE}(\sG_D)=-\sqrt{6r} \int \langle h, 
\grad  f(z|\theta_1)\rangle R(z, F_{\theta_1})\mathrm d z$, and the result follows. \qed \\

We conclude by computing the efficiency of depth-based tests in some special cases.  

\begin{remark}\label{ex:depthlocation}(Location Family) As in Example~\ref{nlocationex}, consider parametric family $\P_{\theta}\sim N(\theta, \mathrm I)$, $\theta \in 
\R^d$. Then $R_{MD}(y, F_\theta)= R_{D}(y, F_\theta)$, where $F_{\theta}$ is the  distribution function of $\P_{\theta}$, for any depth function $D$ which is affine-invariant and the satisfies strict monotonicity property (refer to \cite[Theorem 5.2]{liu_singh} for details). Note that $R_{MD}(y, F_\theta)=\P_{F_{\theta}}(X: MD(X, F_{\theta}) \leq MD(y, F_{\theta}))=\P(\chi^2_d>  (y-\theta)^t(y-\theta))$. Then $$\int_{\R^d} \langle h, \grad f(x|\theta)\rangle R(x, F_\theta) \mathrm dx= -\frac{1}{(2\pi)^{d/2}}\int_{\R^d} \langle h, z e^{-\frac{z^t z}{2}}\rangle \P(\chi^2_d>  z^t z)\mathrm dz=0.$$ This implies that the asymptotic efficiency of any depth-based test is zero for the normal location problem, as seen in Example~\ref{nlocationex}. In fact, the same argument shows that depth-based tests are powerless against location alternatives for {\it elliptical} distributions \cite{symdistbook}. 
\end{remark}

On the other hand, the univariate depth function $D(x, F)=F(x)$ does not satisfy the strict monotonicity property. For this reason, the Mann-Whitney test for the normal location problem has non-zero efficiency, unlike the strictly monotonic depth functions.

\begin{remark}\label{mw} (Mann-Whitney $U$-Statistic) This is one of the most popular univariate two-sample tests, which corresponds to~\eqref{QFGsample} with $D(x, F)=F(x)$. This implies, $R(x, F)=1-F(x)$. Then for $\theta=\theta_1\in \Theta$, by Theorem~~\ref{POWERDEPTH}, the asymptotic efficiency of the Mann-Whitney test is $-\sqrt{6r} \cdot h \int \frac{\partial}{\partial \theta_1}f(x|\theta_1) R(x, F_{\theta_1}) \mathrm dx=-\sqrt{6r} \cdot h \int \frac{\partial}{\partial \theta_1}f(x|\theta_1) F_{\theta_1}(x) \mathrm dx= \sqrt{6r} \cdot h \int f^2(x|\theta_1) \mathrm dx$, where the last step uses integration by parts. 
\end{remark}

Another interesting exception is the zero efficiency of the test based on the halfspace depth in the lognormal location problem (see Figure \ref{lognormal}).

\begin{remark}\label{lognormal_halfspace} (Test Based on Halfspace Depth for Lognormal Location) It suffices to consider the 1-dimensional case (the result extends to the multivariate case, because $\exp(N(\theta, \mathrm I))$ is independent across coordinates). Let $\mu \in  \R$, and $F_\mu(t)=\Phi(\log t-\mu)$ be the distribution function and $f(t|\mu)=\frac{1}{t}\phi(\log t-\mu)$ the density of the lognormal $\exp(N(\mu, 1))$ (where $\Phi$ and $\phi$ denote the distribution function and density of the standard normal $N(0, 1)$, respectively). Next, note that the 1-dimensional halfspace-depth $HD(x, F_{\mu})=\min\{F_{\mu}(x), 1-F_{\mu}(x)\}$, which implies 
\begin{align}\label{eq:RHD}
R_{HD}(y, F_{\mu})&=2F_{\mu}(y) \bm 1\{F_{\mu}(y) \leq \frac{1}{2}\}+2(1-F_{\mu}(y)) \bm 1\{F_{\mu}(y) > \frac{1}{2}\} \nonumber \\
&=2F_{\mu}(y) \bm 1\{y \leq e^{\mu}\}+2(1-F_{\mu}(y)) \bm 1\{y >  e^{\mu}\}.
\end{align}
 Therefore, 
\begin{align}
\int_{0}^{e^\mu}\grad f(x|\mu)  F_{\mu}(x)  \mathrm dx& =-\int_{0}^{e^\mu} \frac{\log x-\mu}{x}\phi(\log x-\mu) \Phi(\log x-\mu)  \mathrm dx \nonumber \\
&=-\int_{-\infty}^0 z\phi(z) \Phi(z)  \mathrm dz \tag*{(substituting $z=\log x-\mu$)} \\
&=\int_0^\infty t\phi(t) (1-\Phi(t))  \mathrm dt \tag*{(substituting $t=-z$)} \\
&=-\int_{e^\mu}^\infty\grad f(x|\mu) (1- F_{\mu}(x))  \mathrm dx \nonumber.
\end{align}
Therefore, by \eqref{eq:RHD}, $\int_{\R}  \grad f(x|\mu)\rangle R_{HD}(x, F_\mu) \mathrm dx=0$,  which implies that the asymptotic efficiency of the test based on halfspace depth in the lognormal location problem is zero, as seen in the simulation in Figure \ref{lognormal}.
\end{remark}

\section{Proof of Theorem \ref{thm:frnew}}
\label{sec:pffrnew}

To begin with, define (similar to \eqref{wn})
\begin{align}\label{wnN}
W_N:=(\cR_1(\sG(\cZ_N)), \cR_2(\sG(\cZ_N)), \dot\ell_N, \overline B_N)^t,
\end{align}
where $\overline B_N$ is defined in \eqref{eq:BN} and $\dot\ell_N:=\dot L_N-\E(\dot L_N|\cF)$. Let $\Sigma_N$ be the variance-covariance matrix of $W_N|\mathcal F$ under the bootstrap distribution.

\begin{ppn}\label{jointNnew} Let $\sG$ be a undirected graph functional such that the pair $(\sG, \P_{\theta_1})$ satisfies the variance condition~\eqref{gamma0gamma1} with  parameters $(\gamma_0, \gamma_1)$ and the covariance condition~\eqref{covundirected}. Moreover, assume that under the bootstrap distribution 
\begin{equation}
|\P(\Sigma_N^{-\frac{1}{2}}W_N\leq x|\cF) -\Phi_4(x)|\pto 0,
\label{NORMALASSUMPTIONN}
\end{equation}
for all $x \in \R^d$. Then under the alternative $H_1$,  
\begin{equation}\label{eq:jointR12}
\Lambda_N^{-\frac{1}{2}}\begin{pmatrix}
\cR_1(\sG(\cZ_N)) \\ 
\cR_2(\sG(\cZ_N))
\end{pmatrix}\dto N\left( 
\Lambda^{-\frac{1}{2}}\begin{pmatrix}
\mu_1 \\
\mu_2
\end{pmatrix},  
\mathrm I \right),
\end{equation}
where $\mu=(\mu_1, \mu_2)^t$ and $\Lambda$ are as defined in Theorem \ref{thm:frnew}.  
\end{ppn}

Given the normality condition in Assumption \ref{varundirectedcond}, it is easy to verify \eqref{NORMALASSUMPTIONN} above, by arguments similar to those in Appendix \ref{pfN1} and Appendix \ref{pfN2}. Therefore, Theorem \ref{thm:frnew} is an immediate consequence of the above proposition, which is proved below.

\subsection{Proof of Proposition \ref{jointNnew}} As in the proof of Proposition \ref{jointNnew} we begin by computing the the limit of $\Sigma_N$ (the variance-covariance matrix of $W_N|\mathcal F$). 

\begin{lem}\label{lm:var_limit_N} Under the assumptions of Proposition \ref{jointNnew}, $\Sigma_N\pto  \Sigma_0:=((\sigma_{ij}))_{i, j \in [4]},$ 
where $\sigma_{ij}=\sigma_{ji}$ and 
\begin{itemize}
\item[(1)] $\sigma_{11}:=p^2((1-p^2) \gamma_0  + r \gamma_1 )$, $\sigma_{22}:=q^2 ((1-q^2) \gamma_0  + r \gamma_1 )$, $\sigma_{12}:=-p^2 q^2 ( \gamma_0  + 2 \gamma_1 )$;  

\item[(2)] $\sigma_{13}:=-p^2q \int \langle h, \grad  f(z|\theta_1)\rangle \lambda(z)\mathrm d z$,  $\sigma_{23}= pq^2 \int \langle h, \grad  f(z|\theta_1)\rangle \lambda(z)\mathrm d z$;

\item[(3)] $\sigma_{14}:= 2p^2q$, $\sigma_{24}:=-2pq^2$;

\item[(4)] $\sigma_{33}:= q(1-q)\langle h, \mathrm I(\theta)h\rangle$, $\sigma_{44}=p(1-p)$, and $\sigma_{34}=0$.
\end{itemize}
\end{lem}

\begin{proof} To begin with recall that (4) follows from the proof from Lemma \ref{joint}. It remains to prove (1)-(3). \\

\noindent{\it Proof of }(1): Recall~\eqref{R12}. Then, for $j \in \{1, 2\}$, 
\begin{eqnarray}
\cR_j(\sG(\cZ_N))&=&\sqrt N\left(T_j(\sG(\cZ_N))-\frac{N_j^2}{N^2}\right)+o(1),
\label{2sampleG2b}
\end{eqnarray}
where $T_j(\sG(\cZ_N))$ is defined in~\eqref{T12}. Under the bootstrap distribution, the labels of the vertices are independent, and so,
\begin{align}
\Var(\cR_j(\sG(\cZ_N))|\cF) 
=&\frac{N a_j}{|E(\sG(\cZ_N))|}+b_j\frac{2NT_2(\sG(\cZ_N))}{|E(\sG(\cZ_N))|^2},
\label{varRcalculation12}
\end{align}
where 
$$a_j=\frac{N_j^2}{N^2}-\left(\frac{N_j}{N}\right)^4, \quad b_j=\frac{N_j^3}{N^3}-\left(\frac{N_j}{N}\right)^4.$$
Now, since $(\sG, \P_{\theta_1})$ satisfies the variance condition with  parameters $(\gamma_0, \gamma_1)$, \eqref{varRcalculation12} implies $\Var(\cR_1(\sG(\cZ_N))|\cF) \pto p^2((1-p^2) \gamma_0  + r \gamma_1 )=\sigma_{11}$, and  $\Var(\cR_2(\sG(\cZ_N))|\cF) \pto q^2((1-q^2) \gamma_0  + r \gamma_1 )=\sigma_{22}$, as required. Similarly, 
\begin{align}
\Cov(\cR_1(\sG(\cZ_N)), \cR_2(\sG(\cZ_N))|\cF)=&\frac{N a}{|E(\sG(\cZ_N))|}+a\frac{2NT_2(\sG(\cZ_N))}{|E(\sG(\cZ_N))|^2},
\label{corRcalculation}
\end{align}
where $a=-\frac{N_1^2N_2^2}{N^4}$. Therefore, $\Cov(\cR_1(\sG(\cZ_N)), \cR_2(\sG(\cZ_N))|\cF) \pto -p^2 q^2 ( \gamma_0  + 2 \gamma_1 )$. \\

\noindent{\it Proof of }(2): We begin by computing $\Cov( \cR_1(\sG(\cZ_N)), \dot L_N|\mathcal F)$. Let $\nu_{N_1}=\frac{N_1^2}{N^2}$. For $j \in [N]$, define 
\begin{align}
T_{1j}=&\frac{\sqrt N}{|E(\sG(\cZ_N))|}\sum_{i=1, i\ne j} ^{N}\left(\psi_1(c_i, c_j)-\nu_{N_1} \right)\pmb 1\{(Z_i, Z_j)\in E(\sG(\cZ_N))\}\nonumber\\
=&T_{1j}^{(1)}-T_{1j}^{(2)},
\label{Qjn}
\end{align}
where $$T_{1j}^{(1)}=\frac{\sqrt N}{|E(\sG(\cZ_N))|} \sum_{i=1, i\ne j} ^{N}\psi_1(c_i, c_j) \pmb 1\{(Z_i, Z_j)\in E(\sG(\cZ_N)) \},$$ and $$T_{1j}^{(2)}:=\frac{\sqrt N}{|E(\sG(\cZ_N))|} \nu_{N_1} d(Z_j, \sG(\cZ_N)),$$ where $d(Z_j, \sG(\cZ_N))$ is the degree of the vertex $Z_j$ in the graph $\sG(\cZ_N)$. Note that $\cR_1(\sG(\cZ_N))=\frac{1}{2}\sum_{j=1}^N T_{1j}$, and with $\dot L_N=\frac{1}{\sqrt N}\sum_{i=1}^N\langle h, \eta(Z_i, \theta_1)\rangle\pmb 1 \{c_i=2\}$ as defined in~\eqref{etaN}
\begin{eqnarray}
\cR(\sG(\cZ_N)) \dot L_{N}=\frac{1}{2}\Big(\Gamma_1-\Gamma_2+\Gamma_3-\Gamma_4\Big),
\label{eq:cov_termsn}
\end{eqnarray}
where 
\begin{align}
\Gamma_1:=&\frac{1}{\sqrt N}\sum_{j=1}^N T_{1j}^{(1)}\langle h, \eta(Z_j, \theta_1)\rangle  \pmb 1\{c_{j}=2\}, \nonumber\\
\Gamma_2:=&\frac{1}{\sqrt N}\sum_{j=1}^N T_{1j}^{(2)}\langle h, \eta(Z_j, \theta_1)\rangle  \pmb 1\{c_{j}=2\},\nonumber\\
\Gamma_3:=&\frac{1}{\sqrt N}\sum_{1\leq j\ne k\leq N}T_{1j}^{(1)} \langle h, \eta(Z_k, \theta_1)\rangle\pmb 1\{c_{k}=2\},\nonumber\\
\Gamma_4:=&\frac{1}{\sqrt N}\sum_{1\leq j\ne k\leq N}T_{1j}^{(2)} \langle h, \eta(Z_k, \theta_1)\rangle\pmb 1\{c_{k}=2\}.\nonumber
\end{align}
Note that $\Gamma_1=0$. Now, as in \eqref{eq:cov_21}, 
\begin{eqnarray}
\E(\Gamma_2|\cF)=\nu_{N_1}\frac{N_2}{N}\cdot \frac{1}{N}\sum_{j=1}^{N} \langle h, \eta(Z_j, \theta_1)\rangle \lambda(Z_j, \sG(\cZ_N)).
\label{eq:cov_21n}
\end{eqnarray} 
Similarly, as in \eqref{eq:cov_22} and \eqref{eq:cov_24}, we get 
\begin{align}
\E(\Gamma_4|\cF)=\nu_{N_1}\frac{N_2}{N}\left\{\sum_{j=1}^N \langle h, \eta(Z_j, \theta_1)\rangle -\frac{1}{N}\sum_{j=1}^N \langle h, \eta(Z_j, \theta_1)\rangle \lambda(Z_j,\sG(\cZ_N)) \right\},
\label{eq:cov_22n}
\end{align}
and 
\begin{align}
\E(\Gamma_3|\cF)&=\nu_{N_1}\frac{N_2}{N} \left\{ \sum_{j=1}^N \langle h, \eta(Z_j, \theta_1)\rangle  -\frac{2}{N}\sum_{i=1}^N\langle h, \eta(Z_i, \theta_1)\rangle\lambda(Z_i, \sG(\cZ_N))\right\}. \label{eq:cov_24n}
\end{align}
Combining (\ref{eq:cov_22n}), and (\ref{eq:cov_24n}) using \eqref{eq:cov_termsn}, and using the covariance condition \eqref{covundirected} gives $\Cov( \cR_1(\sG(\cZ_N)), \dot L_N|\mathcal F) \pto -p^2q \int \langle h, \grad  f(z|\theta_1)\rangle \lambda(z)\mathrm d z=\sigma_{13}$, as required. The limit of  $\Cov( \cR_1(\sG(\cZ_N)), \dot L_N|\mathcal F)$ can be computed similarly. \\

\noindent{\it Proof of }(3):  Finally, note that 
\begin{align}
\Cov(\mathcal  R_1(\sG(\cZ_N)),\overline B_{N}|\cF)=S_{1}+S_{2}-\frac{N_1^3}{N^2},\label{sigma13n}
\end{align}
where $S_{1}=\frac{1}{2|E(\sG(Z_N))|} \E\left(\sum_{i\ne j}\pmb 1\{(Z_i, Z_j)\in E(\sG(\cZ_N))\} \psi_1(c_i, c_j)\Big| \cF\right)=\frac{N_1^2}{N^2}$ and
\begin{align*}
S_{2}=&\frac{1}{2|E(\sG(Z_N))|} \E\sum_{i_1\ne i_2,j\ne i_1} \pmb 1\{(Z_{i_1}, Z_j)\in E(\sG(\cZ_N))\} 1\{c_{i_1}=1, c_{i_2}=1, c_j=1\}\nonumber\\
=&(N-2)\frac{N_1^3}{N^3}+\frac{N_1^2}{N^2}.
\end{align*}
Substituting the expressions for $S_{1}$ and $S_{2}$ in~\eqref{sigma13n} gives $\Cov(\mathcal  R_1(\sG(\cZ_N)),\overline B_{N}|\cF)\pto 2p^2q=\sigma_{14}$. Similarly, it can be calculated that $\Cov(\mathcal  R_2(\sG(\cZ_N)),\overline B_{N}|\cF)\pto -2pq^2=\sigma_{24}$. 
\end{proof}

If $W_N^1=(\cR_1(\sG(\cZ_N)), \cR_2(\sG(\cZ_N)), \dot\ell_N)^t$, then the distribution of $W_N^1$ conditional on $\{\overline B_N=0\}$ and $\cF$ converges to a $N(0, \Sigma_1)$, where 
\begin{equation*}
\Sigma_1:=\left(\begin{array}{ccc}
\sigma_{11}-\frac{\sigma^2_{14}}{\sigma_{44}}    &  \sigma_{12}-\frac{\sigma_{14}\sigma_{24}}{\sigma_{44}}  &  \sigma_{13}\\
\sigma_{12}-\frac{\sigma_{14}\sigma_{24}}{\sigma_{44}}  &   \sigma_{22}^2 -\frac{\sigma^2_{24}}{\sigma_{44}} & \sigma_{23} \\
\sigma_{13} &  \sigma_{23} & \sigma_{33}
\end{array}
\right)=\left(\begin{array}{ccc}
\lambda_{11}  &  \lambda_{12}  &   \mu_1 \\
\lambda_{12}  &   \lambda_{22} & \mu_2 \\
\mu_1 & \mu_2 & q(1-q)\langle h, \mathrm I(\theta)h\rangle
\end{array}
\right),
\end{equation*}
where the last step uses Lemma \ref{lm:var_limit_N} and the definitions of $\mu$ and $\Lambda$ from Theorem \ref{thm:frnew}.

Finally, note that $\dot L_N=\dot\ell_N+\hat r_N$, where $\hat r_N:=\frac{N_2}{N}\cdot \frac{1}{\sqrt N}\sum_{i=1}^N \langle h, \eta(Z_i, \theta)\rangle$. By the central limit theorem,  $\hat r_N\dto V\sim N(0, q^2 \langle h, \mathrm I(\theta_1) h\rangle)$. Therefore, under the bootstrap distribution, $U_N:=(\cR_1(\sG(\cZ_N)), \cR_2(\sG(\cZ_N)), \dot L_N)^t \dto N(0, \Sigma)$, where 
\begin{equation}\label{eq:Sigma_N}
\Sigma:=\left(\begin{array}{ccc}
\lambda_{11}  &  \lambda_{12}  &   \mu_1 \\
\lambda_{12}  &   \lambda_{22} & \mu_2 \\
\mu_1 & \mu_2 & q\langle h, \mathrm I(\theta)h\rangle
\end{array}
\right).
\end{equation}

Now, let $\cR_N:=(\cR_1(\sG(\cZ_N)), \cR_2(\sG(\cZ_N)))^t$, $\bar \Lambda_N=\diag(\Lambda_N^{-\frac{1}{2}}, 1)$ be a $3 \times 3$ matrix (recall that $\Lambda_N$ is the variance covariance matrix of $\cR_N|\mathcal F$). It is easy to check, from the expression of $\Lambda_N$ in \cite[Lemma 2.1]{chenfriedman} and the proof of Proposition \ref{lm:var_limit_N}, that $\bar \Lambda_N  \pto \bar \Lambda:=\diag(\Lambda^{-\frac{1}{2}}, 1)$ and by the Slutsky's theorem, $\overline\Lambda_N U_N \dto N(0, \overline\Lambda\Sigma \overline\Lambda^t)$, where $\Sigma$ is defined in \eqref{eq:Sigma_N}. Therefore, 
$$\overline\Lambda_N U_N = \begin{pmatrix}
\Lambda_N^{-\frac{1}{2}} \cR_N \\
\dot L_N
\end{pmatrix} \dto N\left(0, \left(\begin{array}{ccc}
\mathrm I  & \Lambda^{-\frac{1}{2}}\mu \\
\mu^t(\Lambda^{-\frac{1}{2}})^t& q\langle h, \mathrm I(\theta)h\rangle
\end{array}
\right)\right),
$$
and by the LeCam's Third Lemma \cite[Corollary 12.3.2]{lr}, \eqref{eq:jointR12} follows. 

\section{Additional Simulations}
\label{sec:knnexperiments}

This section contains additional simulations, showing how the power of the $K$-NN test depends on $K$.

%
\begin{figure*}[h]
\centering
\begin{minipage}[c]{0.495\textwidth}
\centering
\includegraphics[width=2.2in]
    {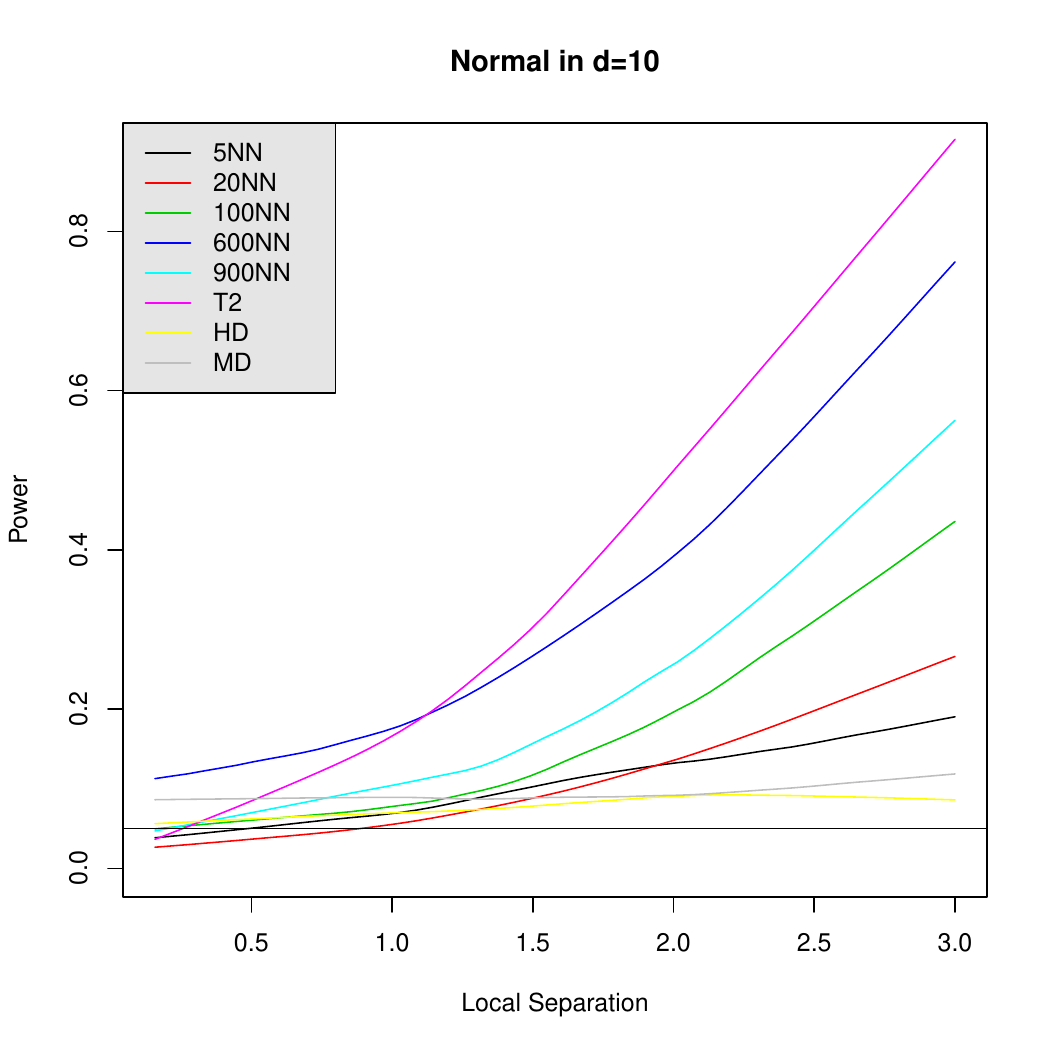}\\
\small{(a)}
\end{minipage}
\begin{minipage}[c]{0.495\textwidth}
\centering
\includegraphics[width=2.2in]
    {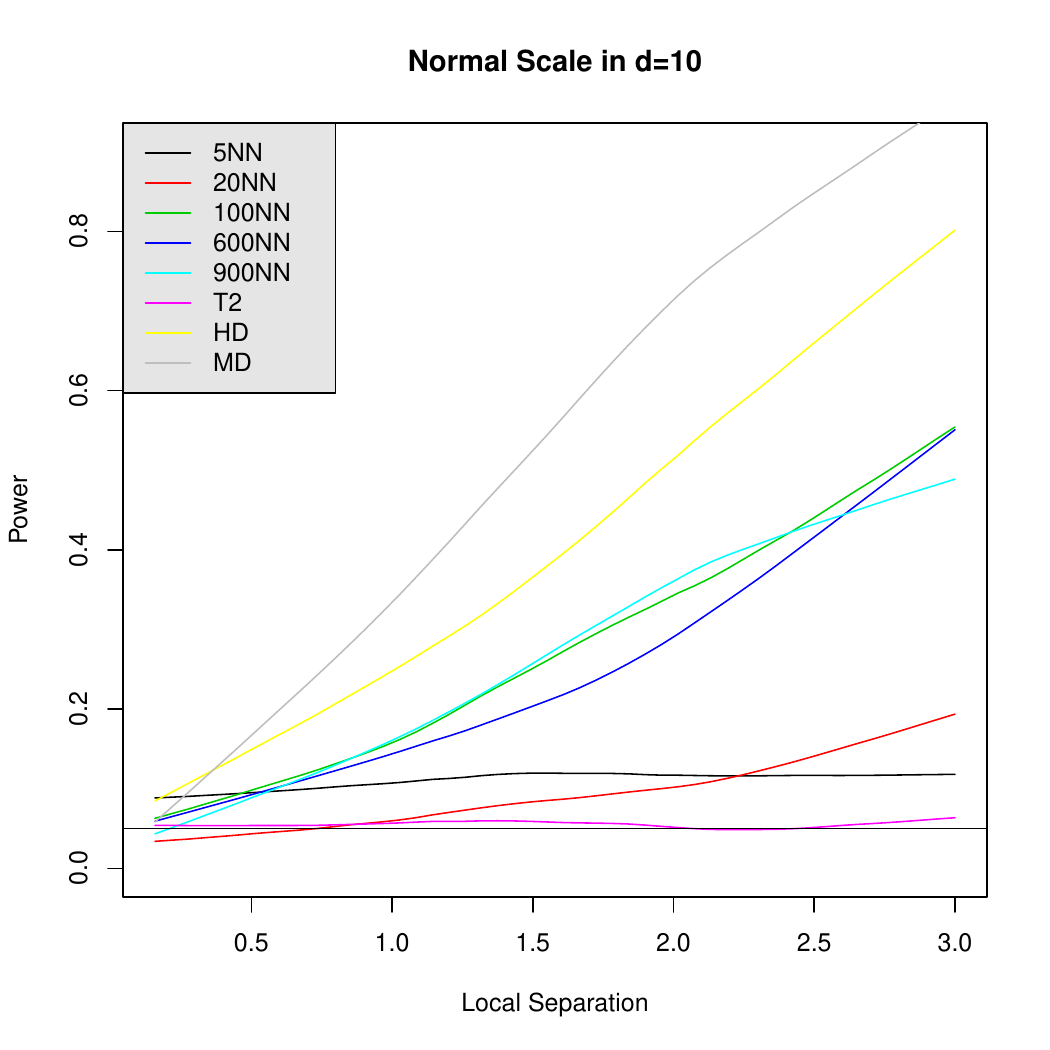}\\
\small{(b)}
\end{minipage}
\caption{\small{(a) Power in the normal location family in dimension 10 where the means differ by $\delta \cdot \bm 1/\sqrt N$, as a function of $\delta$. (b) Power in the spherical normal family in dimension 10 where the standard deviations differ by $\delta/\sqrt N$, as a function of $\delta$.}} 
\label{lognormal_nn}
\end{figure*}

\begin{itemize}

\item[(a)] $\P_{\theta}\sim N(\theta, \mathrm I)$, for $\theta \in \R^d$: Figure \ref{lognormal_nn}(a) shows the empirical power (out of 100 repetitions) of the $K$-NN test for various values of $K$. It also shows the power of the tests based on halfspace depth (HD) and the Mahalanobis depth (MD), and the Hotelling's $T^2$ test,  based on $N_1=1000$ samples from $\P_0$, and $N_2=800$  samples from $\P_{\frac{\delta  \bm 1}{\sqrt N}}$, over a grid of 20 values of $\delta$ in $[0, 3]$. (Note that $N=N_1+N_2=1800$.) As in the lognormal example (Example \ref{knnexample}), the power of the $K$-NN test generally increases with $K$, with the highest power attained when $K=\frac{N}{3}=600$, where it is comparable with the parametric Hotelling's $T^2$ test. Both the HD and the MD tests have low power in this case (as discussed in Remark \ref{ex:depthlocation}).

\item[(b)] $\P_{\sigma}\sim N(0, \sigma^2\mathrm I)$, for $\sigma > 0$:  Figure \ref{lognormal_nn}(b) shows the empirical power (out of 100 repetitions) of the different tests based on $N_1=1000$ samples from $\P_{0}$ and  $N_2=800$  samples from $\P_{1+ \frac{\delta}{\sqrt N}}$, over a grid of 20 values of $\delta$ in $[0, 3]$. Again, the power of the $K$-NN test increases with $K$, stabilizing around $K=100$. Both the HD and the MD tests have good power in this case. Also, as expected, the Hotelling's $T^2$ test has no power in this case. 

\end{itemize}


\begin{thebibliography}{9}
\bibitem{aldous_steele}
D. Aldous  and J. M. Steele, Asymptotics for Euclidean minimal spanning trees on random points, {\it Probab. Theory Related Fields}, Vol. 92, 247--258, 1992.
\bibitem{akhd}
E. Amaldi and V. Kann, The complexity and approximability of  finding maximum feasible subsystems of linear relations, {\it Theoretical Computer Science}, Vol. 147, (1-2), 181--210, 1995.
\bibitem{cm_consistency}
E. Arias-Castro and B. Pelletier, On the consistency of the crossmatch test, {\it Journal of Statistical Planning and Inference}, 184--190, Vol. 171, 2016.  
\bibitem{aslan_zech}
B. Aslan and G. Zech, New test for the multivariate two-sample problem based on the concept of minimum energy, {\it Journal of Statistical Computation and Simulation}, Vol. 75, 109--119, 2005.
\bibitem{bfranz}
L. Baringhaus, and C. Franz, On a new multivariate two-sample test, {\it Journal of Multivariate Analysis}, Vol. 88, 190--206,  2004.
\bibitem{BBBG2}
B. B. Bhattacharya, Asymptotic distribution and detection thresholds for two-sample tests based on geometric graphs, {\tt arXiv:1512.00384}, 2018. 
\bibitem{BDM}
B. B. Bhattacharya,  P. Diaconis, and S. Mukherjee, Universal Poisson and Normal limit theorems in graph coloring problems with connections to extremal combinatorics, {\it Annals of Applied Probability}, Vol. 27 (1), 337--394, 2017. 
\bibitem{sensordriving}
C. Bayer, O. Enge-Rosenblatt, M. Bator, and U. M\"onks, Sensorless drive diagnosis using automated feature extraction, significance ranking and reduction, {\it 18th Conference on Emerging Technologies and  Factory Automation (ETFA)}, 1--4, 2013.
\bibitem{bickel}
P. J. Bickel, A distribution free version of the Smirnov two sample test in the $p$-variate case, {\it Annals of Mathematical Statistics}, Vol. 40, 1--23, 1969.
\bibitem{tsp}
M. Biswas, M. Mukhopadhyay, and A. K. Ghosh, A distribution-free two-sample run test applicable to high-dimensional data, {\it Biometrika}, Vol. 101(4), 913--926, 2014. 
\bibitem{czchangepoint}
H. Chen and N. R. Zhang, Graph-based change-point detection, {\it The Annals of Statistics}, Vol. 43 (1), 139--176, 2015.
\bibitem{chenfriedman}
H. Chen and J. H. Friedman,  A new graph-based two-sample test for multivariate and object data, {\it J. Amer. Statist. Assoc.} Vol. 112 (517), 397--409, 2017.
\bibitem{hcII} 
H. Chen, X. Chen, X. and Y. Su, A weighted edge-count two sample test for multivariate and object data, {\it JASA, Theory and Methods},  Vol. 113 (523), 1146--1155, 2018. 
\bibitem{stein_local}
L. H. Y. Chen and Q.-M. Shao, Normal approximation under local dependence, {\it Annals of Probability}, Vol. 32 (3), 1985--2028, 2004.
\bibitem{twosamplelinear}
K. Chwialkowski, A. Ramdas, D. Sejdinovic, and A. Gretton, Fast two-sample testing with analytic representations of probability measures, {\it Neural Information Processing Systems (NIPS)}, 1972--1980, 2015. 
\bibitem{esineq}
B. Efron and C. Stein, The jackknife estimate of variance, {\it  Annals of Statistics}, Vol. 9, 586--596, 1981.
\bibitem{symdistbook}
K. Fang, S. Kotz, and K. Ng, {\it Symmetric Multivariate and Related Distributions}, London: Chapman \& Hall, 1990.
\bibitem{fr}
J. H. Friedman and L. C. Rafsky,  Multivariate generalizations of the Wolfowitz and Smirnov two-sample tests, {\it Ann. Statist.}, Vol. 7, 697--717, 1979.
\bibitem{elnp}
E. L. Lehmann, {\it Nonparametrics: statistical methods based on ranks}, With the special assistance of H. J. M. d'Abrera. Holden-Day Series in Probability and Statistics. Holden-Day, Inc., San Francisco, Calif.; McGraw-Hill International Book Co., New York-Dusseldorf, 1975. 
\bibitem{randomized_nn}
G. C. Linderman, G. Mishne, Y. Kluger, and  S. Steinerberger,  Randomized near neighbor graphs, giant components, and applications in data science, arXiv:1711.04712, 2017. 
\bibitem{gretton}
A. Gretton, K. Borgwardt, M. Rasch, B. Scholkopf, and A. Smola, A kernel two-sample test, {\it Journal of Machine Learning Research}, Vol. 16, 723--773, 2012.
\bibitem{hall_tajvidi}
P. Hall and N. Tajvidi, Permutation tests for equality of distributions in high-dimensional settings, {\it Biometrika}, Vol. 89, 359--374, 2002.
\bibitem{henzenn}
N.  Henze, A multivariate two-sample test based on the number of nearest neighbor  type coincidences,  {\it Ann. Statist.}, Vol. 16, 772--783, 1988.
\bibitem{henzepenrose}
N. Henze and M. D. Penrose, On the multivariate runs test,  {\it The Annals of Statistics}, Vol. 27 (1), 290--298, 1999. 
\bibitem{jp}
D. S. Johnson and F. P. Preparata, The densest hemisphere problem, {\it Theoretical Computer Science}, Vol. 6 (1), 93--107, 1978. 
\bibitem{lr}
E. L. Lehmann and J. Romano, {\it Testing Statistical Hypotheses}, 3rd edition, Springer-Verlag, New York, 2005.
\bibitem{liu} 
R. Y. Liu,  On a notion of data-depth-based on random simplices, {\it Ann. Statist.}, Vol. 18, 405--414, 1990. 
\bibitem{liu_datadepth}
R. Y. Liu, Data depth and multivariate rank tests, {\it In L1-Statistical Analysis and Related Methods} (Y. Dodge, ed.), 279--294, North-Holland, Amsterdam,  1992. 
\bibitem{liu_singh}
R. Y. Liu and K. Singh,  A quality index based on data-depth and multivariate rank tests, {\it J. Amer. Statist. Assoc.}, Vol. 88, 252--260, 1993.
\bibitem{interpoint}
J.-F. Maa, D. K. Pearl, and R. Bartoszy\' nski, Reducing multidimensional two-sample data to one-dimensional interpoint comparisons, {\it The Annals of Statistics}, Vol. 24 (3), 1069--1074, 1996.
\bibitem{mann_whitney}
H. B. Mann and D. R. Whitney, On a test of whether one of two random variables is stochastically larger than the other, {\it Annals of Mathematical Statistics}, Vol. 18(1), 50--60, 1947. 
\bibitem{moodefficiency}
A. M. Mood, On the asymptotic efficiency of certain nonparametric two-sample tests, {\it The Annals of Mathematical Statistics}, Vol. 25 (3), 514--522, 1954.
\bibitem{ojaR}
H. Oja, {\it Multivariate Nonparametric Methods with R: An Approach Based on Spatial Signs and Ranks}, Lecture Notes in Statistics 199, New York, Springer, 2010.
\bibitem{comboptbook}
C. H. Papadimitriou and K. Steiglitz, {\it Combinatorial Optimization: Algorithms and Complexity}, Englewood Cliffs, Prentice Hall, 1982. 
\bibitem{pyclt}
M. D. Penrose and J. E. Yukich, Central limit theorems for some graphs in computational geometry, {\it Ann. Appl. Probab.}, Vol. 11, 1005--1041, 2001.
\bibitem{py}
M. D. Penrose and J. E. Yukich, Weak laws of large numbers in geometric probability, {\it Ann. Appl. Probab.} Vol. 13, 277--303, 2003.
\bibitem{rr_stein} 
G. Reinert and A. R\"ollin, Multivariate normal approximation with Stein's method of exchangeable pairs under a general linearity condition, {\it The Annals of Probability}, Vol. 37 (6), 2150--2173, 2009.
\bibitem{depthsurvey}
P. J. Rousseeuw and M. Hubert, Statistical depth meets computational geometry: a short survey,  arXiv:1508.03828, 2015. 
\bibitem{rousson}
V. Rousson, On distribution-free tests for the multivariate two-sample location-scale model, {\it J. Mult. Anal.}, Vol.  80, 43--57, 2002.
\bibitem{rosenbaum}
P. R. Rosenbaum, An exact distribution-free test comparing two multivariate distributions based on adjacency, {\it Journal of the Royal Statistical Society: Series B (Statistical Methodology)}, Vol. 67 (4), 515--530, 2005. 
\bibitem{schilling}
M. F. Schilling, Multivariate two-sample tests based on nearest neighbors, {\it J. Amer. Statist. Assoc.}, Vol. 81,  799--806, 1986. 
\bibitem{steeleshepp}
J. M. Steele, L. A. Shepp, and W. F. Eddy,  On the number of leaves of a Euclidean minimal spanning tree, {\it J. Appl. Prob.}, Vol. 24, 809--826, 1987.
\bibitem{tukey}
J. W. Tukey, Mathematics and picturing data, {\it In Proc. Intern. Congr. Math. Vancouver 1974}, Vol. 2, 523--531, 1975.
\bibitem{vandervaart}
A. W. Van der Vaart, {\it Asymptotic Statistics}, Cambridge University Press, 2000.
\bibitem{ww}
A. Wald and J. Wolfowitz, On a test whether two samples are from the same distribution, {\it Ann. Math. Statist.}, Vol. 11, 147--162, 1940.
\bibitem{weiss}
L. Weiss, Two-sample tests for multivariate distributions, {\it The Annals of Mathematical Statistics}, Vol. 31, 159--164, 1960.
\bibitem{zuolimitdist}
Y. Zuo and X. He, On the limiting distributions of multivariate depth-based rank sum statistics and related tests, {\it Ann. Statist.}, Vol. 34, 2879--2896, 2006.
\end{thebibliography}
\end{document}